\newcommand{\forces}{\vDash}
\newcommand{\G}{\Gamma}
\newcommand{\inv}{^{-1}}
\newcommand{\nlessdot}{\not\hspace{-4pt}\lessdot}
\newcommand{\ngtrdot}{\not\hspace{-2.75pt}\gtrdot}
\theoremstyle{definition}
\newtheorem{example}{Example}
\newtheorem{defn}{Definition}
\theoremstyle{theorem}
\newtheorem{lemma}{Lemma}
\newtheorem{prop}[lemma]{Proposition}
\newtheorem{theorem}[lemma]{Theorem}
\newtheorem{cor}[lemma]{Corollary}
\title{Universal Labeling Algebras as Invariants of Layered Graphs} 
\author{Susan Durst} 
\date{\today}
\begin{document} 

\begin{abstract}
In this work we will study the universal labeling algebra $A(\Gamma)$, a related algebra $B(\Gamma)$, and their behavior as invariants of layered graphs. We will introduce the notion of an upper vertex-like basis, which allows us to recover structural information about the graph $\Gamma$ from the algebra $B(\Gamma)$. We will use these bases to show that several classes of layered graphs are uniquely identified by their corresponding algebras $B(\Gamma)$.
\end{abstract}

\maketitle

\section{Introduction}
\label{chap:Introduction}

A directed graph $\Gamma=(V,E)$ consists of a set $V$ of vertices, together with a set $E$ of ordered pairs of elements of $V$ called edges.  Given an edge $e=(v,w)$, we call $v$ the ``tail'' of $e$ and write $v=t(e)$, and we call $w$ the ``head'' of $e$ and write $w=h(e)$.  A \textit{path} is a sequence of edges $(e_1,e_2,\ldots,e_n)$ satisfying $h(e_i)=t(e_{i+1})$ for all $1\leq i<n$.  A \textit{layered graph} is a directed graph $\Gamma=(V,E)$ whose vertex set $V$ is divided into a sequence of layers $V_0, V_1, V_2,V_3,\ldots$ such that each directed edge in $E$ travels exactly one layer down.

Let $\Gamma=(V,E)$ be a layered graph, let $A$ be an algebra over some field $\mathbb{F}$, and let $f$ be a map from $E$ to $A$.  To each path $\pi=(e_1,\ldots,e_n)$ in $\Gamma$, we associate a polynomial $p_\pi=(t-f(e_1))(t-f(e_2))\ldots(t-f(e_n))$ in $A[t]$.  We say that the ordered pair $(A,f)$ is a $\Gamma$-\textit{labeling} of $\Gamma$ if $p_{\pi_1}=p_{\pi_2}$ for any paths $\pi_1$ and $\pi_2$ with the same starting and ending vertices.  Each graph $\Gamma$ has a universal $\Gamma$-labeling, given by $(A(\Gamma),f_\Gamma)$.  The algebra $A(\Gamma)$ is called the \textit{universal labeling algebra} for the graph $\Gamma$.

These algebras arose from the constructions occurring in the proof of Gelfand and Retakh's Vieta theorem~\cite{Vieta} for polynomials over a noncommutative division ring. There they show how to write such a polynomial in a central variable $t$ with a specified set of roots $\{x_i \mid 1\leq i\leq n\}$ in the form
\[f(t)=(t-y_1)(t-y_2)\ldots(t-y_n).\]
The expressions for the $y_i$ depend on the ordering of the roots, and lead to labelings of the Boolean lattice.

Universal labeling algebras induce an equivalence relation $\sim_A$ on the collection of layered graphs, given by $\Gamma\sim_A\Gamma^\prime$ if and only if $A(\Gamma)\cong A(\Gamma^\prime)$.  We would like to explore the equivalence classes of this relation.  In order to do so, we will work with a related algebra $B(\Gamma)$, generated by the vertices with nonzero out-degree, and with relations generated by
\[\left\{vw: (v,w)\notin E\right\}\cup\left\{v\sum_{vw\in E}w\right\}\]
In the case where $\Gamma$ is a \textit{uniform} layered graph as defined in Section~\ref{section:Uniform}, $B(\Gamma)$ can be calculated directly from $A(\Gamma)$, and thus gives us a coarser partition of the layered graphs. 
In Sections 2-\ref{section:Uniform}, we will give a full development of the relationship between the algebras $A(\Gamma)$ and $B(\Gamma)$.

The $\sim_B$ equivalence class of a particular layered graph is often nontrivial.  For example, consider the pair of graphs in the figure below:

\begin{figure}[H]
\caption{The graphs $\Gamma$ and $\Gamma^\prime$ are nonisomorphic, but we have $B(\Gamma)\cong B(\Gamma^\prime)$.}
\begin{center}
\includegraphics[scale=.15]{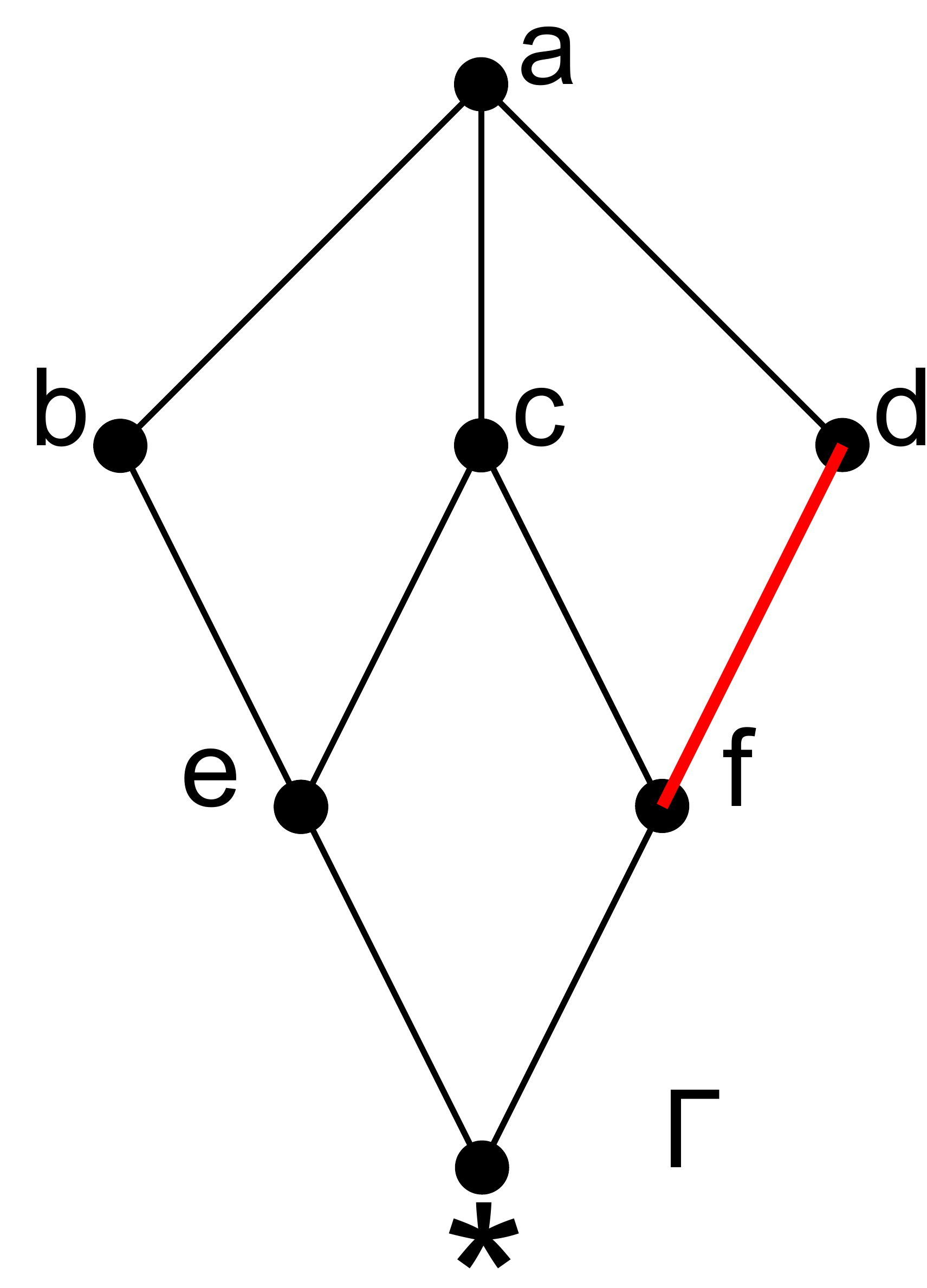}
\hspace{24pt}
\includegraphics[scale=.15]{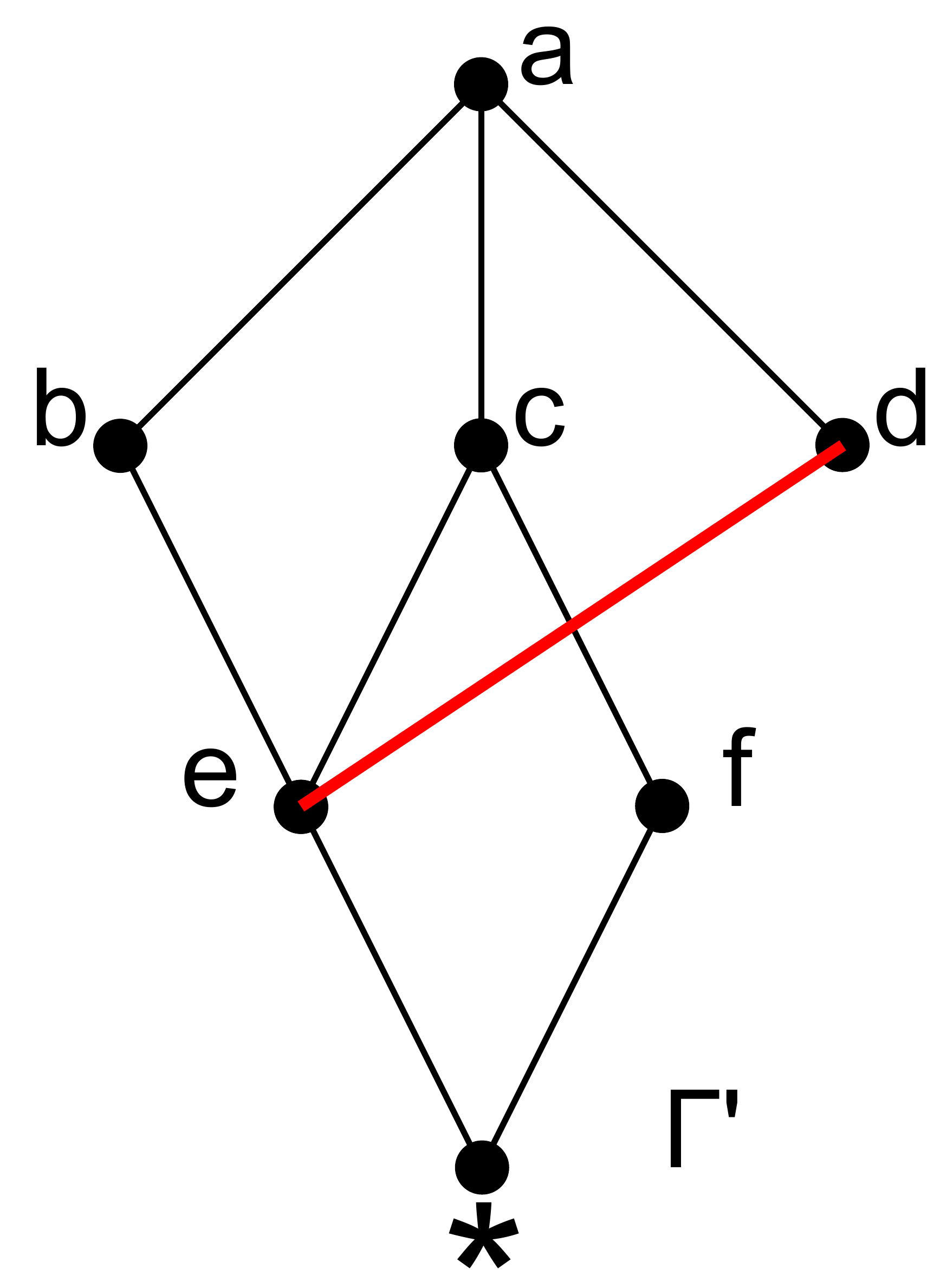}
\end{center}
\end{figure}

These graphs are nonisomorphic, but the only difference is the single edge $(d,f)$ in $\Gamma$, replaced by the edge $(d,e)$ in $\Gamma^\prime$.  This replacement has no effect on the defining relations for $B(\Gamma)$, and thus we have $\Gamma\sim_B\Gamma^\prime$. In Sections \ref{chap:IsosOfB}-\ref{chap:UniquenessResults} we will explore the structure of $B(\Gamma)$, discuss when two graphs $\Gamma$ and $\Gamma^\prime$ satisfy $\Gamma\sim_B\Gamma^\prime$, and give examples of several classes of uniform layered graphs which have $\sim_B$, and thus $\sim_A$ equivalence classes consisting of a single isomorphism class of layered graphs.  These include complete layered graphs, Boolean lattices, and lattices of subspaces of finite-dimensional vector spaces over finite fields.

\section{Preliminaries}

\subsection{Ranked Posets and Layered Graphs}

Recall that a \textit{directed graph} $G=(V,E)$ consists of a set $V$ of vertices, together with a set $E$ of ordered pairs of vertices called \textit{edges}.  We define functions $t$ and $h$ from $E$ to $V$ called the \textit{tail} and \textit{head} functions, respectively, such that for $e=(v,w)$, we have $t(e)=v$ and $h(e)=w$.

In \cite{AlgebrasAssocToDirGraphs}, Gelfand, Retakh, Serconek, and Wilson define a \textit{layered graph} to be a directed graph $\Gamma=(V,E)$ such that $V=\bigsqcup_{i=0}^n V_i$, and such that whenever $e\in E$ and $t(e)\in V_i$, we have $h(e)\in V_{i-1}$.  For any $v\in V$, define $S(v)$ to be the set $\{h(e): t(e)=v\}$.  We will use the notation $V_{\geq k}$ for $\bigsqcup_{i=k}^n V_i$, and $V_+$ for $V_{\geq 1}$.  The layered graphs that we consider here all have the property that for any $v\in V_+$, the set $S(v)$ is nonempty.

\begin{figure}[h]
\caption{A layered graph with layers $V_0$, $V_1$, $V_2$, and $V_3$.}

\begin{center}
\includegraphics[scale=.35]{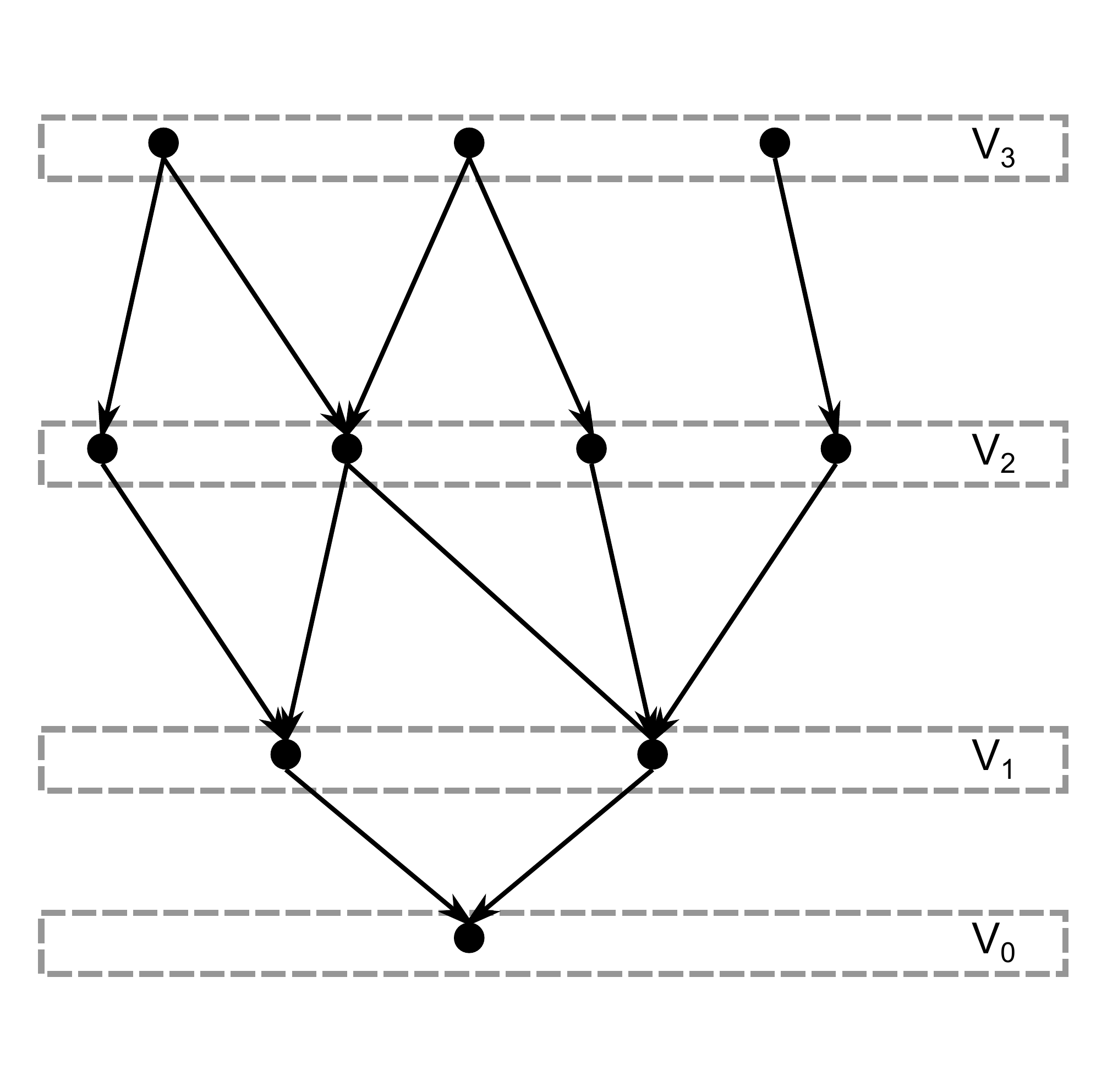}
\end{center}
\end{figure}

We recall some notation from the theory of partially ordered sets.  If $(P,\leq)$ is a poset and $x,y\in P$, we write $x\gtrdot y$ and say that $x$ \textit{covers} $y$ if the following holds:
\begin{itemize}
\item[(i)] $x> y$.
\item[(ii)] For any $z\in P$, such that $x\geq z\geq y,$ we have $x=z$ or $y=z$.
\end{itemize}
A \textit{ranked poset} is a partially ordered set $(P,\leq)$ together with rank function $|\cdot|:P\rightarrow\mathbb{N}$ satisfying the following two properties:
\begin{itemize}
\item[(i)]$(x\gtrdot y)\Rightarrow(|x|=|y|+1)$.
\item[(ii)]$|x|=0$ if and only if $x$ is minimal in $P$.
\end{itemize}

Notice that if $\Gamma=(V,E)$ is a layered graph as defined above, then we can associate to $\Gamma$ a partially ordered set $(V,\leq)$, with the partial order given by $w\leq v$ if and only if there exists a directed path in $\Gamma$ from $v$ to $w$.  If $\Gamma$ satisfies the additional property that $S(v)=\emptyset$ if and only if $v\in V_0$, then this is a ranked poset, with rank given by $|v|=i$ if and only if $v\in V_i$.  When it is convenient, we will treat the graph $\Gamma$ as a ranked partially ordered set of vertices, writing $v\geq w$ to indicate that there is a directed path from $v$ to $w$, $v\gtrdot w$ to indicate that there is a directed edge from $v$ to $w$, and $|v|=i$ to indicate that $v\in V_i$.

Also notice that any ranked poset $(P,\gtrdot)$ has as its Hasse diagram a layered graph $(P,E_P)$, with edges given by $(p,q)$ for each covering relation $p\gtrdot q$.  When it is convenient, we equate $P$ with its Hasse diagram and refer to the algebra $A(P)$ for any ranked poset $P$.  There is a one-to-one correspondence between layered graphs of this type and ranked posets.

We will find the following notation useful in our discussion of layered graphs:

\begin{defn}
For any subset $T\subseteq V_n$, we define the following:
\begin{itemize}
\item[(i)]$S(T)=\bigcup_{t\in T}S(t)$, the set of all vertices covered by some vertex in $T$.
\item[(ii)] $\sim_T$ is the equivalence relation obtained by taking the transitive closure of the relation 
\[R_T=\{(v,w) : \text{There exists } t\in T\text{ with } v,w\in S(t)\}.\]
\item[(iii)] $\mathscr{C}_T$ is the collection of equivalence classes of $V_{n-1}$ under $\sim_T$.
\item[(iv)] $k_T=|\mathscr{C}_T|$.
\item[(v)] $k_T^T=k_T-|V_{n-1}|+|S(T)|$.
\end{itemize}
\end{defn}

Notice that $k_T^T$ is the number of equivalence classes of $S(T)$ under $\sim_T$.  Since $\emptyset\subseteq V_n$ for multiple values of $n$, we will use the notation $\emptyset_n$ to indicate that $\emptyset$ is being considered as a subset of $V_n$.

\begin{prop}
If $\G$ is a layered graph, and $A\subseteq V_n$, then we have the following:
\begin{itemize}
\item[(i)]$|V_{n-1}|=k_{\emptyset_n}$
\item[(ii)]$\left|V_{n-1}\setminus S(A)\right|=k_A-k_A^A,$
\item[(iii)]$|S(A)|=k_{\emptyset_n}-k_A+k_A^A$
\end{itemize}
\end{prop}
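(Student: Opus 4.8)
The plan is to reduce all three statements to a single structural observation about the equivalence relation $\sim_A$, together with the defining identity $k_A^A = k_A - |V_{n-1}| + |S(A)|$. Throughout I will use that $S(A) \subseteq V_{n-1}$, which is immediate from the layered structure: every $t \in A \subseteq V_n$ has $S(t) \subseteq V_{n-1}$, so $S(A) = \bigcup_{t \in A} S(t) \subseteq V_{n-1}$.

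First I would dispatch (i). When $A = \emptyset_n$ there is no $t \in A$, so the generating relation $R_{\emptyset_n}$ is empty and $S(\emptyset_n) = \emptyset$. The equivalence relation on $V_{n-1}$ generated by the empty relation is the identity relation, whose classes are exactly the singletons $\{v\}$, $v \in V_{n-1}$. Hence $\mathscr{C}_{\emptyset_n} = \{\{v\} : v \in V_{n-1}\}$ and $k_{\emptyset_n} = |V_{n-1}|$. The one point to be careful about here is the convention that ``the equivalence relation obtained by the transitive closure of $R_T$'' also includes reflexivity; $R_T$ is already symmetric, but it is reflexive only on $S(T)$, so forcing reflexivity is precisely what turns the vertices outside $S(T)$ into singleton classes.

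Next comes the structural observation, which I would state as a short lemma or fold into the argument: since every pair in $R_A$ consists of two vertices lying in a common $S(t) \subseteq S(A)$, no vertex of $V_{n-1} \setminus S(A)$ is $R_A$-related to anything, so after passing to the generated equivalence relation each such vertex is its own $\sim_A$-class, while every $\sim_A$-class that meets $S(A)$ is contained in $S(A)$. Consequently $\mathscr{C}_A$ splits into the $|V_{n-1} \setminus S(A)|$ singletons coming from $V_{n-1} \setminus S(A)$ together with the classes lying entirely inside $S(A)$; writing $m$ for the number of the latter, we get $k_A = |V_{n-1} \setminus S(A)| + m = (|V_{n-1}| - |S(A)|) + m$. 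Comparing with the definition $k_A^A = k_A - |V_{n-1}| + |S(A)|$ yields $m = k_A^A$ (this is exactly the ``Notice'' remark preceding the proposition), and rearranging gives $|V_{n-1} \setminus S(A)| = k_A - k_A^A$, which is (ii).

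Finally, (iii) is pure bookkeeping: substituting (i) into the definition of $k_A^A$ gives $|S(A)| = k_A^A + |V_{n-1}| - k_A = k_{\emptyset_n} - k_A + k_A^A$. I do not anticipate a genuine obstacle in any of this; the only place that demands care is pinning down the convention in the definition of $\sim_T$ (so that (i) and the singleton count used for (ii) are literally correct), after which all three parts are one or two lines apiece.
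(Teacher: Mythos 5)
Your proposal is correct and follows the same route as the paper, whose proof simply asserts that each part follows from the definitions of $k_A$ and $k_A^A$; you have filled in exactly the intended details (singleton classes for vertices outside $S(A)$, and the identity $k_A^A = k_A - |V_{n-1}| + |S(A)|$). No issues.
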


\begin{proof}
Statement (i) follows immediately from the definition of $k_A$.  Statement (ii) follows from the definition of $k_A^A$.  Statement (iii) follows easily from statements (i) and (ii).
\end{proof}

\subsection{Universal Labeling Algebras}

Let $\Gamma=(V,E)$ be a layered graph.   We call an ordered $n$-tuple of edges $\pi=(e_1,e_2,\ldots,e_n)$ a \textit{path} if $h(e_i)=t(e_{i+1})$ for all $1\leq i<n$. We define $t(\pi)=t(e_1)$, and $h(\pi)=h(e_n)$.  We write $||\pi||=n$, and say that the \textit{length} of the path $\pi$ is equal to $n$.

We will occasionally find it useful to refer to the notion of a \textit{vertex path}, a sequence $(v_1,\ldots,v_n)$ of vertices such that for every $1\leq i<n$ we have $(v_i,v_{i+1})\in E$. Each path $(e_1,\ldots,e_n)$ has an associated vertex path given by $(t(e_1),t(e_2),\ldots,t(e_n),h(e_n))$, and each vertex path $(v_1,\ldots, v_n)$ has an associated path $((v_1,v_2),(v_2,v_3),\ldots,(v_{n-1},v_n))$.

For two paths $\pi$ and $\pi^\prime$ with $h(\pi^\prime)=t(\pi)$, we define
\[\pi\wedge\pi^\prime=(e_1,\ldots,e_n,f_1,\ldots,f_m).\]
Sometimes for a single edge $f$, we will write $f\wedge\pi$ for $(f,e_1,\ldots,e_n)$ or $\pi\wedge f$ for $(e_1,\ldots,e_n,f)$.

For two paths $\pi$ and $\pi^\prime$, we will write $\pi\approx\pi^\prime$ if $\pi$ and $\pi^\prime$ start and end at the same vertices.  That is, $\pi\approx\pi^\prime$ if $||\pi||=||\pi^\prime||$, $t(\pi)=t(\pi^\prime)$, and $h(\pi)=h(\pi^\prime)$.

\begin{figure}[h]
\begin{center}
\caption{The two paths $\pi_1=(a_1,\ldots,a_n)$ and $\pi_2=(b_1,\ldots,b_n)$ start and end at the same vertices, and so $\pi_1\approx\pi_2$.}
\vspace{6pt}
\includegraphics[scale=.3]{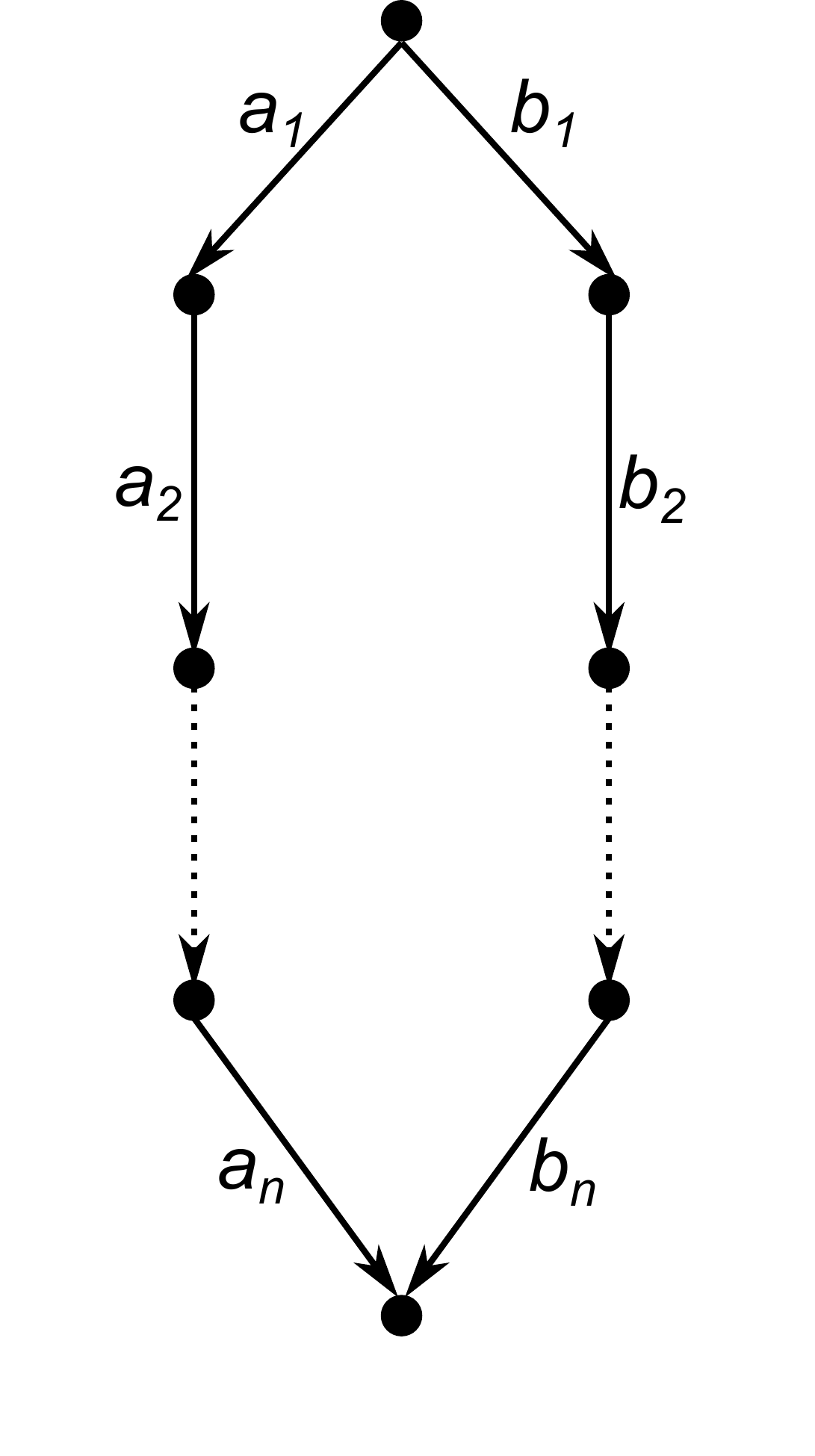}
\end{center}
\end{figure}

Let $\Gamma=(V,E)$ be a layered graph, and fix a field $\mathbb{F}$.  Let $A$ be an $\mathbb{F}$-algebra, and let $f:E\rightarrow A$ be a set map.  To each path $\pi=(e_1,\ldots,e_n)$ in $\Gamma$, we associate a polynomial 
\[p_{f,\pi}=(t-f(e_1))(t-f(e_2))\ldots(t-f(e_n))\in A[t],\]
where $t$ is a central indeterminate.  

\begin{defn}
The ordered pair $(A,f)$ is called a \textit{$\Gamma$-labeling} if it satisfies $p_{f,\pi}=p_{f,\pi^\prime}$ whenever $\pi\approx\pi^\prime$.  Given two $\Gamma$-labelings $(A,f)$ and $(A^\prime,f^\prime)$, the algebra map $\phi:A\rightarrow A^\prime$ is a \textit{$\Gamma$-labeling map} if $\phi\circ f=f^\prime.$
\end{defn}


For each layered graph $\Gamma$, we define an algebra $A(\Gamma)$  as follows: let $T(E)$ be the free associative algebra on $E$ over $F$, and to each path $\pi=(e_1,\ldots,e_n)$, associate the polynomial 
\[P_\pi=(t-e_1)(t-e_2)\ldots(t-e_n)=\sum_{i=0}^\infty e(\pi,i)t^i\in T(E)[t].\]
Define $A(\Gamma)$ to be $T(E)/R$, where $R$ is the ideal generated by the set
\[\{e(\pi,i)-e(\pi^\prime,i) \mid \pi\approx\pi^\prime\text{ and }0\leq i\leq ||\pi||\}.\]
This is precisely the algebra introduced by Gelfand, Retakh, Serconek, and Wilson in \cite{AlgebrasAssocToDirGraphs}.

\begin{prop}
Let $\Gamma=(V,E)$ be a layered graph, let $A(\Gamma)=T(E)/R$ as defined above, and let $f_\Gamma:E\rightarrow A(\Gamma)$ be defined by $f_\Gamma(e)=e+R$.  Then $(A(\Gamma),f_\Gamma)$ is the universal  $\Gamma$-labeling.
\end{prop}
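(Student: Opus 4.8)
The plan is to verify the two defining properties of a universal object directly from the construction: that $(A(\G),f_\G)$ is itself a $\G$-labeling, and that every $\G$-labeling receives a \emph{unique} $\G$-labeling map from it. Both reduce to the fact that $A(\G)$ is the free algebra $T(E)$ modulo exactly the relations obtained by equating the coefficients of $P_\pi$ along parallel paths.

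First I would record the elementary observation that any algebra homomorphism $g\colon B\to C$ extends coefficientwise to a homomorphism $g[t]\colon B[t]\to C[t]$ fixing the central variable $t$, and that under this extension $\prod_j (t-b_j)\mapsto \prod_j (t-g(b_j))$. Applying this to the quotient map $q\colon T(E)\to A(\G)$, whose restriction to $E$ is $f_\G$, sends $P_\pi$ to $p_{f_\G,\pi}$, so the coefficient of $t^i$ in $p_{f_\G,\pi}$ equals $q(e(\pi,i))$. If $\pi\approx\pi^\prime$, then $||\pi||=||\pi^\prime||$, the generators $e(\pi,i)-e(\pi^\prime,i)$ of $R$ lie in $\ker q$ for $0\leq i\leq||\pi||$, and the higher coefficients vanish on both sides; hence $p_{f_\G,\pi}=p_{f_\G,\pi^\prime}$, so $(A(\G),f_\G)$ is a $\G$-labeling.

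For universality, given a $\G$-labeling $(A,f)$ I would use the universal property of the free algebra to obtain the unique algebra map $\tilde f\colon T(E)\to A$ with $\tilde f|_E=f$, and then show $R\subseteq\ker\tilde f$. By the same coefficient argument, $\tilde f(e(\pi,i))$ is the coefficient of $t^i$ in $p_{f,\pi}$; since $(A,f)$ is a $\G$-labeling these coefficients agree whenever $\pi\approx\pi^\prime$, so $\tilde f$ annihilates every generator of $R$, and, being a homomorphism, annihilates the whole two-sided ideal $R$. Thus $\tilde f$ factors as $\phi\circ q$ for a unique algebra map $\phi\colon A(\G)\to A$, and $\phi\circ f_\G = \tilde f|_E = f$, so $\phi$ is a $\G$-labeling map. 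Uniqueness of $\phi$ among $\G$-labeling maps follows because $q$ is surjective, so $A(\G)$ is generated by $f_\G(E)$, and any $\G$-labeling map is pinned down by its values on $f_\G(E)$.

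The only step requiring any care is the compatibility of ``extract the coefficient of $t^i$ in $A[t]$'' with algebra homomorphisms, and this is immediate once $t$ is taken to be central on both sides. I do not expect a genuine obstacle: the statement is essentially built into the presentation of $A(\G)$, and the argument is a formal diagram chase.
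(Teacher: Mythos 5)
Your proposal is correct and follows essentially the same route as the paper: extend $f$ to the free algebra $T(E)$, show the generators of $R$ die under this extension because the coefficients of $p_{f,\pi}$ agree along parallel paths, and factor through the quotient, with uniqueness coming from generation by $f_\Gamma(E)$. The only difference is that you also explicitly verify that $(A(\Gamma),f_\Gamma)$ is itself a $\Gamma$-labeling, a step the paper's proof leaves implicit.
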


\begin{proof}
Let $(A,f)$ be an arbitrary $\Gamma$-labeling.  We need to show that there exists a unique $\Gamma$-labeling map from $A(\Gamma)$ to $A$.  Let $g:E\rightarrow T(E)$ be the canonical embedding of $E$ into the free algebra $T(E)$.  Then there exist unique algebra isomorphisms $\psi_{A(\Gamma)}:T(E)\rightarrow A(\Gamma)$ and $\psi_{A}:T(E)\rightarrow A(\Gamma)$ satisfying $\psi_{A(\Gamma)}\circ g=f_\Gamma$ and $\psi_{A}\circ g=f$.  We know that $\psi_{A(\Gamma)}$ is surjective, with kernel $R$.

Notice that for any path $\pi$ in $\Gamma$, we have
\[p_{f,\pi}=\sum_{i=0}^\infty f(e(\pi,i))t^i\in A[t].\]
Since $p_{f,\pi_1}=p_{f,\pi_2}$ in $A[t]$ for any $\pi_1\approx\pi_2$, it follows that $f(e(\pi_1,i))=f(e(\pi_2,i))$ in $A$ whenever $\pi_1\approx \pi_2$ and $0\leq i\leq ||\pi_1||$.  Thus $e(\pi_1,i)-e(\pi_2,i)\in\ker(\psi_A)$ for all $\pi_1\approx \pi_2$ and $0\leq i\leq ||\pi_1||$.  This gives us $R\subseteq\ker\psi_A$, and so $\psi_A$ factors uniquely through $A(\Gamma)$.  Thus there exists a unique $\Gamma$-labeling map from $A(\Gamma)$ to $A$, and our proof is complete.
\end{proof}

\begin{defn}
With $R$ defined as above, we define $R_i\subseteq R$ to be the ideal generated by 
\[\{e(\pi,k)-e(\pi^\prime,k)\mid \pi\approx\pi^\prime\text{ and }0\leq k\leq \min\{i,||\pi||\}\}\]
We define $A(\Gamma,i)=T(E)/R_i.$
\end{defn}

For a layered graph $\Gamma$ with $n+1$ nonempty levels, we have
\[R_1\subseteq R_2\subseteq\ldots\subseteq R_n=R,\]
and thus there exist canonical maps
\[T(E)\rightarrow A(\Gamma,1)\rightarrow \ldots\rightarrow A(\Gamma,n)=A(\Gamma).\]

\section{Presentation of $A(\Gamma)$ as a Quotient of $T(V_+)$}
\label{vertpres}
In the case where $\Gamma=(V,E)$ has a unique minimal vertex $*$, we can find a presentation of $A(\Gamma)$ as a quotient of $T(V_+)$, the free algebra generated by vertices in $V_+=(V\setminus V_0)$.  Our proof differa from the proof presented in \cite{KozulPaper}, in that we focus on the following proposition:

\begin{prop}
\label{AGamma1}
Let $\Gamma$ be a layered graph with unique minimal vertex $*$.  Then $A(\Gamma,1)\cong T(V_+)$.
\end{prop}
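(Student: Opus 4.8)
The plan is to write down a pair of mutually inverse algebra homomorphisms between $T(V_+)$ and $A(\Gamma,1)$ directly. The first step is to record a convenient description of the defining ideal: once the trivial relation among the leading coefficients of $P_\pi$ and $P_{\pi'}$ is set aside, the generators of $R_1$ coming from a pair of parallel paths $\pi=(e_1,\dots,e_m)$ and $\pi'=(e_1',\dots,e_m')$ are, up to sign, the elements $(e_1+\cdots+e_m)-(e_1'+\cdots+e_m')$. That is, $A(\Gamma,1)$ is $T(E)$ modulo the relations asserting that the sum of the edge generators along a path depends only on its starting and ending vertices.

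With that in hand I would first build a homomorphism \emph{out} of $A(\Gamma,1)$. The tail of every edge lies in $V_+$ and its head lies in $V_+\cup\{*\}$, so, writing $\widehat u=u$ for $u\in V_+$ and $\widehat{*}=0$, there is an algebra homomorphism $\Psi\colon T(E)\to T(V_+)$ determined by $\Psi\big((v,w)\big)=\widehat v-\widehat w$. The virtue of this choice is that it telescopes along paths: if $\pi=(e_1,\dots,e_m)$ has vertex path $(v_0,v_1,\dots,v_m)$, then
\[
\Psi(e_1+\cdots+e_m)=\sum_{j=1}^{m}\big(\widehat{v_{j-1}}-\widehat{v_j}\big)=\widehat{v_0}-\widehat{v_m}=\widehat{t(\pi)}-\widehat{h(\pi)},
\]
which depends only on the endpoints of $\pi$. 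Hence $\Psi$ annihilates every generator of $R_1$ and descends to a homomorphism $\overline\Psi\colon A(\Gamma,1)\to T(V_+)$.

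Next I would build a homomorphism \emph{into} $A(\Gamma,1)$, and here the hypothesis that $*$ is the unique minimal vertex is essential. Fix $v\in V_+$. Because every vertex of $V_+$ has an outgoing edge while $*$ is the only minimal vertex, there is a directed path from $v$ to $*$; every such path has length $|v|$ and any two of them are parallel, so the image in $A(\Gamma,1)$ of $e_1+\cdots+e_{|v|}$, computed from any such path $\pi_v=(e_1,\dots,e_{|v|})$, is a well-defined element $z_v$ independent of the choice of $\pi_v$. Let $\Phi\colon T(V_+)\to A(\Gamma,1)$ be the algebra homomorphism with $\Phi(v)=z_v$. Then $\Phi$ is surjective: for an edge $e=(v,w)$, if $w=*$ then $\overline e=z_v$, while if $w\in V_+$ then $e\wedge\pi_w$ is a path from $v$ to $*$, so $z_v=\overline e+z_w$ and hence $\overline e=z_v-z_w=\Phi(v-w)$. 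In either case $\overline e$ lies in the image of $\Phi$, and since the $\overline e$ generate $A(\Gamma,1)$, the map $\Phi$ is onto.

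Finally I would check that $\overline\Psi\circ\Phi=\mathrm{id}_{T(V_+)}$. Since both sides are algebra homomorphisms, it suffices to evaluate on a generator $v\in V_+$, and there the telescoping identity applied to $\pi_v$ gives $\overline\Psi(z_v)=\widehat v-\widehat{*}=v$. Hence $\Phi$ is injective as well as surjective, so it is an isomorphism with inverse $\overline\Psi$, which is the assertion. I expect the two points that need care to be (i) the bookkeeping pinning $R_1$ down to exactly the ``sum of edges'' relations of the first paragraph, and (ii) the choice of generators: more obvious candidates — for instance the images of the sums $\sum_{w\in S(v)}(v,w)$ of the edges out of $v$ — need not work, since in small characteristic they can fail even to generate $A(\Gamma,1)$, so one really must take sums of edges along full descending paths. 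Granting these two points, the telescoping identity for $\Psi$ does essentially all of the remaining work.
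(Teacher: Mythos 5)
Your proof is correct and follows essentially the same route as the paper: your $\Psi$ is the paper's $\phi^{\prime\prime}$, your $\Phi$ is the paper's $\psi$, and the telescoping computation verifying that $R_1\subseteq\ker\Psi$ and that $\overline\Psi\circ\Phi=\mathrm{id}_{T(V_+)}$ is identical. The only cosmetic differences are that you replace the paper's direct check of the other composite on edge generators with an equivalent surjectivity argument (which uses the very same computation $\overline e=\Phi(v-w)$), and that you observe $z_v$ is independent of the chosen path from $v$ to $*$ rather than fixing a distinguished path $\pi_v$.
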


To understand why this is the case, it is useful to consider the  generating relations for $R_1$, and the structure of $\Gamma$ when it has a unique minimal vertex.  We know that $R_1$ is generated by the set
\[\{e(\pi,n-1)-e(\pi^\prime,n-1) : \pi\approx\pi^\prime, ||\pi||=n\}\]
If $\pi=(a_1,\ldots,a_n)$ and $\pi^\prime=(b_1,\ldots,b_n)$, then $e(\pi,n-1)$ and $e(\pi^\prime,n-1)$ are the $(t^{n-1})$-coefficients of $P_{\pi}=(t-a_1)\ldots(t-a_n)$ and  $P_{\pi^\prime}=(t-b_1)\ldots(t-b_n)$ respectively.  Thus $e(\pi,n-1)=a_1+\ldots+a_n$ and $e(\pi^\prime,n-1)=b_1+\ldots+b_n$.  It follows that $R_1$ is generated by the collection of polynomials of the form
\[(a_1+\ldots+a_n)-(b_1+\ldots+b_n),\]
where $\pi=(a_1,\ldots,a_n)$ and $\pi^\prime=(b_1,\ldots,b_n)$ are paths satisfying $\pi\approx \pi^\prime$.

Since $A(\Gamma,1)$ is a quotient of $T(E)$, and we wish to find an isomorphism between $A(\Gamma,1)$ and $T(V_+)$, it makes sense to find some way of associating the vertices in $V_+$ with edges or paths in $\Gamma$.  In the case where $\Gamma$ has a unique minimal vertex, this is easy.

\begin{defn}
Let $\Gamma=(V,E)$ be a layered graph with unique minimal vertex $*$.  For each vertex $v\in V_+$, we choose a distinguished edge $e_v=(v,w)\in E$.  We define a path
\[\pi_v=\left(e_1,e_2,\ldots,e_{|v|}\right),\]
where $e_1=e_v$, and $e_i=e_{h\left(e_{i-1}\right)}$ for $1<i\leq |v|$.  When there is no possibility of ambiguity, we will also use the symbol $\pi_v$ to designate the corresponding vertex path $\left(t(e_1),t(e_2),\ldots,t\left(e_{|v|}\right)\right)$.
\end{defn}

In the proof of Proposition~\ref{AGamma1}, we will associate each vertex $v\in V_+$ to the sum of the edges in $\pi_v$.  This collection of elements will be sufficient to generate $A(\Gamma,1)$.  

\begin{figure}[h]
\caption{We have $\pi_v\approx f\wedge\pi_w$, since both paths start at $v$ and end at $*$.}
\begin{center}
\vspace{6pt}
\includegraphics[scale=.2]{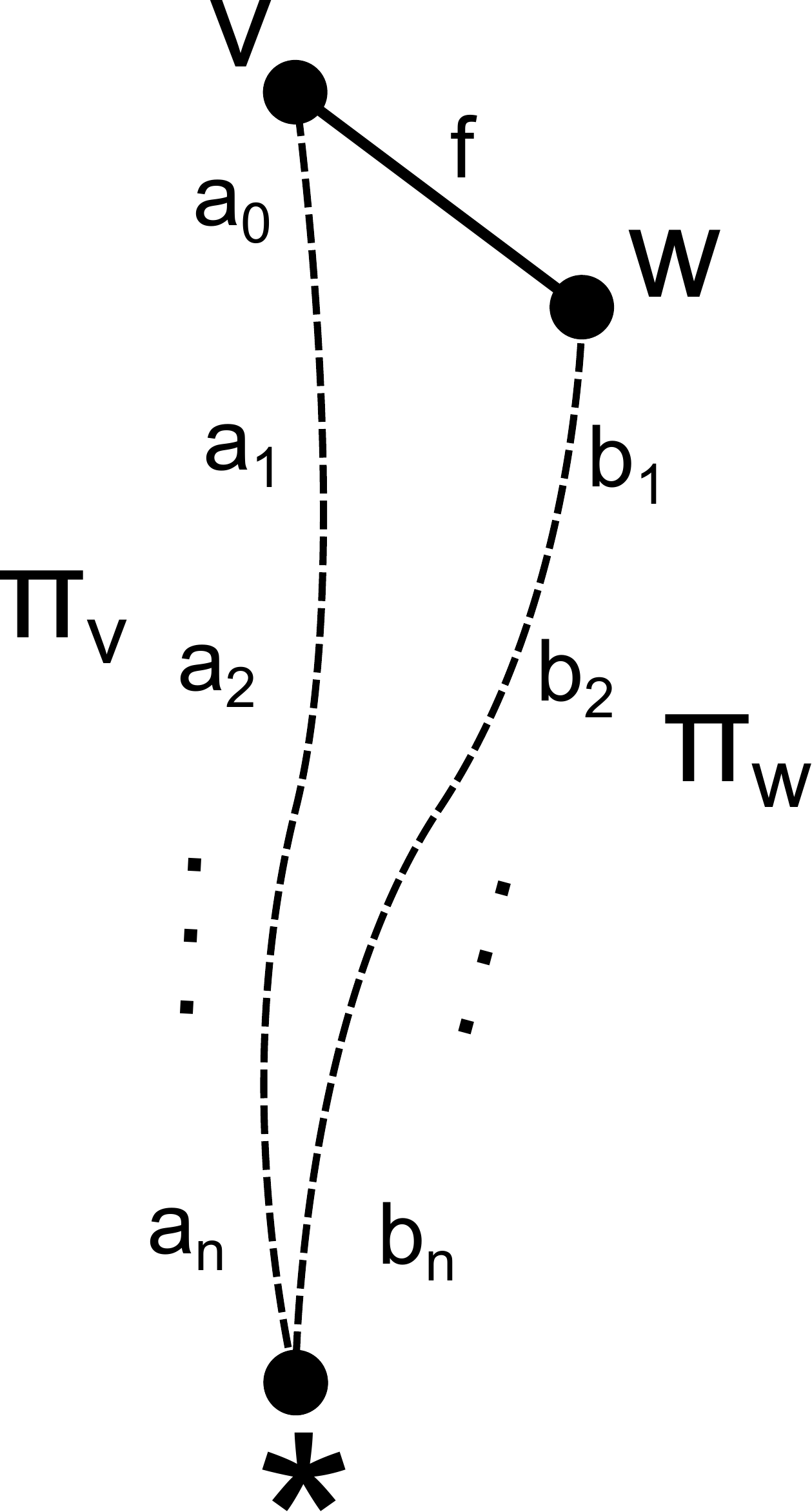}
\end{center}
\end{figure}

To see this, consider an arbitrary edge $f=(v,w)$, with $\pi_v=(a_0,\ldots,a_n)$ and $\pi_w=(b_1,\ldots,b_{n-1})$.  We have $\pi_v\approx f\wedge\pi_w$.  This means that
\[(a_0+\ldots+a_n)-(f+b_1+\ldots+b_{n-1})\in R_1\]
It follows that
\[f-(a_0+\ldots+a_n)-(b_1+\ldots,b_{n-1})\in R_1,\]
and so in $A(\Gamma,1)$, the edge $f$ corresponds to the element we have chosen to associate with $v-w$.  This gives us our isomorphism.  With this in mind, we proceed to the formal proof.

\begin{proof}[Proof of Proposition~\ref{AGamma1}]
Let $\phi^\prime$ be the canonical quotient map from $T(E)$ to $A(\Gamma,1)$ with quotient $R_1$.  Let $\phi^{\prime\prime}$ be the map from $T(E)$ to $T(V_+)$ be defined by
\[\phi^{\prime\prime}((v,w))=\left\{
\begin{array}{ll}
v-w&\text{if }w\neq *\\
v&\text{if }w=*
\end{array}\right.\]

\textbf{Claim 1:} $\phi^{\prime\prime}$ factors through $\phi^\prime$.

To prove this, we must show that $R_1\subseteq \ker(\phi^{\prime\prime})$.  Given our discussion above, it is sufficient to show that for any $\pi_1=(a_1,\ldots,a_n)$ and $\pi_2=(b_1,\ldots,b_n)$ with $\pi_1\approx\pi_2$, we have
\[\phi^{\prime\prime}((a_1+\ldots+a_n)-(b_1+\ldots+b_n))=0.\]

The expression $\phi^{\prime\prime}((a_1+\ldots+a_n)-(b_1+\ldots+b_n))$ is equal to
\[\left((t(a_1)-h(a_1))+\ldots+(t(a_n)-h(a_n))\right)-\left((t(b_1)-h(b_1))+\ldots+(t(b_n)-h(b_n))\right),\]
where $h(f)=0$ whenever $f=(v,*)$.  Since $h(a_i)=t(a_{i+1})$ and $h(b_i)=t(b_{i+1})$ for $1\leq i<n$, this is equal to
\[t(a_1)-h(a_n)-t(b_1)+h(b_n)\]
Since $\pi_1\approx\pi_2$, we have $t(a_1)=t(b_1)$ and $h(a_n)=h(b_n)$, and so this expression is equal to zero, proving Claim 1.

It follows that there exists a map $\phi: A(\Gamma,1)\rightarrow T(V_+)$ satisfying $\phi\circ\phi^\prime=\phi^{\prime\prime}$.  We define a map $\psi:T(V_+)\rightarrow A(\Gamma,1)$ such that for each $v\in V_+$ with $\pi_v=(a_1,\ldots,a_n)$, we have
\[\psi(v)=\phi^\prime(a_1+\ldots+a_n).\]
We wish to prove that $\psi=\phi^{-1}$.

\textbf{Calim 2:} $\psi\circ\phi=\text{id}_{A(\Gamma,1)}$.

Since $A(\Gamma,1)$ is generated by $\phi^{\prime}(E)$, it will suffice to show that $\psi\circ\phi$ fixes the images of all edges.  Let $f=(v,w)$, $w\neq *$, $\pi_v=(a_0,\ldots,a_n)$, and $\pi_w=(b_1,\ldots,b_{n-1})$.  Then
\begin{eqnarray*}
(\psi\circ\phi)(\phi^\prime(f))&=&\psi(\phi^{\prime\prime}(f))\\
&=&\psi(v-w)\\
&=&\phi^\prime((a_0+\ldots+a_n)-(b_1+\ldots+b_{n-1}))
\end{eqnarray*}
From the discussion above, we know that
\[((a_0+\ldots+a_n)-(b_1+\ldots+b_{n-1}))-f\in R_1.\]
It follows that $(\psi\circ\phi)(\phi^\prime(f))=\phi^\prime(f)$ for all $f=(v,w)$ with $w\neq *$.

If $f=(v,*)$, then
\[(\psi\circ\phi)(\phi^\prime(f))=\psi(\phi^{\prime\prime}(f))=\psi(v)=\phi^\prime(f).\]
Thus $\psi\circ\phi$ fixes $\phi^\prime(f)$ for all $f\in E$ and Claim 2 is proved.

\textbf{Claim 3:} $\phi\circ\psi=\text{id}_{T(V_+)}$.

Since $T(V_+)$ is generated by $V_+$ it will suffice to show that $\phi\circ\psi$ fixes $V_+$.  Let $v\in V_+$, and let $\pi_v=(a_1,\ldots,a_n)$.  Then
\begin{eqnarray*}
(\phi\circ\psi)(v)&=&\phi(\phi^\prime(a_0+\ldots+a_n))\\
&=&\phi^{\prime\prime}(a_0+\ldots+a_n)\\
&=&(t(a_0)-h(a_0))+\ldots+(t(a_{n-1})-h(a_{n-1}))+t(a_n)\\
&=&t(a_0)\\
&=&v
\end{eqnarray*}
This proves Claim 3.  Thus $\psi=\phi^{-1}$, and our result is proved.
\end{proof}

\begin{defn}
Let $\phi^{\prime\prime}$ be the map described in the proof of Proposition~\ref{AGamma1}.  For ease of notation, for every $f\in E$, we will set $\tilde{f}=\phi^{\prime\prime}(f)$, and $\tilde{e}(\pi,k)=\phi^{\prime\prime}(e(\pi,k))$.
\end{defn}

\begin{cor}
$A(\Gamma)=T(V_+)/R_V$, where $R_V$ is the ideal generated by
\[\left\{\tilde{e}(\pi,i)-\tilde{e}(\pi^\prime,i) \mid \pi\approx\pi^\prime\text{ and }0\leq i\leq ||\pi||\right\}.\]
\end{cor}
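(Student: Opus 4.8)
The plan is to leverage Proposition~\ref{AGamma1} to transport the defining relations of $A(\Gamma)$ from $T(E)$ across the isomorphism $\phi: A(\Gamma,1) \xrightarrow{\sim} T(V_+)$. Recall that $A(\Gamma) = T(E)/R$, and since $R_1 \subseteq R$, we have $A(\Gamma) = A(\Gamma,1)/(R/R_1)$, where $R/R_1$ is the image of $R$ under the quotient map $\phi': T(E) \to A(\Gamma,1)$. Under the isomorphism $\phi$, this pushes forward to an ideal of $T(V_+)$, and I would show that this ideal is exactly $R_V$ as defined. The generators of $R$ are the elements $e(\pi,i) - e(\pi',i)$ for $\pi \approx \pi'$ and $0 \le i \le ||\pi||$; applying $\phi^{\prime\prime} = \phi \circ \phi'$ sends these to $\tilde{e}(\pi,i) - \tilde{e}(\pi',i)$ by the very definition of the tilde notation. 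Since $\phi^{\prime\prime}$ is surjective (as $\phi$ is an isomorphism and $\phi'$ is a quotient map), the image of the ideal generated by a set is the ideal generated by the image of that set, so $\phi^{\prime\prime}(R) = R_V$.

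The key steps, in order, are: (i) observe $A(\Gamma) \cong A(\Gamma,1)/(R/R_1)$ via the canonical factorization $T(E) \to A(\Gamma,1) \to A(\Gamma)$ noted after the definition of $A(\Gamma,i)$; (ii) invoke Proposition~\ref{AGamma1} to replace $A(\Gamma,1)$ by $T(V_+)$ through $\phi$; (iii) identify the image of $R/R_1$ in $T(V_+)$ under $\phi$ as the ideal generated by $\{\phi^{\prime\prime}(e(\pi,i) - e(\pi',i)) : \pi \approx \pi',\ 0 \le i \le ||\pi||\}$, using that $\phi^{\prime\prime} = \phi \circ \phi'$ carries $R$ onto $R/R_1$'s image and that surjective algebra maps carry generated ideals to generated ideals; (iv) rewrite these generators as $\tilde{e}(\pi,i) - \tilde{e}(\pi',i)$ by definition, obtaining exactly $R_V$; (v) compose the isomorphisms to conclude $A(\Gamma) \cong T(V_+)/R_V$.

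The only mild subtlety — I would not call it a genuine obstacle — is bookkeeping around the generators indexed by $i$: the ideal $R_1$ is generated only by the top-coefficient relations ($k = \min\{i, ||\pi||\}$ with $i=1$, i.e. the degree-$(||\pi||-1)$ coefficients), whereas $R$ uses all coefficients $0 \le i \le ||\pi||$. One must be careful that in forming $R/R_1$ inside $A(\Gamma,1)$, the generators $e(\pi, ||\pi|| - 1) - e(\pi', ||\pi|| - 1)$ already vanish, so they contribute nothing new, but the remaining generators $\tilde{e}(\pi,i) - \tilde{e}(\pi',i)$ for other $i$ are genuinely needed; since $R_V$ as stated includes \emph{all} $0 \le i \le ||\pi||$ (including the redundant ones, which simply map to $0$ and generate the zero ideal), the two ideals coincide. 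I would state this redundancy explicitly in one sentence so the reader sees that no generator is lost and none is spuriously added. The proof is then complete by concatenating the isomorphism $\phi$ with the induced quotient map.
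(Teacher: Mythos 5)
Your proposal is correct and follows essentially the same route as the paper: both transport the ideal $R$ across the isomorphism $\phi$ of Proposition~\ref{AGamma1}, identifying $A(\Gamma)\cong A(\Gamma,1)/\phi^\prime(R)\cong T(V_+)/\phi^{\prime\prime}(R)$ and observing that $\phi^{\prime\prime}(R)=R_V$. The paper simply asserts this last equality as clear, whereas you spell out the (correct) justification via surjective maps carrying generated ideals to generated ideals and note the harmless redundancy of the generators already lying in $R_1=\ker\phi^{\prime\prime}$.
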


\begin{proof}
Since $\phi: A(\Gamma,1)\rightarrow T(V_+)$ is an isomorphism, the induced map from $A(\Gamma,1)/\phi^\prime(R)$ to $T(V_+)/(\phi\circ\phi^\prime)(R)=T(V_+)/\phi^{\prime\prime}(R)$ is also an isomorphism.  It is clear that $\phi^{\prime\prime}(R)=R_V$, and so
\[T(V_+)/R_V\cong A(\Gamma,1)/\phi^\prime(R)\cong A(\Gamma)\]
\end{proof}

\subsection{$A(\Gamma)$ as a Layered Graph Invariant}

To each layered graph $\Gamma$, we have associated an algebra $A(\Gamma)$.  It is natural to ask how this algebra behaves when considered as an invariant of layered graphs.  However, if we only consider its structure as an algebra, it is not a particularly strong invariant.  Consider the two graphs below:

\begin{center}
\includegraphics[scale=.3]{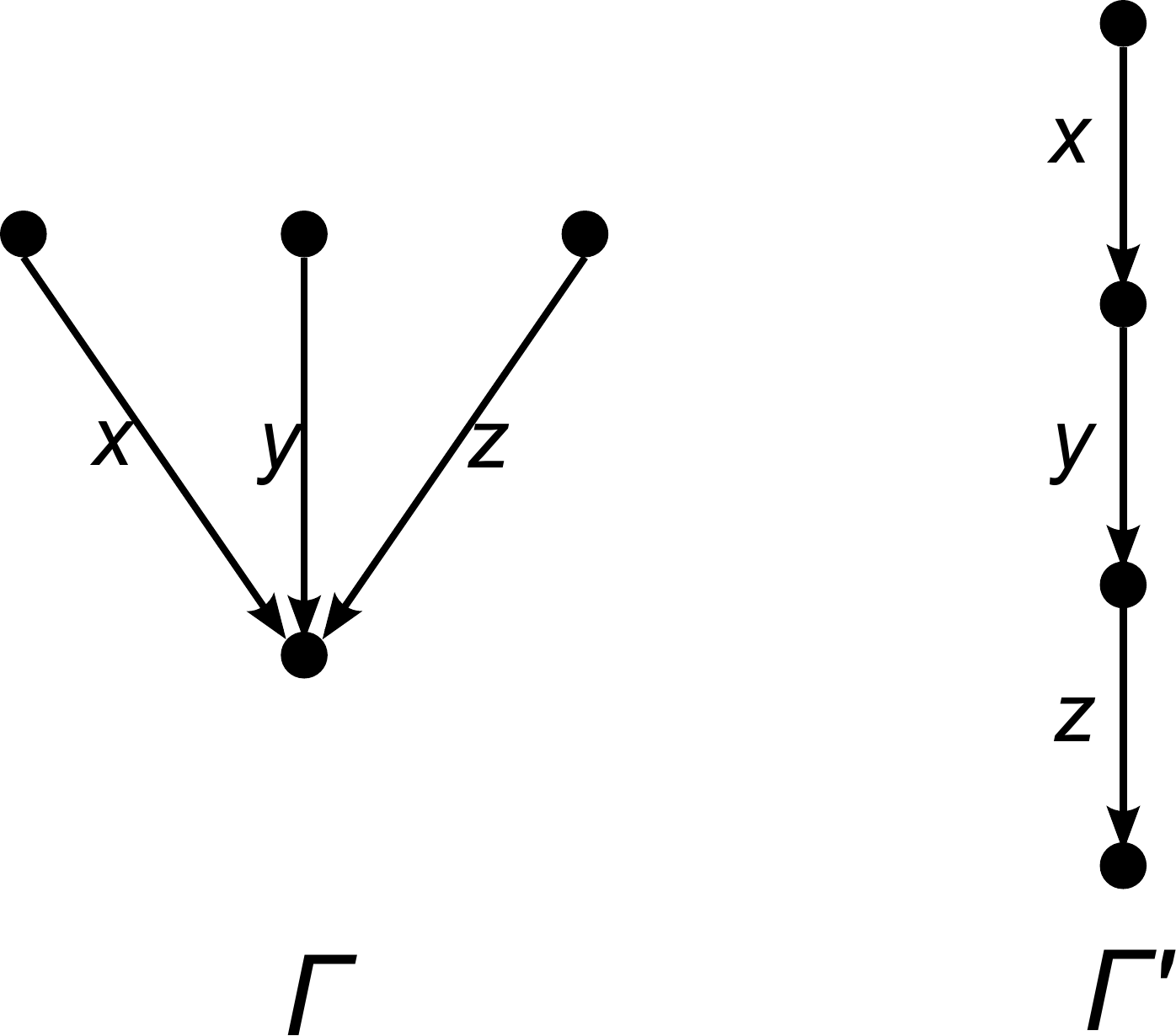}
\end{center}

We have $A(\Gamma)\cong A(\Gamma^\prime)\cong F\langle x,y,z\rangle$.  This is unfortunate, as one would hope that an invariant of layered graphs would be able to distinguish between graphs with different numbers of layers, or graphs whose layers have a different number of vertices.  The algebra $A(\Gamma)$ clearly does neither.  To capture these properties, we will need to consider some additional structure on $A(\Gamma)$.

Notice that the ideal $R_V$ is homogeneous with respect to the degree grading of $T(V_+)$.  Thus there is a grading on $A(\Gamma)$ given by $A(\Gamma)=\bigoplus A(\Gamma)_{[i]},$ with
\[A(\Gamma)_{[i]}=span\{v_1v_2\ldots v_i : v_1,\ldots,v_i\in V_+\}.\]
We will call this the \textit{degree grading} of $A(\Gamma)$.

$A(\Gamma)$ also has a filtration given by the level of the vertices in the graph.  So $A(\Gamma)=\bigcup A(\Gamma)_{i}$, where
\[A(\Gamma)_{i}\leq span\left\{v_1v_2\ldots v_j : \sum_{k=1}^j|v_k|\leq i\right\}.\]
We call this the \textit{vertex filtration} on $A(\Gamma)$.  When we consider the algebra $A(\Gamma)$ together with the degree grading and the vertex filtration, we obtain an invariant that can distinguish between graphs that have layers of different sizes, as we will see in the sections that follow.

\begin{defn}
We say $\Gamma\sim_A\Gamma^\prime$ if and only if there exists an isomorphism $\phi:A(\Gamma)\rightarrow A(\Gamma^\prime)$ which preserves both the degree grading and the vertex filtration.
\end{defn}

When we allow ourselves to consider these pieces of structure, we obtain an invariant that can distinguish between graphs that have layers of different sizes.

\begin{prop}
If $\Gamma=(V,E)$, $\Gamma^\prime=(V^\prime,E^\prime)$, and $\Gamma\sim_A\Gamma^\prime$, then $|V_i|=|V_i^\prime|$ for all $i\in\mathbb{N}$.
\end{prop}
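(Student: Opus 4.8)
The plan is to extract the sequence $(|V_i|)$ from the combined structure of the degree grading and the vertex filtration on $A(\Gamma)$, by computing the dimensions of certain finite-dimensional subquotients. Since $\Gamma$ has a unique minimal vertex (implicitly assumed here, as elsewhere in the paper), Proposition~\ref{AGamma1} and its corollary give $A(\Gamma)=T(V_+)/R_V$ with $R_V$ homogeneous in the degree grading. In particular the degree-$1$ component $A(\Gamma)_{[1]}$ is exactly $\mathrm{span}(V_+)$, with no relations, so $\dim_{\mathbb{F}} A(\Gamma)_{[1]} = |V_+| = \sum_{i\geq 1}|V_i|$. An isomorphism $\phi$ that preserves the degree grading restricts to a linear isomorphism on each $A(\Gamma)_{[j]}$, so $\Gamma\sim_A\Gamma^\prime$ already forces $\sum_{i\geq 1}|V_i| = \sum_{i\geq 1}|V_i^\prime|$.

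To recover the individual $|V_i|$, I would intersect the degree-$1$ component with the vertex filtration. First I claim that $A(\Gamma)_{[1]}\cap A(\Gamma)_i = \mathrm{span}\{v\in V_+ : |v|\leq i\}$; the inclusion $\supseteq$ is immediate from the definition of the vertex filtration, and $\subseteq$ holds because $V_+$ is a basis of $A(\Gamma)_{[1]}$ and the filtration, being defined by a spanning set of monomials graded by total vertex-level, meets $A(\Gamma)_{[1]}$ in the span of those basis vertices of level $\leq i$ (one must check that no nontrivial $\mathbb{F}$-combination of higher-level vertices lands in $A(\Gamma)_i$, which follows since the vertices are linearly independent in degree $1$ and the filtration degree of a vertex $v$ is exactly $|v|$). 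Granting this, $\dim_{\mathbb{F}}\bigl(A(\Gamma)_{[1]}\cap A(\Gamma)_i\bigr) = \sum_{k=1}^i |V_k|$, and hence $|V_i| = \dim\bigl(A(\Gamma)_{[1]}\cap A(\Gamma)_i\bigr) - \dim\bigl(A(\Gamma)_{[1]}\cap A(\Gamma)_{i-1}\bigr)$ for $i\geq 1$, while $|V_0| = 1$ by the unique-minimal-vertex hypothesis.

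Since $\phi:A(\Gamma)\to A(\Gamma^\prime)$ preserves both the degree grading and the vertex filtration, it carries $A(\Gamma)_{[1]}\cap A(\Gamma)_i$ isomorphically onto $A(\Gamma^\prime)_{[1]}\cap A(\Gamma^\prime)_i$ for every $i$. Taking dimensions and subtracting consecutive values yields $|V_i| = |V_i^\prime|$ for all $i\geq 1$, and the $i=0$ case is handled separately as above. I would phrase the final argument as: the integer sequence $\bigl(\dim_{\mathbb{F}}(A(\Gamma)_{[1]}\cap A(\Gamma)_i)\bigr)_{i\geq 0}$ is an invariant of the pair (degree grading, vertex filtration), and its successive differences are precisely the $|V_i|$.

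The main obstacle is the bookkeeping claim $A(\Gamma)_{[1]}\cap A(\Gamma)_i = \mathrm{span}\{v : |v|\leq i\}$: one must be careful that passing to the quotient $T(V_+)/R_V$ does not collapse the degree-$1$ part or distort the vertex filtration on it. This is safe because $R_V$ is homogeneous of degree $\geq 2$ (every defining relator $\tilde e(\pi,i)-\tilde e(\pi^\prime,i)$ with $i\leq\|\pi\|-1$... in fact the degree-$1$ relators were already quotiented out in passing from $T(E)$ to $T(V_+)$, so $R_V$ lives in degrees $\geq 2$), hence $R_V\cap T(V_+)_{[1]} = 0$ and $A(\Gamma)_{[1]}\cong T(V_+)_{[1]} = \mathrm{span}(V_+)$ canonically, with the induced vertex filtration being the obvious one. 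Once this identification is in hand, the rest is a short dimension count; I would keep the write-up focused on stating and justifying that identification cleanly.
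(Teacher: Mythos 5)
Your proposal is correct and follows essentially the same route as the paper: both identify $|V_i|$ as $\dim\bigl((A(\Gamma)_{[1]}\cap A(\Gamma)_i)/(A(\Gamma)_{[1]}\cap A(\Gamma)_{i-1})\bigr)$ and note that a grading- and filtration-preserving isomorphism matches these subquotients. Your additional justification that $R_V$ lives in degrees $\geq 2$, so that $V_+$ stays linearly independent in $A(\Gamma)_{[1]}$, is a correct filling-in of a step the paper leaves implicit.
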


\begin{proof}
We have
\[|V_i|=\dim\left(\left(A(\Gamma)_{[1]}\cap A(\Gamma)_{i}\right)/\left(A(\Gamma)_{[1]}\cap A(\Gamma)_{i-1}\right)\right)\]
and
\[|V_i^\prime|=\dim\left(\left(A(\Gamma^\prime)_{[1]}\cap A(\Gamma^\prime)_i\right)/\left(A(\Gamma^\prime)_{[1]}\cap A(\Gamma^\prime)_{i-1}\right)\right).\]
Any isomorphism $\phi:A(\Gamma)\rightarrow A(\Gamma^\prime)$ that preserves the grading and the filtration will map the subspace from the first expression onto the subspace in the second expression.  The result follows.
\end{proof}

\section{The Associated Graded Algebra $gr(A(\Gamma))$}

\subsection{Associated Graded Algebras}
\label{gramma}
Let $V$ be a vector space with filtration \[V_0\subseteq V_1\subseteq V_2\subseteq\ldots \subseteq V_i\subseteq\ldots\] such that $V=\bigcup_{i}V_i$.  Then $V$ is isomorphic as a vector space to the graded vector space $\bigoplus_i V_{[i]}$, where $V_{[0]}=V_0$, and $V_{[i+1]}=V_{i+1}/V_i$.

If $W$ is a subspace of $V$, then we have 
\[(W\cap V_0)\subseteq (W\cap V_1)\subseteq\ldots\subseteq (W\cap V_i)\subseteq\ldots,\]
with $W=\bigcup_i(W\cap V_i)$.  It follows that $W$ is isomorphic as a vector space to the graded vector space $\bigoplus_i W_{[i]}$, where $W_{[0]}\cong (W\cap V_0)$, and for each $i$, 
\[W_{[i+1]}\cong(W\cap V_{i+1})/(W\cap V_i)\cong((W\cap V_{i+1})+V_i)/V_i.\]
Thus $W_{[i+1]}$ is isomorphic to the subspace of $V_{[i+1]}$ given by
\[\{w+V_i\mid w\in W\cap V_{i+1}\}.\]
We can think of this as the collection of ``leading terms'' of elements of $W\cap V_{i+1}$.

Now consider the algebra $T(V)$ for some graded vector space $V=\bigoplus_i V_{[i]}$.  Every element of $T(V)$ can be expressed as a linear combination of elements of the form $v_1v_2\ldots v_n$, where each $v_t$ is homogeneous---that is, $v_t\in V_{[k]}$ for some $k$.  For each homogeneous $v\in V_{[k]}$, we write $|v|=k$. Thus the grading of $V$ induces a filtration 
\[T(V)_0\subseteq T(V)_1\subseteq\ldots\subseteq T(V)_i\subseteq\ldots\]
on $T(V)$, with
\[T(V)_{i}=span\{v_1\ldots v_n : |v_1|+\ldots+|v_n|\leq i\}.\]
This filtration induces a grading $T(V)=\bigoplus_i T(V)_{[i]}$, where
\[T(V)_{[i]}=span\{v_1\ldots v_n : |v_1|+\ldots|v_n|=i\}.\]

Let $I$ be an ideal of $T(V)$, and consider the algebra $A=T(V)/I$.  The filtration on $T(V)$ induces a filtration
\[A_0\subseteq A_1\subseteq \ldots A_i\subseteq \ldots\]
on $A$. for $a\in A$ with $a\in A_i\setminus A_{i-1}$, we write $|a|=i$. If the ideal $I$ is homogeneous with respect to the grading on $T(V)$, then $A$ inherits this grading.  Otherwise, we can consider the associated graded algebra, denoted $gr A$ and given by
\[gr A=\bigoplus A_{[i]},\]
where $A_{[0]}=A_0$, $A_{[i+1]}=A_{i+1}/A_i$, and where multiplication is given by
\[(x+A_m)(y+A_n)=xy+A_{m+n}.\]
The associated graded algebra $gr A$ is isomorphic to $A$ as a vector space, but not necessarily as an algebra.

For $A=T(V)/I$, we can understand the structure of $gr A$ by considering the graded structure of the ideal $I$.  As a vector space, $I$ is isomorphic to the vector space $gr I=\bigoplus (gr I)_{[i]},$ where $(gr I)_{[0]}=I\cap T(V)_0$, and
\[(gr I)_{[i+1]}=\{w+T(V)_i : w\in I\cap T(V)_{i+1}\}.\]
If we think of $T(V)$ as the direct sum $\bigoplus T(V)_{[i]}$, then we can consider each $(gr I)_{[i]}$ as a subset of $T(V)_{[i]}$, and we can think of $gr I$ as the collection of sums of leading terms of elements of $I$.  This is a graded ideal in the graded algebra $T(V)$.  From \cite{KozulPaper}, we have the following result:

\begin{lemma}
Let $V$ be a graded vector space and $I$ an ideal in $T(V)$.  Then 
\[gr(T(V)/(I))\cong T(V)/(gr(I)).\]
\end{lemma}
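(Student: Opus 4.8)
The plan is to exhibit a natural surjection $\pi: T(V)/(\mathrm{gr}(I)) \twoheadrightarrow \mathrm{gr}(T(V)/I)$ and then show it is injective by a dimension count in each filtration degree. First I would set $A = T(V)/I$ and unwind the definitions: $\mathrm{gr}(A) = \bigoplus_i A_{[i]}$ with $A_{[i]} = A_i/A_{i-1}$ (and $A_{[0]} = A_0$), where $A_i$ is the image of $T(V)_i$ under the quotient map $q: T(V) \to A$. Since $\mathrm{gr}(I)$ is a homogeneous ideal of the graded algebra $T(V) = \bigoplus_i T(V)_{[i]}$, the quotient $T(V)/(\mathrm{gr}(I))$ is itself graded, with degree-$i$ piece $T(V)_{[i]}/(\mathrm{gr}(I))_{[i]}$. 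The map $\pi$ will send the class of a homogeneous element $x \in T(V)_{[i]}$ to $q(x) + A_{i-1} \in A_{[i]}$; I would check this is well defined (i.e.\ kills $(\mathrm{gr}(I))_{[i]}$), multiplicative, and clearly surjective, since every element of $A_i$ is $q$ of something in $T(V)_i$ and hence, modulo $A_{i-1}$, is $q$ of a homogeneous element of $T(V)_{[i]}$.

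For injectivity I would argue degree by degree. Fix $i$ and suppose $x \in T(V)_{[i]}$ with $q(x) \in A_{i-1}$; I must show $x \in (\mathrm{gr}(I))_{[i]}$, i.e.\ $x$ is the leading term (in the sense of Section~\ref{gramma}) of some element of $I \cap T(V)_i$. Since $q(x) \in A_{i-1}$, there is $y \in T(V)_{i-1}$ with $q(x) = q(y)$, so $x - y \in I$; and $x - y \in T(V)_i$ since $x \in T(V)_{[i]} \subseteq T(V)_i$ and $y \in T(V)_{i-1} \subseteq T(V)_i$. The leading term of $x - y$ with respect to the filtration is exactly the degree-$i$ homogeneous component, which is $x$ (as $y$ has filtration degree $\le i-1$, contributing nothing in degree $i$). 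Hence $x \in (\mathrm{gr}(I))_{[i]}$, which is precisely what it means for $\pi$ to be injective in degree $i$. Running this for all $i$ gives that $\pi$ is a graded algebra isomorphism.

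The one point that needs genuine care — and which I expect to be the main obstacle — is bookkeeping the identification of $T(V)$ with $\bigoplus_i T(V)_{[i]}$ and the convention, inherited from the preceding discussion, that an element of $I \cap T(V)_{i+1}$ contributes to $(\mathrm{gr}(I))_{[i+1]}$ via its class modulo $T(V)_i$, i.e.\ via its top-degree homogeneous component under the vector-space splitting. One must be consistent about whether ``leading term'' means the full homogeneous component in the top occupied degree or just the coset; the argument above uses that these agree once we split $T(V)$ into its homogeneous pieces, and that the filtration degree of a homogeneous element $v_1\cdots v_n$ is literally $|v_1|+\cdots+|v_n|$. Provided this identification is stated cleanly, the surjectivity, well-definedness, and multiplicativity of $\pi$ are routine, and the degree-wise injectivity argument above closes the proof. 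Alternatively, one can avoid the explicit inverse by the standard short-exact-sequence bookkeeping: the filtration $\{T(V)_i\}$ induces compatible filtrations on $I$ and on $A$, giving a short exact sequence $0 \to \mathrm{gr}(I) \to \mathrm{gr}(T(V)) \to \mathrm{gr}(A) \to 0$ of graded vector spaces; since $T(V)$ is already graded, $\mathrm{gr}(T(V)) = T(V)$, and identifying $\mathrm{gr}(I)$ with the ideal generated by leading terms yields the claim, but the hands-on map $\pi$ is the most transparent route to write down.
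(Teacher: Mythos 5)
Your proof is correct. Note that the paper itself does not prove this lemma at all --- it is quoted from \cite{KozulPaper} --- so there is no in-text argument to compare against; what you have written is the standard direct proof, and it is complete. The two halves fit together exactly as they should: well-definedness of $\pi$ on $(\mathrm{gr}(I))_{[i]}$ comes from the fact that if $x$ is the top homogeneous component of some $w\in I\cap T(V)_i$ then $q(x)=q(x-w)\in A_{i-1}$, and your injectivity step is the precise converse. The only points worth polishing in a written version are (a) the degree-$0$ convention ($A_{[0]}=A_0$ and $(\mathrm{gr}(I))_{[0]}=I\cap T(V)_0$, where the argument degenerates to ``$q(x)=0$ iff $x\in I\cap T(V)_0$''), and (b) the observation that the injectivity computation shows $\mathrm{gr}(I)$ is already an ideal (so the parenthetical ``ideal generated by'' in the statement is harmless), which you implicitly use when checking $\pi$ only on the homogeneous pieces of $\mathrm{gr}(I)$. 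Neither is a gap.
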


In the case of the universal labeling algebra $A(\Gamma)$ for a layered graph $\Gamma$ with unique minimal vertex $*$, we will use the notation $gr A(\Gamma)$ to denote the associated graded algebra with respect to the vertex filtration on $A(\Gamma)$.  In the sections that follow, we will see a particularly nice presentation of the algebra $gr A(\Gamma)$, and we will use the information that we gain from this presentation to construct a related algebra $B(\Gamma)$ for a particular class of layered graphs.

\subsection{Bases For $A(\Gamma)$ and $gr A(\Gamma)$}

Here we begin by recalling the basis for $A(\Gamma)$ given by Gelfand, Retakh, Serconek, and Wilson in \cite{AlgebrasAssocToDirGraphs}. Let $\Gamma=(V,E)$ be a layered graph.  For every ordered pair $(v,k)$ with $v\in V$ and $0\leq k\leq |v|$, we define $\tilde{e}(v,k)=\tilde{e}(\pi_v,k)$.  We define $\mathbb{B}_1$ to be the collection of all sequences of ordered pairs
\[\mathbb{b}=((b_1,k_1),\ldots,(b_n,k_n)),\]
and for each such sequence, we define
\[\tilde{e}(\mathbb{b})=\tilde{e}(b_1,k_1)\ldots\tilde{e}(b_n,k_n).\]
Given $(v,k)$ and $(v^\prime,k^\prime)$ with $v,v^\prime\in V$,
 $0\leq k\leq|v|$, and $0\leq k^\prime,\leq |v^\prime|$, we say that $(v,k)$ ``covers'' $(v^\prime,k^\prime)$, and write $(v,k)\forces(v^\prime,k^\prime)$ if $v>v^\prime$ and $|v|-|v^\prime|=k$.  That is to say, $(v,k)\models(v^\prime,k^\prime)$ if and only if there exists a path of length $k$ from $v$ to $v^\prime$.  Notice that this relation does not depend on $k^\prime$.

Define $\mathbb{B}$ to be the collection of sequences
\[\mathbb{b}=((b_1,k_1),\ldots,(b_n,k_n))\]
of ordered pairs such that for any $1\leq i<n$, $(b_i,k_i)\not\forces(b_{i+1},k_{i+1})$.  In \cite{AlgebrasAssocToDirGraphs}, Gelfand, Retakh, Serconek, and Wilson show that $\{\tilde{e}(\mathbb{b}):\mathbb{b}\in \mathbb{B})$ is a basis for $A(\Gamma)$.

In the same paper, a basis for $gr A(\Gamma)$ is also constructed as follows:  For each $\mathbb{b}\in\mathbb{B}_1$, with $|\tilde{e}(\mathbb{b})|=i$ with respect to the vertex-filtration, define $\overline{e}(\mathbb{b})=\tilde{e}(\mathbb{b})+A(\Gamma)_{i-1}$ in $gr A(\Gamma)$.  Then for any distinct $\mathbb{b}$ and $\mathbb{b}^\prime$ in $\mathbb{B}$, we have $\overline{e}(\mathbb{b})\neq\overline{e}(\mathbb{b}^\prime)$, and the set $\{\overline{e}(\mathbb{b}):\mathbb{b}\in \mathbb{B}\}$ is a basis for $gr A(\Gamma)$.

With a little extra notation, we can describe this basis in another way.  

\begin{defn}For every ordered pair $(v,k)$ with $v\in V$, $0\leq k\leq |v|$ let $\pi_v$ denote the vertex path $(v_1,v_2,\ldots,v_n)$. We define the monomial
\[m(v,k)=v_1v_2\ldots v_k\in T(V_+).\]
For each $\mathbb{b}=((b_1,k_1),\ldots,(b_n,k_n))\in \mathbb{B}_1$, we define
\[m(\mathbb{b})=m(b_1,k_1)\ldots m(b_n,k_n).\]
\end{defn}

\begin{prop}
Let $\Gamma$ be a layered graph, and let $\phi$ be the quotient map from $T(V_+)$ to $T(V_+)/gr R_V\cong gr A(\Gamma)$.  Then for all $\mathbb{b}\in\mathbb{B}_1$, we have $\phi(m(\mathbb{b}))=\overline e(\mathbb{b})$.
\end{prop}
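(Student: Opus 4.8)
The plan is to prove the identity generator‑by‑generator and then assemble the general case from the multiplicativity of $\phi$ and of $m(\cdot)$. The crucial reduction is to the single‑pair case: since both $m(\mathbb{b})=m(b_1,k_1)\cdots m(b_n,k_n)$ and $\overline e(\mathbb{b})=\overline e(b_1,k_1)\cdots\overline e(b_n,k_n)$ are defined as products over the entries of $\mathbb{b}$, and $\phi$ is an algebra homomorphism, it suffices to show that $\phi(m(v,k))=\overline e(v,k)$ for every admissible pair $(v,k)$ with $v\in V$ and $0\le k\le |v|$. I would also dispose of the degenerate case $k=0$ at the outset, where $m(v,0)=1$ and $\overline e(v,0)=\tilde e(\pi_v,0)+A(\Gamma)_{-1}=1$, so there is nothing to check.

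For the single‑pair case, I would unwind the definition $\tilde e(v,k)=\tilde e(\pi_v,k)=\phi''(e(\pi_v,k))$, where $e(\pi_v,k)$ is the $t^{k}$‑coefficient of $P_{\pi_v}=(t-a_1)\cdots(t-a_n)$ for $\pi_v=(a_1,\dots,a_n)$ with associated vertex path $(v_1,v_2,\dots,v_n)$, $v_1=v$. So $\tilde e(v,k)$ is an explicit signed sum of products of the $\tilde f_i=\phi''(a_i)$, and applying the formula $\phi''((u,w))=u-w$ (with $w=0$ when $w=*$) to each $a_i$ one gets a telescoping: $\tilde a_i=v_i-v_{i+1}$. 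The key computation is that the $t^{||\pi_v||-k}$‑coefficient‑type elementary symmetric expression in the $\tilde a_i$, once expanded, has \emph{leading term} (with respect to the degree grading, which after passing to $gr$ detects the top vertex‑filtration degree) exactly the monomial $v_1v_2\cdots v_k=m(v,k)$, with all lower‑filtration corrections landing in $A(\Gamma)_{|m(v,k)|-1}$. This is precisely the computation by which Gelfand--Retakh--Serconek--Wilson identify the $\overline e(\mathbb{b})$ basis, and it is the step I would present in most detail: expanding $\prod(t - (v_i-v_{i+1}))$ and checking that the highest vertex‑filtration‑degree part of the $k$‑th coefficient is $v_1\cdots v_k$. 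Concretely, in $T(V_+)$ the element $\tilde e(v,k)$ can be written as $m(v,k)$ plus a sum of monomials each of strictly smaller vertex‑filtration degree, because every substitution of $v_{i+1}$ for some $v_i$ in the product strictly drops the degree (as $|v_{i+1}|=|v_i|-1$).

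Given that, passing to $gr A(\Gamma)\cong T(V_+)/gr R_V$ via $\phi$: by definition $\overline e(v,k)=\tilde e(v,k)+A(\Gamma)_{|\tilde e(v,k)|-1}$, and under the identification of $gr A(\Gamma)$ with $T(V_+)/gr R_V$ this corresponds to the class of the leading (top vertex‑filtration degree) part of $\tilde e(v,k)$, which we have just identified as $m(v,k)$. Hence $\phi(m(v,k))=\overline e(v,k)$. Multiplying over the entries of $\mathbb{b}$ and using that $\phi$ is a homomorphism and that both $m$ and $\overline e$ are defined multiplicatively over $\mathbb{b}$, we obtain $\phi(m(\mathbb{b}))=\overline e(\mathbb{b})$ for all $\mathbb{b}\in\mathbb{B}_1$, completing the proof.

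The main obstacle I anticipate is bookkeeping around the two different gradings/filtrations — the degree grading on $T(V_+)$ versus the vertex filtration — and being careful that "leading term" means the component of maximal vertex‑filtration degree, not the homogeneous degree‑$n$ part. One must verify that in the expansion of $\tilde e(v,k)$ every term other than $v_1\cdots v_k$ genuinely has smaller vertex‑filtration degree; this uses $|v_i|-|v_{i+1}|=1$ along the distinguished path $\pi_v$, so each replacement of a tail vertex by its successor costs exactly one unit of vertex‑filtration degree, and no cancellation can occur at the top degree since $v_1\cdots v_k$ arises uniquely (by picking $t(a_i)$ rather than $h(a_i)$ at each factor). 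Everything else is the routine telescoping already sketched in the paragraph preceding the statement, so once the leading‑term identification is pinned down the proof is short.
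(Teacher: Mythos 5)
Your proposal is correct and follows essentially the same route as the paper: reduce to a single pair $(v,k)$, expand $\tilde e(v,k)$ as a signed sum of products of the telescoping differences $v_i-v_{i+1}$, identify its leading term under the vertex filtration as $m(v,k)=v_1\cdots v_k$, and extend multiplicatively. Your extra care about why $v_1\cdots v_k$ is the unique top-filtration-degree term (each substitution of $v_{i+1}$ for $v_i$ strictly drops the degree) is a welcome elaboration of a step the paper states without justification.
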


\begin{proof}
We know that for $(v,k)$ with $v\in V$, $0\leq k\leq |v|$, and with vertex path $\pi_v=(v_1,\ldots,v_n),$ we define ${e}(v,k)$ by
\[(t-(v_1,v_2))(t-(v_2,v_3))\ldots(t-(v_{n-1},v_n))=\sum_{i=0}^{|v|}e(v,i)t^i.\]
Thus we have
\[\tilde{e}(v,k)=\hspace{-12pt}\sum_{1\leq i_1<i_2<\ldots i_k<n}\hspace{-12pt}(-1)^k\left(v_{i_1}-v_{(i_1+1)}\right)\ldots\left(v_{i_k}-v_{(i_k+1)}\right),\]
and the highest-order term of $\tilde{e}(v,k)$ with respect to the vertex filtration is the monomial $v_1v_2\ldots v_k$, or $m(v,k)$.  It follows that $\overline{e}(v,k)=\phi(m(v,k))$, and so by extension we have $\overline{e}(\mathbb{b})=\phi(m(\mathbb{b}))$ for all $\mathbb{b}\in\mathbb{B}_1$.
\end{proof}

\begin{cor}
\label{grbasis}
For any distinct $\mathbb{b}$ and $\mathbb{b}^\prime$ in $\mathbb{B}$, we have $\phi(m(\mathbb{b}))\neq\phi(m(\mathbb{b}^\prime))$, and the set $\{\phi(m(\mathbb{b})) : \mathbb{b}\in \mathbb{B}\}$ is a basis for $gr A(\Gamma)$.
\end{cor}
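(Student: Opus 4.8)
The plan is to deduce this directly from the preceding Proposition together with the basis theorem of Gelfand, Retakh, Serconek, and Wilson recalled earlier in this subsection. First I would observe that $\mathbb{B}\subseteq\mathbb{B}_1$: every sequence appearing in $\mathbb{B}$ is in particular a sequence of ordered pairs $((b_1,k_1),\ldots,(b_n,k_n))$, and hence lies in $\mathbb{B}_1$. Consequently the preceding Proposition applies to every $\mathbb{b}\in\mathbb{B}$ and furnishes the equality $\phi(m(\mathbb{b}))=\overline e(\mathbb{b})$ in $gr A(\Gamma)$.

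Next I would invoke the fact, recalled above from \cite{AlgebrasAssocToDirGraphs}, that $\overline e(\mathbb{b})\neq\overline e(\mathbb{b}^\prime)$ for distinct $\mathbb{b},\mathbb{b}^\prime\in\mathbb{B}$ and that $\{\overline e(\mathbb{b}):\mathbb{b}\in\mathbb{B}\}$ is a basis for $gr A(\Gamma)$. Substituting $\phi(m(\mathbb{b}))$ for $\overline e(\mathbb{b})$ throughout, which is legitimate by the previous paragraph, yields exactly the two assertions of the corollary: distinctness of the elements $\phi(m(\mathbb{b}))$ for distinct indices, and the statement that $\{\phi(m(\mathbb{b})):\mathbb{b}\in\mathbb{B}\}$ is a basis for $gr A(\Gamma)$.

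I do not expect any genuine obstacle here. All of the substantive work has already been carried out in the Proposition, where the leading-term computation identifying the monomial $m(v,k)$ as the highest-order term of $\tilde e(v,k)$ with respect to the vertex filtration was performed, and the basis statement itself is quoted from the literature. The corollary is therefore purely a matter of transporting an already-established basis along an already-established identification of elements, so the proof is essentially a one-line substitution.
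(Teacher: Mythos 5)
Your proposal is correct and is exactly the argument the paper intends: the corollary is stated without proof precisely because it follows by substituting $\phi(m(\mathbb{b}))=\overline e(\mathbb{b})$ (from the preceding Proposition, applicable since $\mathbb{B}\subseteq\mathbb{B}_1$) into the basis statement quoted from \cite{AlgebrasAssocToDirGraphs}.
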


\subsection{Presentation of $gr A(\Gamma)$}

We can use the basis we have just defined to find a nice presentation of $gr A(\Gamma)$.  Let $R_{gr}$ be the ideal of $T(V_+)$ generated by all elements of the form $v_0v_1\ldots v_n-w_0w_1\ldots w_n$, where $(v_0,v_1,\ldots,v_n)$ and $(w_0,w_1,\ldots w_n)$ are vertex paths with $v_0=w_0$.

\begin{theorem}
\label{PresentationOfGrA}
$gr A(\Gamma)\cong T(V_+)/R_{gr}$.
\end{theorem}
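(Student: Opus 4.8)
The plan is to prove the isomorphism by identifying $R_{gr}$ with $gr R_V$, so that the result follows from Lemma~1 (the statement that $gr(T(V)/I)\cong T(V)/gr(I)$) together with the Corollary giving $A(\Gamma)=T(V_+)/R_V$. Thus the entire content of the theorem is the claim
\[
R_{gr}=gr R_V,
\]
as ideals of the graded algebra $T(V_+)$. I will prove this by establishing the two inclusions separately.

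First I would show $R_{gr}\subseteq gr R_V$. Given a vertex path $(v_0,v_1,\ldots,v_n)$, consider the path $\pi$ it determines and the path $\pi_{v_0}$ through the distinguished edges. These satisfy $\pi\approx\pi_{v_0}$, and the relation $\tilde e(\pi,n)-\tilde e(\pi_{v_0},n)\in R_V$ has leading term (with respect to the vertex filtration) equal to $v_0v_1\ldots v_n - m(v_0,n)$, by the same computation used in the Proposition preceding Corollary~\ref{grbasis} identifying the highest-order term of $\tilde e(v,k)$ with $m(v,k)$. Hence $v_0v_1\ldots v_n - m(v_0,n)\in gr R_V$ for every vertex path, and subtracting two such relations sharing the same initial vertex $v_0=w_0$ gives $v_0v_1\ldots v_n - w_0w_1\ldots w_n \in gr R_V$; since $gr R_V$ is an ideal, all of $R_{gr}$ lies inside it. I would be a little careful here about whether the leading-term computation is exactly degree-$n$ (i.e.\ the lower-order terms are genuinely lower in the vertex filtration), but this is precisely what the earlier Proposition records.

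For the reverse inclusion $gr R_V\subseteq R_{gr}$, the clean route is a dimension/basis count rather than a direct generator chase. By Corollary~\ref{grbasis}, the set $\{\phi(m(\mathbb{b})):\mathbb{b}\in\mathbb{B}\}$ is a basis for $gr A(\Gamma)=T(V_+)/gr R_V$. On the other hand, I claim the images of $\{m(\mathbb{b}):\mathbb{b}\in\mathbb{B}\}$ span $T(V_+)/R_{gr}$: using relations of the form $v_0v_1\ldots v_n = m(v_0,n)$ (which lie in $R_{gr}$ by the first inclusion's computation, or directly — any two vertex paths from $v_0$ have equal images, and $m(v_0,n)$ is the image of one such path), every monomial in $T(V_+)$ can be rewritten, block by block, as a product of $m(v,k)$'s, and then the "forces" straightening relations $(b_i,k_i)\forces(b_{i+1},k_{i+1})$ — which are themselves instances of the defining relations of $R_{gr}$, since $(b_i,k_i)\forces(b_{i+1},k_{i+1})$ means the path $\pi_{b_i}$ passes through $b_{i+1}$ — let one reduce to standard form $\mathbb{b}\in\mathbb{B}$. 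Since $R_{gr}\subseteq gr R_V$, there is a surjection $T(V_+)/R_{gr}\twoheadrightarrow T(V_+)/gr R_V = gr A(\Gamma)$ sending the spanning set $\{m(\mathbb{b}):\mathbb{b}\in\mathbb{B}\}$ onto the basis $\{\phi(m(\mathbb{b})):\mathbb{b}\in\mathbb{B}\}$; a surjection carrying a spanning set bijectively onto a basis is an isomorphism, forcing $R_{gr}=gr R_V$.

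The main obstacle I anticipate is the spanning argument in the last paragraph: one must check that the reduction of an arbitrary monomial of $T(V_+)$ to a linear combination of $m(\mathbb{b})$ with $\mathbb{b}\in\mathbb{B}$ can be carried out using only relations in $R_{gr}$, and that this process terminates. Termination needs a monovariant — ordering monomials first by vertex-filtration degree (preserved by the relations), then by some measure of how far the block decomposition is from being $\forces$-reduced — analogous to the straightening argument Gelfand--Retakh--Serconek--Wilson use to prove $\{\tilde e(\mathbb{b}):\mathbb{b}\in\mathbb{B}\}$ is a basis for $A(\Gamma)$. The rest — the leading-term identification and the formal consequence via Lemma~1 — is routine given the earlier results.
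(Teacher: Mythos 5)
Your proposal follows essentially the same route as the paper's proof: the inclusion $R_{gr}\subseteq gr R_V$ comes from identifying leading terms of the defining relations of $R_V$, and a straightening argument showing that the monomials $m(\mathbb{b})$ for $\mathbb{b}\in\mathbb{B}$ span $T(V_+)/R_{gr}$ combines with Corollary~\ref{grbasis} so that the surjection $T(V_+)/R_{gr}\rightarrow gr A(\Gamma)$ carries a spanning set onto a basis and is therefore an isomorphism, forcing $gr R_V\subseteq R_{gr}$. One small correction: $\pi\approx\pi_{v_0}$ fails unless $v_n=*$ and $n=|v_0|$, so before extracting a relation in $R_V$ you must first extend the vertex path $(v_0,\ldots,v_n)$ to a path terminating at $*$ (as the paper does), and then the relevant element is $\tilde e(\pi_v,n+1)-\tilde e(\pi_w,n+1)$, whose leading term is $v_0v_1\ldots v_n-w_0w_1\ldots w_n$.
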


To prove this, we will need to show that $R_{gr}=gr R_V$.  Showing that $R_{gr}\subseteq gr R_V$ is fairly straightforward.

\begin{prop}
Let $(v_0,v_1,\ldots,v_n)$ and $(w_0,w_1,\ldots,w_n)$ be vertex paths with $v_0=w_0$.  Then
\[v_0v_1\ldots v_n-w_1w_2\ldots w_n\in gr R\]
\end{prop}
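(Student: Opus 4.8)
The plan is to realize $v_0v_1\cdots v_n - w_0w_1\cdots w_n$ as the vertex-filtration leading term of an explicit element of $R_V$. Since these are monomials in $T(V_+)$, every vertex occurring in the two paths lies in $V_+$; in particular $|v_n|\geq 1$, so $|v_0| = |v_n| + n \geq n+1$. If the two monomials coincide there is nothing to prove, so assume they are distinct. I would first extend both paths down to the unique minimal vertex $*$: since $*$ is reachable from every vertex, choose a maximal vertex path $\hat\pi = (v_0,\ldots,v_n,v_{n+1},\ldots,v_m)$ with $m=|v_0|$ and $v_m = *$, and likewise $\hat\pi' = (w_0,\ldots,w_n,\ldots,w_m)$ with $w_0 = v_0$ and $w_m = *$. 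Both of these start at $v_0$, end at $*$, and have length $m$, so $\hat\pi\approx\hat\pi'$, whence $\tilde e(\hat\pi,n+1) - \tilde e(\hat\pi',n+1)\in R_V$; this makes sense since $0\leq n+1\leq m$.

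Next I would pin down the top vertex-filtration term of $\tilde e(\hat\pi,k)$, rerunning the computation from the proof of the Proposition identifying $\overline{e}(\mathbb{b})$ with $\phi(m(\mathbb{b}))$, now with $\hat\pi$ in place of $\pi_v$. Expanding $\tilde e(\hat\pi,k)$ as a signed sum of products $(v_{i_1}-v_{i_1+1})\cdots(v_{i_k}-v_{i_k+1})$ over $0\leq i_1<\cdots<i_k\leq m-1$, every monomial that occurs has the form $v_{j_1}\cdots v_{j_k}$ with $j_1\leq\cdots\leq j_k$ and $j_\ell\geq \ell-1$; hence the unique monomial of maximal vertex-filtration degree is $v_0v_1\cdots v_{k-1}$, occurring with a nonzero scalar coefficient $c_k$ depending only on $k$. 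In particular the leading term of $\tilde e(\hat\pi,n+1)$ is $c_{n+1}\, v_0v_1\cdots v_n$ and that of $\tilde e(\hat\pi',n+1)$ is $c_{n+1}\, w_0w_1\cdots w_n$, both of vertex-filtration degree $\sum_{j=0}^{n}|v_j|$.

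Finally, since $v_0v_1\cdots v_n\neq w_0w_1\cdots w_n$, these leading monomials do not cancel, so the leading part of $\tilde e(\hat\pi,n+1)-\tilde e(\hat\pi',n+1)\in R_V$ is exactly $c_{n+1}(v_0v_1\cdots v_n - w_0w_1\cdots w_n)$, which therefore lies in $gr R_V$; since $gr R_V$ is an $\mathbb{F}$-subspace and $c_{n+1}\neq 0$, the result follows. I expect the only real obstacle to be the bookkeeping in the middle step — confirming that $v_0\cdots v_{k-1}$ is genuinely the unique top-degree monomial of $\tilde e(\hat\pi,k)$ and that it is not destroyed in the difference — but this is a direct repetition of the argument already used for $m(v,k)$, together with the remark that distinct vertex paths from a common vertex produce distinct leading monomials.
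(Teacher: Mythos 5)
Your proof is correct and takes essentially the same route as the paper's: extend both vertex paths down to the unique minimal vertex $*$, note that the resulting paths are $\approx$-equivalent so that $\tilde e(\hat\pi,n+1)-\tilde e(\hat\pi',n+1)\in R_V$, and identify $v_0v_1\cdots v_n$ and $w_0w_1\cdots w_n$ as the unique top vertex-filtration monomials of the two terms. Your added care about the common nonzero coefficient and the non-cancellation of the two distinct leading monomials is a detail the paper leaves implicit, not a different argument.
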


\begin{proof}
Let $(v_0,v_1,\ldots,v_n)$ and $(w_0,w_1,\ldots,w_n)$ be vertex paths with $v_0=w_0$.  Since every vertex in $V_+$ has nonzero out-degree, we can extend each of these paths to a path that begins at $v_0$ and ends at $*$.  Thus we can define vertex paths
\[\pi_v=(v_0,v_1,\ldots, v_{|v_0|})\]
and
\[\pi_w=(w_0,w_1,\ldots,w_{|w_0|})\]
so that $v_{|v_0|}=w_{|w_0|}=*$.  Since $\pi_v\sim\pi_w$, we have
\[\tilde{e}(\pi_v,n+1)-\tilde{e}(\pi_w,n+1)\in R_V\]
We have
\[\tilde{e}(\pi_v,n+1)=\hspace{-24pt}\sum_{0\leq i_0<i_2<\ldots<i_{n}\leq|v_0|}\hspace{-24pt}
\left(v_{i_0}-v_{i_0+1}\right)\left(v_{i_1}-v_{i_1+1}\right)\ldots\left(v_{i_{n}}-v_{i_{n}+1}\right)\]
The leading term of this expression is $v_0v_1\ldots v_n$.  Similarly, the leading term of $\tilde{e}(\pi_w,n+1)$ is $w_0w_1\ldots w_n$.  It follows that
\[v_0v_1\ldots v_n-w_0w_1\ldots w_n\in gr R.\]
\end{proof}

\begin{cor}
\label{Rinclusion}
$R_{gr}\subseteq gr R_V$.
\end{cor}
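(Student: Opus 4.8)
The plan is to obtain the corollary as an immediate consequence of the proposition that precedes it, together with the general facts about associated graded ideals recalled in Section~\ref{gramma}. First I would recall that for any ideal $I$ of a free algebra $T(V)$, the object $gr I$ is itself a (graded) ideal of $T(V)$; applied to $I = R_V \subseteq T(V_+)$, this shows that $gr R_V$ is an ideal of $T(V_+)$. Next I would recall the definition of $R_{gr}$: it is the ideal of $T(V_+)$ generated by all elements of the form $v_0v_1\ldots v_n - w_0w_1\ldots w_n$, where $(v_0,v_1,\ldots,v_n)$ and $(w_0,w_1,\ldots,w_n)$ range over vertex paths with $v_0 = w_0$.

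With these two observations in place, the corollary follows in one line: the preceding proposition establishes that every generator $v_0v_1\ldots v_n - w_0w_1\ldots w_n$ of $R_{gr}$ already lies in $gr R_V$, and since $gr R_V$ is an ideal, it must contain the entire ideal that these elements generate, namely $R_{gr}$. Hence $R_{gr} \subseteq gr R_V$.

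The only point requiring attention is purely notational: the preceding proposition is phrased in terms of $gr R$, which here should be understood as $gr R_V$, the associated graded of the ideal $R_V$ of $T(V_+)$ with respect to the vertex filtration on $T(V_+)$ — this is what makes the displayed element live in $T(V_+)$ rather than in $T(E)$. Once this identification is made there is no genuine obstacle; the content of the theorem $gr A(\Gamma) \cong T(V_+)/R_{gr}$ will instead reside in the reverse inclusion $gr R_V \subseteq R_{gr}$, which this corollary does not address and which I would expect to be the substantive part of the argument to follow.
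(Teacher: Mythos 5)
Your proposal is correct and matches the paper's intent exactly: the corollary is stated without proof precisely because it follows immediately from the preceding proposition (whose generators are those of $R_{gr}$) together with the fact that $gr R_V$ is an ideal. Your notational remark that the proposition's ``$gr R$'' should be read as $gr R_V$ is also accurate, as is your observation that the substantive content of Theorem~\ref{PresentationOfGrA} lies in the reverse inclusion.
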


Corollary~\ref{Rinclusion} implies that the quotient map $\phi:T(V_+)\rightarrow T(V_+)/gr R_V$ factors uniquely through $T(V_+)/R_{gr}$, giving us
\[T(V_+)\overset{\phi^\prime}{\longrightarrow} T(V_+)/R_{gr}\overset{\phi^{\prime\prime}}{\longrightarrow} T(V_+)/gr R.\]
We will use this notation for these three maps in the discussion that follows.  We will also borrow some terminology from \cite{QnPaper}.

\begin{defn}
Let $a=v_1,\ldots v_l$ be a monomial in $T(V_+)$.  We define $\textbf{s}^a$, the \textit{skeleton} of $a$, to be the sequence of integers $(s^a_1,\ldots,s^a_t)$ satisfying 
\begin{itemize}
\item[(i)] $s^a_1=1.$
\item[(ii)] If $s^a_k<l+1$, then
\[s^a_{k+1}=\min\big(\left\{j>s^a_k : v_j\neq h(e_{v_{j-1}})\right\}\cup\{l+1\}\big).\]
\item[(iii)] $t=\min\{i: s^a_i=l+1\}$.
\end{itemize}
\end{defn}

\begin{defn}
Let $a$ be a monomial in $T(V_+)$, and let $\textbf{s}^a=\left(s^a_1,s^a_2,\ldots,s^a_t\right)$. We define $\mathbb{b}_a\in\mathbb{B}_1$ to be
\[\left(\left(v_{s^a_1},s^a_2-s^a_1\right),\left(v_{s^a_2},s^a_3-s^a_2\right),\ldots,\left(v_{s^a_{t-1}},s^a_t-s^a_{t-1}\right)\right)\]
\end{defn}
Notice that for each monomial $a$, we have $m(\mathbb{b}_a)=a$.

\begin{prop}
\label{badmonomials}
Let $\Gamma=(V,E)$ be a layered graph, and let $a$ be a monomial in $T(V_+)$.  Then there exists a monomial $a^\prime\in T(V_+)$ such that $\phi^\prime(a)=\phi^\prime(a^\prime)$, and $\mathbb{b}_{a^\prime}\in \mathbb{B}.$
\end{prop}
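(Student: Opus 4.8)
The goal is to show that every monomial $a$ in $T(V_+)$ can be rewritten, modulo $R_{gr}$, into a monomial $a'$ whose associated sequence $\mathbb{b}_{a'}$ lies in $\mathbb{B}$ --- i.e. a monomial whose skeleton ``blocks'' satisfy the non-covering condition between consecutive pairs. The plan is to argue by induction on some well-chosen complexity measure of $a$, and to use the defining relations of $R_{gr}$ (``path-swaps with a common starting vertex'') as rewriting moves that strictly decrease this measure whenever $\mathbb{b}_a\notin\mathbb{B}$.

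First I would unpack what it means for $\mathbb{b}_a\notin\mathbb{B}$. Writing $\mathbb{b}_a = ((b_1,k_1),\ldots,(b_t,k_t))$, failure means there is some $i$ with $(b_i,k_i)\forces(b_{i+1},k_{i+1})$, i.e. $b_i > b_{i+1}$ and $|b_i|-|b_{i+1}| = k_i$. By the construction of the skeleton, the $i$-th block of $a$ is exactly the length-$k_i$ initial segment of the distinguished vertex path $\pi_{b_i}$, so after $k_i$ steps that distinguished path sits at the vertex $h(e_{b_i}^{(k_i)})$ of rank $|b_i|-k_i = |b_{i+1}|$; and since $(b_i,k_i)\forces(b_{i+1},k_{i+1})$ there is an actual path of length $k_i$ from $b_i$ to $b_{i+1}$. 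The idea is then to use an $R_{gr}$-relation to replace the $i$-th block (the first $k_i$ vertices of $\pi_{b_i}$) by the first $k_i$ vertices of \emph{a} path from $b_i$ to $b_{i+1}$ --- both are vertex paths of the same length $k_i$ starting at the common vertex $b_i$, so the monomials they produce are congruent mod $R_{gr}$. After this swap the $i$-th block now ends exactly at $b_{i+1}$, which is precisely the head vertex one expects, so in the \emph{new} monomial the skeleton no longer breaks between position $s^a_{i+1}-1$ and $s^a_{i+1}$: the $i$-th and $(i+1)$-st blocks merge (and possibly absorb further blocks). Thus the number of skeleton blocks strictly drops.

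This suggests the clean induction: induct on $t$, the length of the skeleton $\mathbf{s}^a$ (equivalently the length of $\mathbb{b}_a$). If $\mathbb{b}_a\in\mathbb{B}$ we are done with $a'=a$. Otherwise pick the least $i$ witnessing a covering relation, perform the block-swap described above to produce $a''$ with $\phi'(a)=\phi'(a'')$ (because the swap lies in $R_{gr}$) and with $\mathbb{b}_{a''}$ strictly shorter --- here one has to be a little careful, since merging block $i$ with block $i+1$ could in principle create a new covering violation further left, but the skeleton length has genuinely decreased, so the induction hypothesis applies to $a''$ and yields the desired $a'$. One should double-check the base case $t\le 1$: a one-block monomial is an initial segment of a single $\pi_v$, and a zero/one-term sequence is vacuously in $\mathbb{B}$.

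The main obstacle I anticipate is the bookkeeping around how the skeleton of the swapped monomial actually behaves --- one needs to verify precisely that after replacing the $i$-th block by an initial segment of a path $b_i \to \cdots \to b_{i+1}$, the recomputed skeleton $\mathbf{s}^{a''}$ really does have fewer entries, rather than merely reshuffling. The key point to nail down is that the new $i$-th block ends at $b_{i+1} = h(e^{(k_i)})$ along the chosen path, so when the skeleton recursion reaches position $s^a_{i+1}$ it does \emph{not} break there (the vertex at that position is the distinguished-edge head of its predecessor), forcing blocks $i$ and $i+1$ to coalesce; everything strictly before block $i$ is untouched, so at worst the merged block extends rightward, and the total count goes down. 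A secondary subtlety is confirming that the swapped segment is a legitimate vertex path in $\Gamma$ (so the relation used is genuinely among the generators of $R_{gr}$), which follows since $(b_i,k_i)\forces(b_{i+1},k_{i+1})$ guarantees an honest length-$k_i$ path from $b_i$ to $b_{i+1}$ exists.
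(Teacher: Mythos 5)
Your overall strategy (use the path-swap generators of $R_{gr}$ to repair the first covering violation, and induct on a complexity measure) is the right one, and your congruence is valid: both $m(b_i,k_i)$ and the first $k_i$ vertices of a path from $b_i$ to $b_{i+1}$ are vertex paths of the same length starting at $b_i$, so they are equal under $\phi^\prime$. The gap is exactly at the point you flagged as "the key point to nail down," and your proposed resolution of it is incorrect. The skeleton is defined through the \emph{distinguished} edges: a break occurs at position $j$ precisely when $v_j\neq h(e_{v_{j-1}})$, so each block is forced to be an initial segment of the distinguished path $\pi_{v}$ of its first vertex. When you replace block $i$ by the first $k_i$ vertices $w_1,\ldots,w_{k_i}$ of an \emph{arbitrary} path from $b_i$ to $b_{i+1}$, nothing guarantees that $w_{j+1}=h(e_{w_j})$ along the way, so the inserted segment can itself shatter into many new blocks; and the break at position $s^a_{i+1}$ disappears only if the last edge of your chosen path happens to be the distinguished edge of $w_{k_i}$, which again need not hold. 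So the number of skeleton blocks can go up rather than down, and the induction on $t$ collapses. (Note also that you cannot simply choose the path to consist of distinguished edges: $\pi_{b_i}$ is the unique distinguished path out of $b_i$, and it need not pass through $b_{i+1}$ at all --- that is precisely why $\mathbb{b}_a\notin\mathbb{B}$ is a nontrivial condition.)

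The paper's move is designed to dodge exactly this. Rather than forcing the block to \emph{end at} $b_{i+1}$, it \emph{absorbs} $b_{i+1}$ into the block and replaces it by the distinguished continuation: since $(b_i,k_i)\models(b_{i+1},k_{i+1})$ supplies a genuine vertex path $w_1,\ldots,w_{h}$ from $b_i$ to $b_{i+1}$, both $m(b_i,k_i)\,b_{i+1}=m(b_i,k_i)\,w_h$ and $m(b_i,k_i+1)$ are congruent to $w_1\cdots w_h$ mod $R_{gr}$, hence to each other. The resulting monomial still has all of its blocks lying on distinguished paths, everything to the left of the repair is untouched, and the position $s^a_{r_a}$ of the first violation strictly increases; the induction is run on $l-s^a_{r_a}+1$ rather than on the number of blocks. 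If you want to keep an induction on $t$, the fix is to do the swap in bulk: replace the entire concatenation of blocks $i$ and $i+1$ (which, after interposing the path $w_1,\ldots,w_h$, is congruent to a single vertex path of $k_i+k_{i+1}$ vertices out of $b_i$) by $m(b_i,k_i+k_{i+1})$; this genuinely lowers the block count because the old break at $s^a_{i+1}$ is gone, no breaks are created inside the new distinguished segment, and everything from $s^a_{i+2}$ onward is unchanged.
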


\begin{proof}
Let $\Gamma=(V,E)$ be a uniform layered graph, and let $a=v_1,\ldots,v_l$ be a monomial in $T(V_+)$ with skeleton $\textbf{s}^a=(s^a_1,\ldots,s^a_t)$.  If $\mathbb{b}_a\notin\mathbb{B}$, then there exists $1<i<t$ such that
\[(v_{s^a_{i-1}},s^a_i-s^a_{i-1})\models(v_{s^a_i},s^a_{i+1}-s^a_i).\]
We define
\[r_a=\min\left(\left\{i : (v_{s^a_{i-1}},s^a_i-s^a_{i-1})\models(v_{s^a_i},s^a_{i+1}-s^a_i)\right\}\cup\{t\}\right)\]
We will induct on $l-s^a_{r_a}+1$. If $l-s^a_{r_a}+1=0$, then $\mathbb{b}_a\in \mathbb{B}$.  Now assume that $l-s^a_{r_a}>0$, and that the result holds for all monomials $a^\prime$ with $s^{a^\prime}_{r_{a^\prime}}>s^a_{r_a}$.

We have
\[a=v_1\ldots v_{\left(s^a_{(r_a-1)}-1\right)} m\left(v_{s^a_{\left(r_a-1\right)}},s^a_{r_a}-s^a_{\left(r_a-1\right)}\right)v_{s^a_{r_a}}\ldots v_l\]

Since we have
\[\left(v_{s^a_{\left(r_a-1\right)}},s^a_{r_a}-s^a_{\left(r_a-1\right)}\right)\models\left(v_{s^a_{r_a}},s^a_{{r_a}+1}-s^a_{r_a}\right),\]
we know that there exists a vertex path $(w_1,\ldots,w_h)$ with $w_1=v_{s^a_{(r_a-1)}}$ and $w_h~=~v_{s^a_{r_a}}$.

By the definition of $R_{gr}$, we have
\[\phi^\prime\left(m\left(v_{s^a_{(r_a-1)}},s^a_{r_a}-s^a_{(r_a-1)}\right)\right)=\phi^\prime\left(w_1\ldots w_{h-1}\right),\]
and 
\[\phi^\prime(w_1,\ldots,w_h)=\phi^\prime\left(m\left(v_{s^a_{(r_a-1)}},s^a_{r_a}-s^a_{(r_a-1)}+1\right)\right).\]
It follows that
\[\phi^\prime\left(m\left(v_{s^a_{(r_a-1)}},s^a_{r_a}-s^a_{(r_a-1)}\right)v_{s^a_{r_a}}\right)=\phi^\prime\left(m\left(v_{s^a_{(r_a-1)}},s^a_{r_a}-s^a_{(r_a-1)}+1\right)\right).\]

Thus we have $\phi^\prime(a)=\phi^\prime(a^\prime)$, where
\[a^\prime=v_1\ldots v_{(s^a_{r_a-1}-1)} m\left(v_{s^a_{(r_a-1)}},s^a_{r_a}-s^a_{(r_a-1)}+1\right)v_{({s^a_{r_a}}+1)}\ldots v_l\]

Say $a^\prime=v_1^\prime,\ldots v_l^\prime$, and $\textbf{s}^{a^\prime}=\left(s^{a^\prime}_1,\ldots,s^{a^\prime}_{t^\prime}\right)$.

We have 
\begin{itemize}
\item[(i)] $v_i=v_i^\prime$ for $1\leq i\leq {s^a_{r_a}}-1$.
\item[(ii)] $s^a_j=s^{a^\prime}_j$ for $j<r_a-1$.  
\item[(iii)] $s^{a^\prime}_{r_a}\geq s^a_{r_a}+1.$
\end{itemize}
Together, $(i)$ and $(ii)$ tell us that
\[\left(v_{s^{a^\prime}_{i-1}}^\prime,s^{a^\prime}_{i}-s^{a^\prime}_{i-1}\right)\not\models \left(v_{s^{a^\prime}_{i}}^\prime,s^{a^\prime}_{i+1}-s^{a^\prime}_i\right)\]

Thus by the inductive hypothesis there exists a monomial $a^{\prime\prime}\in T(V_+)$ with $\phi^\prime(a^\prime)=\phi^\prime(a^{\prime\prime})$ and $\mathbb{b}_{a^{\prime\prime}}\in\mathbb{B}$.  We have $\phi^\prime(a)=\phi^\prime(a^{\prime\prime})$, and so our proof is complete.
\end{proof}

\begin{prop}
For any distinct $\mathbb{b}$ and $\mathbb{b}^\prime$ in $\mathbb{B}$, we have $\phi^\prime(m(\mathbb{b}))\neq\phi^\prime(m(\mathbb{b}^\prime))$, and the set $\{\phi^\prime(m(\mathbb{b})) : \mathbb{b}\in \mathbb{B}\}$ is a basis for $T(V_+)/R_{gr}$.
\end{prop}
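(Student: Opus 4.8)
The plan is to deduce this proposition by comparing $T(V_+)/R_{gr}$ with $gr A(\Gamma) \cong T(V_+)/gr R_V$ through the factorization $T(V_+) \xrightarrow{\phi'} T(V_+)/R_{gr} \xrightarrow{\phi''} T(V_+)/gr R_V$ established after Corollary~\ref{Rinclusion}, and then using the known basis for $gr A(\Gamma)$ from Corollary~\ref{grbasis}. The key point is that $\phi'' \circ \phi' = \phi$, where $\phi$ is the quotient map appearing in Corollary~\ref{grbasis}, and $\phi(m(\mathbb{b}))$, $\mathbb{b}\in\mathbb{B}$, is a basis for $gr A(\Gamma)$.

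First I would show that the set $\{\phi'(m(\mathbb{b})) : \mathbb{b}\in\mathbb{B}\}$ spans $T(V_+)/R_{gr}$. Since the monomials $a = v_1\cdots v_l$ span $T(V_+)$, their images $\phi'(a)$ span the quotient; and by Proposition~\ref{badmonomials}, for each monomial $a$ there is a monomial $a'$ with $\phi'(a) = \phi'(a')$ and $\mathbb{b}_{a'}\in\mathbb{B}$. Since $m(\mathbb{b}_{a'}) = a'$ (noted right after the definition of $\mathbb{b}_a$), we get $\phi'(a) = \phi'(m(\mathbb{b}_{a'}))$ with $\mathbb{b}_{a'}\in\mathbb{B}$. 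Hence the proposed set spans.

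Next I would show linear independence of $\{\phi'(m(\mathbb{b})) : \mathbb{b}\in\mathbb{B}\}$, and at the same time that the $\phi'(m(\mathbb{b}))$ are pairwise distinct. Suppose $\sum_{\mathbb{b}} c_{\mathbb{b}}\, \phi'(m(\mathbb{b})) = 0$ in $T(V_+)/R_{gr}$, a finite sum over distinct elements of $\mathbb{B}$. Apply $\phi''$: since $\phi'' \circ \phi' = \phi$, this gives $\sum_{\mathbb{b}} c_{\mathbb{b}}\, \phi(m(\mathbb{b})) = 0$ in $gr A(\Gamma)$. But by Corollary~\ref{grbasis} the family $\{\phi(m(\mathbb{b})) : \mathbb{b}\in\mathbb{B}\}$ is a basis for $gr A(\Gamma)$, so all $c_{\mathbb{b}} = 0$. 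This proves linear independence. Distinctness follows the same way: if $\phi'(m(\mathbb{b})) = \phi'(m(\mathbb{b}'))$ for $\mathbb{b}\neq\mathbb{b}'$ in $\mathbb{B}$, applying $\phi''$ gives $\phi(m(\mathbb{b})) = \phi(m(\mathbb{b}'))$, contradicting Corollary~\ref{grbasis}.

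I do not anticipate a serious obstacle here; the proposition is essentially a formal consequence of the spanning statement (Proposition~\ref{badmonomials}) combined with the already-established basis for $gr A(\Gamma)$ (Corollary~\ref{grbasis}), linked by the factorization $\phi = \phi''\circ\phi'$. The only point requiring a little care is making sure Proposition~\ref{badmonomials} is stated for general layered graphs (its proof opens by saying "let $\Gamma$ be a uniform layered graph," but nothing in the argument uses uniformity, so I would treat it as holding for arbitrary layered graphs, consistent with its statement). A pleasant byproduct of this argument is that $\phi''$ then carries the spanning set $\{\phi'(m(\mathbb{b}))\}$ to a basis of $gr A(\Gamma)$ and is injective on its span, so in fact $\phi''$ is an isomorphism — which is exactly Theorem~\ref{PresentationOfGrA}, and is presumably the next step in the paper.
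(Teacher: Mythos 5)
Your proposal is correct and follows essentially the same route as the paper: spanning via Proposition~\ref{badmonomials} applied to the monomial spanning set of $T(V_+)$, and distinctness/linear independence by pushing forward along $\phi''$ (using $\phi=\phi''\circ\phi'$) to the known basis of $gr\,A(\Gamma)$ from Corollary~\ref{grbasis}. Your closing observation that this immediately yields Theorem~\ref{PresentationOfGrA} is also exactly how the paper proceeds in the corollary that follows.
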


\begin{proof}
For any $\mathbb{b}\in \mathbb{B}$, we have $\phi(m(\mathbb{b}))=\phi^{\prime\prime}\circ\phi^\prime(m(\mathbb{b}))$.  Thus the elements of 
\[\{\phi^\prime(m(\mathbb{b})) : \mathbb{b}\in \mathbb{B}\}\]
are distinct and linearly independent by Corollary~\ref{grbasis}.  Since the set
\[\{a\in T(V_+) : a\text{ a monomial}\}\]
spans $T(V_+)$, it follows that the set
\[\{\phi^\prime(a) : a\text{ a monomial}\}\]
spans $T(V_+)/R_{gr}$.  Thus the set 
\[\{\phi^\prime(m(\mathbb{b})) : \mathbb{b}\in \mathbb{B}\}\]
spans $T(V_+)/R_{gr}$ as a consequence of Proposition~\ref{badmonomials}.  This gives us our result.
\end{proof}

\begin{cor}
$gr R_V\subseteq R_{gr}$
\end{cor}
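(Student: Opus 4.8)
The plan is to upgrade the two preceding basis results into the reverse inclusion via a short linear-algebra argument, which simultaneously finishes the proof of Theorem~\ref{PresentationOfGrA}. Recall the factorization
\[T(V_+)\overset{\phi^\prime}{\longrightarrow} T(V_+)/R_{gr}\overset{\phi^{\prime\prime}}{\longrightarrow} T(V_+)/gr R_V,\]
where $\phi=\phi^{\prime\prime}\circ\phi^\prime$ is the canonical quotient map, so $\ker\phi=gr R_V$, while $\ker\phi^\prime=R_{gr}$. To prove $gr R_V\subseteq R_{gr}$ it suffices to show that $\phi^{\prime\prime}$ is injective: then $\ker\phi=\ker\phi^\prime$, i.e. $gr R_V=R_{gr}$, which gives the desired inclusion (indeed an equality).

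First I would note that $\phi^{\prime\prime}$ is surjective, since $\phi$ is. Then I would invoke the preceding proposition, which says that $\{\phi^\prime(m(\mathbb{b})):\mathbb{b}\in\mathbb{B}\}$ is a basis of $T(V_+)/R_{gr}$ with distinct elements for distinct $\mathbb{b}$, together with Corollary~\ref{grbasis}, which says that $\{\phi(m(\mathbb{b})):\mathbb{b}\in\mathbb{B}\}=\{\phi^{\prime\prime}(\phi^\prime(m(\mathbb{b}))):\mathbb{b}\in\mathbb{B}\}$ is a basis of $gr A(\Gamma)\cong T(V_+)/gr R_V$, again with distinct elements for distinct $\mathbb{b}$. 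Hence $\phi^{\prime\prime}$ restricts to a bijection from a basis of its domain onto a basis of its codomain, so it is a linear isomorphism; in particular it is injective.

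Therefore $\ker\phi=\ker\phi^\prime$, so $gr R_V=R_{gr}$ and a fortiori $gr R_V\subseteq R_{gr}$. Combined with Corollary~\ref{Rinclusion}, this yields $gr R_V=R_{gr}$ and hence $gr A(\Gamma)=T(V_+)/gr R_V\cong T(V_+)/R_{gr}$, completing Theorem~\ref{PresentationOfGrA} as well.

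Since the substantive content has already been established in Proposition~\ref{badmonomials} and the two basis statements, I do not expect any genuine obstacle in this last step. The only point requiring a moment's care is verifying that $\phi^{\prime\prime}$ carries the chosen basis \emph{bijectively}, not merely surjectively, onto the target basis — which is precisely what the distinctness clauses ($\mathbb{b}\neq\mathbb{b}^\prime\Rightarrow$ distinct images) in the cited results provide.
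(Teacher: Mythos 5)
Your argument is correct and is essentially identical to the paper's: both show that $\phi^{\prime\prime}$ carries the basis $\{\phi^\prime(m(\mathbb{b})):\mathbb{b}\in\mathbb{B}\}$ bijectively onto the basis $\{\phi(m(\mathbb{b})):\mathbb{b}\in\mathbb{B}\}$ of $gr\,A(\Gamma)$, conclude that $\phi^{\prime\prime}$ is an isomorphism, and deduce $gr\,R_V\subseteq R_{gr}$. No issues.
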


\begin{proof}
For any $\mathbb{b}\in\mathbb{B}$, we have
\[\phi^{\prime\prime}\left(\phi^\prime\left(m(\mathbb{b})\right)\right)=\phi(m(\mathbb{b})).\]
Since $\phi^{\prime\prime}$ maps a basis of $T(V_+)/R_{gr}$ bijectively onto a basis of $gr A(\Gamma)$, it follows that $\phi^{\prime\prime}$ is a bijection, and thus an isomorphism.  The result follows.
\end{proof}

This final corollary gives us Theorem~\ref{PresentationOfGrA}.

\section{Quadratic Algebras and Uniform Layered Graphs}
\label{section:Uniform}

We begin this section by recalling the following definitions:

\begin{defn}
An algebra $A$ is called \textbf{quadratic} if $A\cong T(W)/\langle R\rangle$, where $W$ is a finite-dimensional vector space, and $\langle R\rangle $ is an ideal generated by some subspace $R$ of $W\otimes W$.
\end{defn}

\begin{defn}
For each $v\in V_+$, we define an equivalence relation $\sim_v$ on $S(v)$ to be the transitive closure of the relation $\approx$ on $S(v)$ given by $w\approx_v u$ whenever $S(w)\cap S(u)\neq\emptyset$.  A graph $\Gamma$ is said to be a \textbf{uniform layered graph} if for any $v\in V_{\geq2}$, all elements of $S(v)$ are equivalent under $\sim_v$.
\end{defn}

In \cite{KozulPaper}, Retakh, Serconek, and Wislon prove that $A(\Gamma)$ is quadratic if $\Gamma$ is uniform.  In \cite{Shelton}, Shelton proves that $gr A(\Gamma)$ is quadratic if and only if $\Gamma$ is uniform.  In this case, $gr A(\Gamma)$ has a particularly nice presentation.

\begin{prop}
\label{gruniform}
If $\Gamma$ is a uniform layered graph, then
\[gr A(\Gamma)\cong T(V_+)/\langle v(u-w) : u,w\in S(v)\rangle\]
\end{prop}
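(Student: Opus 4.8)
The plan is to deduce this from Theorem~\ref{PresentationOfGrA}, which already gives $gr A(\Gamma)\cong T(V_+)/R_{gr}$, by showing that when $\Gamma$ is uniform the ideal $R_{gr}$ coincides with the quadratic ideal $J=\langle v(u-w) : v\in V_+,\ u,w\in S(v)\rangle$. So the whole proof reduces to proving $R_{gr}=J$ as ideals of $T(V_+)$.

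First I would establish the easy inclusion $J\subseteq R_{gr}$. For $v\in V_+$ and $u,w\in S(v)$, both $(v,u)$ and $(v,w)$ are edges, so $(v,u)$ and $(v,w)$ are vertex paths of length $2$ with the same initial vertex $v$; by the definition of $R_{gr}$, $vu-vw\in R_{gr}$, hence every generator of $J$ lies in $R_{gr}$ and so $J\subseteq R_{gr}$. The reverse inclusion $R_{gr}\subseteq J$ is the substantive part. It suffices to show that for any two vertex paths $(v_0,v_1,\ldots,v_n)$ and $(w_0,w_1,\ldots,w_n)$ with $v_0=w_0$, we have $v_0v_1\cdots v_n - w_0w_1\cdots w_n\in J$. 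Working in the quotient $T(V_+)/J$, this says the image of $v_0v_1\cdots v_n$ depends only on $v_0$ (and $n$). Write $\equiv$ for congruence mod $J$. The relations $vu\equiv vw$ for $u,w\in S(v)$ say exactly that in the quotient, left-multiplication by a vertex $v$ "forgets" which element of $S(v)$ comes next: $v_0v_1\cdots v_n \equiv v_0 v_1' v_2\cdots v_n$ for any $v_1'\in S(v_0)$. The difficulty is that changing $v_1$ also changes which vertices are admissible in position $2$, so one cannot simply rewrite positions one at a time independently; this is where uniformity enters and is the main obstacle.

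To handle this I would argue by induction on $n$, and the key lemma is a \emph{connectivity} statement coming from the definition of a uniform layered graph: for $v\in V_{\geq 2}$, all elements of $S(v)$ are equivalent under $\sim_v$, the transitive closure of $u\approx_v u'$ iff $S(u)\cap S(u')\neq\emptyset$. The induction claim is: for every $v\in V_+$ and every $m\leq |v|$, all monomials of the form $v x_1 x_2\cdots x_m$ arising from vertex paths $(v,x_1,\ldots,x_m)$ are congruent mod $J$. For $m=1$ this is immediate ($v x \equiv v x'$ for $x,x'\in S(v)$ by definition of $J$). For the inductive step, given two vertex paths $(v,x_1,\ldots,x_m)$ and $(v,y_1,\ldots,y_m)$: if $x_1=y_1$, apply the inductive hypothesis to the vertex $x_1\in V_+$ (whose rank is one less) to get $x_1 x_2\cdots x_m \equiv x_1 y_2\cdots y_m$, then left-multiply by $v$. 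If $x_1\neq y_1$, use uniformity: since $|v|\geq 2$, $x_1$ and $y_1$ are connected by a chain $x_1 = u_0 \approx_v u_1 \approx_v \cdots \approx_v u_r = y_1$ in $S(v)$, so it is enough to treat the case where $S(x_1)\cap S(y_1)\neq\emptyset$, say $z\in S(x_1)\cap S(y_1)$. Then $(v,x_1,z,\ldots)$ and $(v,y_1,z,\ldots)$ can be compared: $v x_1 x_2\cdots x_m \equiv v x_1 z w_3\cdots w_m$ (inductive hypothesis applied at $x_1$, extending $(x_1,z)$ to a vertex path of length $m$, which is possible since every vertex in $V_+$ has nonzero out-degree), and $v x_1 z w_3\cdots w_m \equiv v y_1 z w_3\cdots w_m$ (one application of a generator $v x_1 - v y_1$ times $z w_3\cdots w_m$ on the right — here I use that $x_1,y_1\in S(v)$ so $v x_1 \equiv v y_1$), and finally $v y_1 z w_3\cdots w_m \equiv v y_1 y_2\cdots y_m$ again by the inductive hypothesis applied at $y_1$. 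Chaining these, $v x_1\cdots x_m \equiv v y_1\cdots y_m$, completing the induction; taking $m=n$ and noting $v_0=w_0$ gives $v_0v_1\cdots v_n\equiv w_0 w_1\cdots w_n$, i.e. $R_{gr}\subseteq J$.

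Combining the two inclusions gives $R_{gr}=J$, so $gr A(\Gamma)\cong T(V_+)/R_{gr}=T(V_+)/\langle v(u-w):u,w\in S(v)\rangle$, which is the claimed presentation. I expect the bookkeeping in the inductive step — in particular making sure the auxiliary vertex paths of the right length exist (which is guaranteed because $S(x)\neq\emptyset$ for every $x\in V_+$) and that the chain-of-$\approx_v$ reduction is applied only when $|v|\geq 2$ (the case $|v|=1$ has $m\leq 1$ and is trivial) — to be the only delicate point; the algebra itself is routine once the uniformity-driven connectivity argument is set up correctly.
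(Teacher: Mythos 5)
Your proposal is correct and follows essentially the same route as the paper: both reduce the statement to showing that monomials coming from vertex paths with a common starting vertex are congruent modulo the quadratic ideal, using the down-up/connectivity consequence of uniformity to change one vertex at a time. The only organizational difference is that you re-derive that connectivity statement inline by induction on path length, whereas the paper delegates it to Proposition~\ref{downup} and Corollary~\ref{UniformCor} and then applies the one-vertex-swap relation directly.
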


To prove this, we will find it useful to consider the following equivalent conditions for uniformity:

\begin{prop}
\label{downup}
Let $\Gamma$ be a layered graph.  Then the following conditions are equivalent:

 \begin{itemize}
 \item[(i)] $\Gamma$ is uniform.
 \item[(ii)] For any $v,x,x^\prime$ with $x\lessdot v$ and $x^\prime\lessdot v$, there exist seqences of vertices $x_0,\ldots,x_s$ and $y_1,\ldots, y_s$ such that
\begin{itemize}
\item[(a)] $x=x_0$ and $x^\prime=x_s$.
\item[(b)] For all $i$ such that $0\leq i\leq s$ we have $x_i\lessdot v$.
\item[(c)] For all $i$ such that $1\leq i<s,$ we have $y_i\lessdot x_{i-1}$ and $y_i\lessdot x_{i}$.
\end{itemize}
\item[(iii)] for any two vertex paths $(v_1,v_2,\ldots,v_n)$ and $(w_1,w_2,\ldots,w_n)$ with $v_1=w_1$ and $v_n=w_n=*$, there exists a sequence of vertex paths $\pi_1,\pi_2,\ldots,\pi_k$, each path beginning at $v_1$ and ending at $*$, such that for $1\leq i<k$, the vertex paths $\pi_i$ and $\pi_{i+1}$ differ by at most one vertex.
\end{itemize}
\end{prop}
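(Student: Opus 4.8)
The plan is to prove the cycle $(i)\Rightarrow(ii)\Rightarrow(iii)\Rightarrow(i)$, where in fact $(i)\Leftrightarrow(ii)$ will be immediate from the definitions. Throughout I use the standing hypothesis of this part of the paper that $\Gamma$ has a unique minimal vertex $*$, so that every vertex lies on a vertex path ending at $*$ and, since consecutive vertices of a vertex path are covering relations, both endpoints of a vertex path are determined by its starting vertex.

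\textbf{The equivalence $(i)\Leftrightarrow(ii)$.} I would simply unwind the definition of $\sim_v$. The relation $\approx_v$ on $S(v)$ puts $u\approx_v w$ exactly when $S(u)\cap S(w)\neq\emptyset$, i.e. exactly when there is a vertex $y$ with $y\lessdot u$ and $y\lessdot w$, and $\sim_v$ is its transitive closure. So ``all of $S(v)$ lies in a single $\sim_v$-class'' says precisely that any $x,x'\in S(v)$ are joined by a chain $x=x_0,x_1,\dots,x_s=x'$ of vertices of $S(v)$ together with witnesses $y_i$ satisfying $y_i\lessdot x_{i-1}$ and $y_i\lessdot x_i$; that is exactly condition $(ii)$ (for $v\in V_1$ the chain of length $s\le 1$ works trivially, matching the fact that uniformity imposes no constraint below $V_2$). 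Thus this equivalence needs only a sentence or two.

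\textbf{The implication $(ii)\Rightarrow(iii)$.} This is the substantive step, and I would prove it by induction on $r=|v_1|$ via the claim $C(r)$: for every vertex $v$ with $|v|=r$, any two vertex paths from $v$ to $*$ are joined by a sequence of vertex paths from $v$ to $*$ whose consecutive members differ in at most one vertex. The cases $r\le 2$ are trivial: the only path from $*$ is $(*)$; a path from a rank-$1$ vertex to $*$ is forced; and two paths $(v,a,*)$, $(v,a',*)$ from a rank-$2$ vertex differ in a single vertex. For $r\ge 3$, fix $v$ with $|v|=r$ and paths $\pi=(v,a,\dots,*)$ and $\pi'=(v,a',\dots,*)$. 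If $a=a'$, apply $C(r-1)$ to the two tails (vertex paths from the rank-$(r-1)$ vertex $a$ to $*$) and prepend $v$ to each path of the resulting connecting sequence; every such path is legitimate since its second vertex stays $a\in S(v)$, and consecutive paths still differ in one vertex. If $a\neq a'$, apply $(ii)$ at $v$ (legitimate since $|v|=r\ge 2$) to get a chain $a=x_0,x_1,\dots,x_s=a'$ in $S(v)$ with witnesses $y_i\lessdot x_{i-1}$, $y_i\lessdot x_i$, and extend each $y_i$ to a vertex path $\rho_i=(y_i,\dots,*)$. Starting from $\pi$, iterate over $i=1,\dots,s$: (a) using $C(r-1)$ on the tail below the current second vertex $x_{i-1}$, connect the current path to $(v,x_{i-1},y_i,\dots,*)$ (legitimate since $y_i\in S(x_{i-1})$); (b) perform the single move replacing the second vertex $x_{i-1}$ by $x_i$, reaching $(v,x_i,y_i,\dots,*)$ (legitimate since $y_i\in S(x_i)$, so this is again a vertex path). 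After all $s$ rounds the current path is $(v,a',y_s,\dots,*)$, and one more application of $C(r-1)$ to the tails connects it to $\pi'$. Taking $r=|v_1|$ yields $(iii)$.

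\textbf{The implication $(iii)\Rightarrow(i)$.} Given $v\in V_{\geq 2}$ and $x,x'\in S(v)$, choose vertex paths $\pi=(v,x,\dots,*)$ and $\pi'=(v,x',\dots,*)$ and, by $(iii)$, a connecting sequence $\pi=\pi_1,\dots,\pi_k=\pi'$ of vertex paths from $v$ to $*$ with consecutive members differing in one vertex. Write $a_j$ for the second vertex of $\pi_j$, so $a_1=x$, $a_k=x'$, and each $a_j\in S(v)$. If $\pi_j$ and $\pi_{j+1}$ differ in the second coordinate they agree in the third coordinate $c$ (which exists since $|v|\ge 2$), so $c\in S(a_j)\cap S(a_{j+1})$ and $a_j\approx_v a_{j+1}$; otherwise $a_j=a_{j+1}$. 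Deleting repetitions, $a_1,\dots,a_k$ is a chain of $\approx_v$-steps from $x$ to $x'$, so $x\sim_v x'$, proving $(i)$.

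\textbf{Main obstacle.} The delicate point is the induction in $(ii)\Rightarrow(iii)$: the claim $C(r)$ must be stated so that prepending $v$ to a connecting sequence supplied by $C(r-1)$ again yields valid vertex paths with single-vertex transitions, and one must check that the move flipping the second vertex from $x_{i-1}$ to $x_i$ is legal — which is exactly what the common child $y_i$ provides. The remaining verifications are routine bookkeeping.
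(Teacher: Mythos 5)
Your proof is correct and follows essentially the same route as the paper: (i)$\Leftrightarrow$(ii) by unwinding the definition of $\sim_v$, (ii)$\Rightarrow$(iii) by induction (yours on the rank of the starting vertex, the paper's on the number of vertices in the path — the same induction for paths ending at $*$), splicing the down-up sequence at the top vertex into second-coordinate flips legitimized by the common children $y_i$ and tail-connections supplied by the inductive hypothesis, and the converse by reading off the second and third vertices of the connecting sequence of paths. No substantive difference.
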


\begin{figure}[h]
\caption{Two vertices $u$ and $w$ below $v$, connected by a down-up sequence as described in condition (ii) of Proposition~\ref{downup}.}
\begin{center}
\includegraphics[scale=.2]{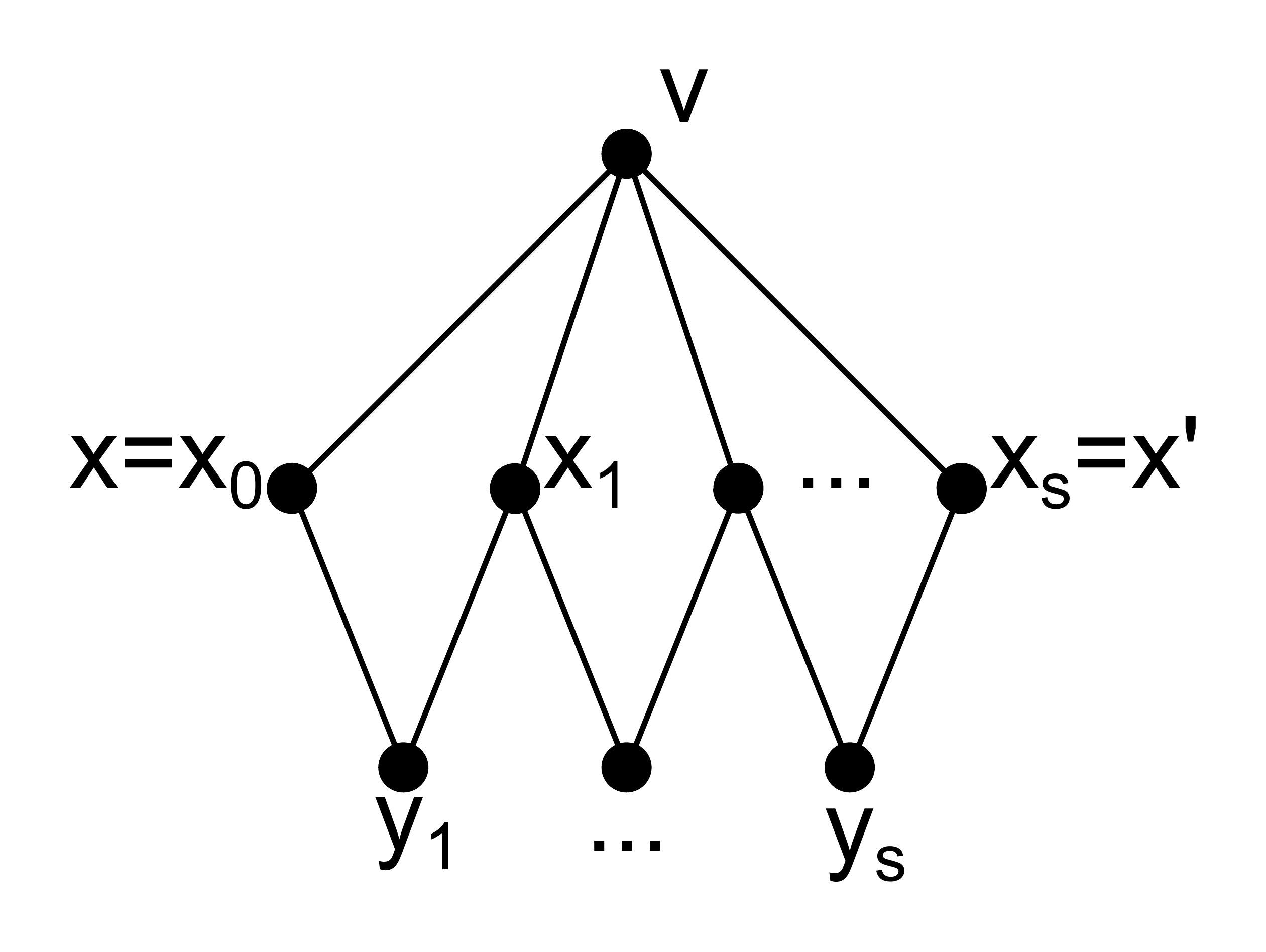}
\end{center}
\end{figure}

\begin{proof}

The fact that (i) and (ii) are equivalent follows directly from the definition of the relation $\sim_v$. A pair of sequences of vertices that satisfy (a)-(c) is sometimes referred to as a ``down-up sequence.''  We will proceed by showing that conditions (ii) and (iii) are equivalent.

To show that (ii) implies (iii), we will induct on $n$, the number of vertices in the paths.  Condition (iii) clearly holds if $n=1,2$, establishing our base case.  Now we let $n>2$, and assume that (iii) holds for all paths with fewer than $n$ vertices.  Let $(v_1,v_2,\ldots,v_n)$ and $(w_1,w_2,\ldots,w_n)$ be paths satisfying $v_1=w_1$ and $v_n=w_n=*$.  Condition (ii) states that there exists a down-up sequence, consisting of two sequences of vertices $x_0,\ldots,x_s$ and $y_1,\ldots,y_s$ satisfying
\begin{itemize}
\item[(a)] $v_2=x_0$ and $w_2=x_s$.
\item[(b)] For all $i$ such that $0\leq i\leq s$ we have $x_i\lessdot v$.
\item[(c)] For all $i$ such that $1\leq i<s,$ we have $y_i\lessdot x_{i-1}$ and $y_i\lessdot x_{i}$.
\end{itemize}
Let $y_0=v_3$, and let $y_{s+1}=w_3$.  For $0\leq i\leq s$, our induction hypothesis tells us that there exists a sequence of vertex paths $\pi_1^i, \pi_2^i,\ldots,\pi_{k_i}^i$ such that
\begin{itemize}
\item[($\alpha$)] For $0\leq i\leq s$, $\pi_1^i=x_i\wedge\pi_{y_i}$ and $\pi_{k_i}^i=x_i\wedge\pi_{y_{i+1}}$.
\item[($\beta$)] For $1\leq j<k_i$, the paths $\pi_j^i$ and $\pi_{j+1}^i$ differ by only one vertex.
\end{itemize}
The path-sequence that we wish to obtain is given by
\[v_1\wedge\pi_1^1,(v_1,x_1)\wedge\pi_2^1,\ldots,(v_1,x_1)\wedge\pi_{k_1}^1,(v_1,x_2)\wedge\pi_1^2,
\ldots, (v_1,x_2)\wedge\pi_{k_2}^2,\ldots\]\[\ldots,(v_1,x_s)\wedge\pi_1^s,\ldots,(v_1,x_s)\wedge\pi_{k_s}^s=(w_1,w_2,\ldots,w_n).\]

To prove that (iii) implies (ii), suppose that for any pair of vertex paths $(v_1,\ldots,v_n)$ and $(w_1,\ldots,w_n)$ satisfying $v_1=w_1$ and $v_n=w_n=*$, we have a sequence of vertex paths $\pi_1,\ldots\pi_k$ beginning at $v_1$ and ending at $*$ such that for $1\leq i<k$, the vertex paths $\pi_i$ and $\pi_{i+1}$ differ at at most one vertex.  

Let $v, x,$ and $x^\prime$ be vertices in $\Gamma$ such that $x\lessdot v$ and $x^\prime\lessdot v$.  Then $v\wedge \pi_x$ and $v\wedge \pi_{x^\prime}$ are paths which both start at $v$ and end at $*$.  Thus there exists a sequence of paths $\pi_1,\ldots,\pi_k$ beginning at $v$ and ending at $*$, and differing in each step by at most one vertex, such that $\pi_1=v\wedge \pi_x$ and $\pi_k=v\wedge \pi_{x^\prime}$.  For $1\leq i\leq k$, let $x_i$ be the second vertex on path $\pi_i$, and let $y_i$ be the third vertex on path $\pi_i$.  Then the sequences $x_1,\ldots,x_k$ and $y_2,\ldots, y_{k-1}$ satisfy (a)-(c) from condition (ii), and thus constitute a down-up sequence.
\end{proof}

We will also find the following corollary useful.

\begin{cor}
\label{UniformCor}
Let $\Gamma$ be a uniform layered graph with unique minimal vertex $*$.  Then for any two vertex paths $(v_1,v_2,\ldots,v_n)$ and $(w_1,w_2,\ldots,w_n)$ with $v_1=w_1$, there exists a sequence of vertex paths $\pi_1,\pi_2,\ldots,\pi_k$, each path beginning at $v_1$, such that for $1\leq i<k$, the vertex paths $\pi_i$ and $\pi_{i+1}$ differ by at most one vertex.
\end{cor}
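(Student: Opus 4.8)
The plan is to bootstrap from condition (iii) of Proposition~\ref{downup}: extend the two given vertex paths downward until they terminate at the unique minimal vertex $*$, connect the extensions there using uniformity, and then truncate the resulting sequence back to length $n$.

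First I would extend. Since $\Gamma$ has unique minimal vertex $*$ and every vertex of $V_+$ has nonempty out-neighborhood, following distinguished edges from any vertex terminates at $*$; write $\rho_u$ for the distinguished vertex path from $u$ down to $*$ (so $\rho_*=(*)$). Form the vertex path $\hat\pi_v$ by appending $\rho_{v_n}$ to $(v_1,\ldots,v_n)$ along their common vertex $v_n$, and likewise $\hat\pi_w$ from $(w_1,\ldots,w_n)$ and $\rho_{w_n}$. Because each edge of a layered graph drops the rank by exactly one, $v_n$ and $w_n$ both have rank $|v_1|-(n-1)$, so $\rho_{v_n}$ and $\rho_{w_n}$ each have $|v_1|-n+2$ vertices; hence $\hat\pi_v$ and $\hat\pi_w$ each have exactly $|v_1|+1$ vertices, both begin at $v_1=w_1$, and both end at $*$.

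Next I would apply Proposition~\ref{downup} (uniformity implies its condition (iii)) to the pair $\hat\pi_v,\hat\pi_w$: this yields a sequence of vertex paths $\hat\pi_1,\ldots,\hat\pi_k$, all from $v_1$ to $*$, with $\hat\pi_1=\hat\pi_v$, $\hat\pi_k=\hat\pi_w$, and $\hat\pi_j$ differing from $\hat\pi_{j+1}$ in at most one vertex for each $j$. Finally, let $\pi_j$ be the vertex path formed by the first $n$ vertices of $\hat\pi_j$. Each $\pi_j$ is a vertex path (a prefix of a vertex path is one), begins at $v_1$, and satisfies $\pi_1=(v_1,\ldots,v_n)$ and $\pi_k=(w_1,\ldots,w_n)$; moreover, since $\hat\pi_j$ and $\hat\pi_{j+1}$ disagree in at most one position, their length-$n$ prefixes disagree in at most one position as well (if the disagreement lies beyond position $n$, the prefixes coincide). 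Discarding any immediate repetitions gives the required sequence.

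The only point needing care is the length bookkeeping in the extension step: one must confirm that $\hat\pi_v$ and $\hat\pi_w$ reach $*$ after the same number of vertices, so that Proposition~\ref{downup}(iii) genuinely applies. This is immediate from the fact that the rank drops by one along every edge of a layered graph, so there is no real obstacle; everything else (prefixes of vertex paths are vertex paths, truncation never increases the number of differing positions) is routine.
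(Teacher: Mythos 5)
Your argument is correct and is essentially the intended one: the paper states Corollary~\ref{UniformCor} without proof as an immediate consequence of Proposition~\ref{downup}(iii), and your extend-to-$*$, apply-(iii), then-truncate argument is precisely the routine verification being elided, with the length bookkeeping (both extensions reach $*$ after $|v_1|+1$ vertices because every edge drops the rank by one) handled correctly.
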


\begin{proof}[Proof of Proposition~\ref{gruniform}]
We know that $gr A(\Gamma)\cong T(V_+)/ gr R_V$, and that $gr R_V$ is generated by all elements of the form 
\[v_0\ldots v_n-w_0\ldots w_n,\]
where $(v_0,\ldots,v_n)$ and $(w_0,\ldots,w_n)$ are vertex paths with $v_0=w_0$.  Thus
\[\{v(u-w) : u,w\in S(v)\}\subseteq gr R_V,\]
so the map $\phi: T(V_+)\rightarrow T(V_+)/gr R_V$ factors through 
\[T(V_+)/\langle v(u-w) : u,w\in S(v)\rangle,\]
giving us
\[T(V_+)\overset{\phi^\prime}{\longrightarrow} T(V_+)/\langle v(u-w) : u,w\in S(v)\rangle\overset{\phi^{\prime\prime}}{\longrightarrow} T(V_+)/gr R_V.\]
It will suffice to show that $\ker(\phi)\subseteq\ker(\phi^\prime)$.  To do this, we must show that for any two vertex paths $(v_0,\ldots,v_n)$ and $(w_0,\ldots,w_n)$ with $v_0=w_0$, we have
\[\phi^\prime(v_0\ldots v_n)=\phi^\prime(w_0\ldots w_n)\]
Let $(v_0,\ldots,v_{i-1},v_i,v_{i+1},\ldots v_n)$ and $(v_0,\ldots,v_{i-1},v_i^\prime,v_{i+1},\ldots v_n)$ be two vertex paths differing by one vertex.  Since $v_i,v_i^\prime\in S(v_{i-1})$, we have
\[v_1\ldots v_{i-2}(v_{i-1}(v_i-v_i^\prime))v_{i+1}\ldots v_n\in \ker(\phi^\prime),\]
 and thus
\[\phi^\prime(v_1\ldots v_{i-1}v_iv_{i+1}\ldots v_n)=\phi^\prime(v_1\ldots v_{i-1}v_i^\prime v_{i+1}\ldots v_n),\in \ker(\phi^\prime).\]
This in combination with Corollary~\ref{UniformCor} and the uniformity of $\Gamma$ gives us our result.
\end{proof}

\subsection{$B(\Gamma)$, the Quadratic Dual of $gr A(\Gamma)$}

The ideal $R_V$ of $T(V_+)$ is often difficult to describe, making the algebra $A(\Gamma)$ hard to work with.  We will find it useful to consider a second algebra, defined as follows:
\begin{defn}
If $\Gamma$ is a layered graph, then $B(\Gamma)=T(V_+)/R_B$, where $R_B$ is the ideal generated by
\[\{vw: v,w\in V_+, v\ngtrdot w\}\cup\left\{v\sum_{v\gtrdot w}w : v\in V_+\right\}.\]
\end{defn}

\begin{figure}[h]
\caption{If this figure is a part of $\Gamma$, then in $B(\Gamma)$ we will have $va=0$, $ve=0$, and $v(b+c+d)=0$.  We will also have $v^2=0$, and $vw=0$ for any vertex $w$ not included in this figure.}
\label{singlevertexpic}
\begin{center}
\includegraphics[scale=.2]{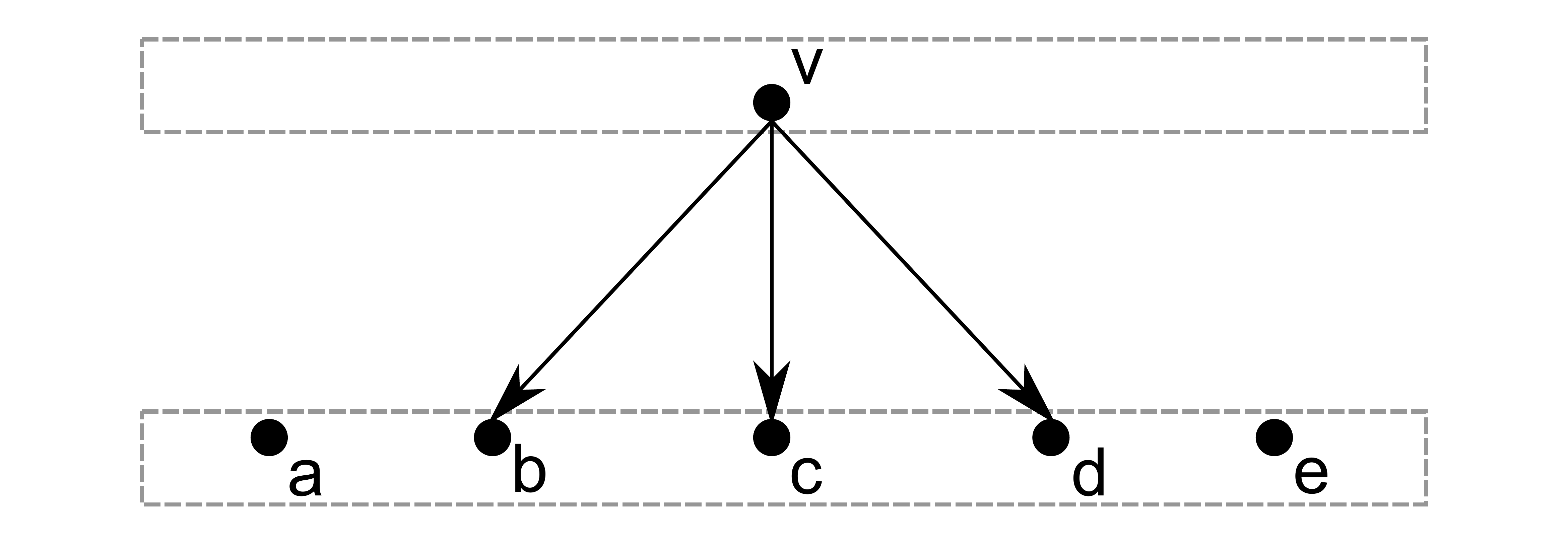}
\end{center}
\end{figure}

If $\Gamma$ is uniform, the algebra $B(\Gamma)$ can be calculated directly from $A(\Gamma)$.  To explain this relationship, we will recall the following definitions, as presented in \cite{QuadraticAlgebras}:

\begin{defn}
If $R$ is a subspace of $W\otimes W$, then $R^\perp$ is the ideal in $T(W^*)$ generated by the set of elements $x\in (W\otimes W)^*$ such that for all $y\in W\otimes W$, we have $\langle x,y\rangle=0$.
\end{defn}

\begin{defn}
Given a quadratic algebra $A\cong T(W)/R$, the quadratic dual $A^!$ is defined to be the algebra $T(W^*)/R^\perp$.
\end{defn}

In particular, when $\Gamma$ is uniform,  we can calculate $B(\Gamma)$ directly from $A(\Gamma)$ by way of the following proposition:

\begin{prop}
\label{bgamma}
Let $\Gamma$ be a uniform layered graph with unique minimal vertex.  Then $B(\Gamma)\cong (gr A(\Gamma))^!$.
\end{prop}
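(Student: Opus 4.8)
The plan is to compute the quadratic dual of $gr A(\Gamma)$ directly from the presentation given in Proposition~\ref{gruniform}, and to check that the resulting relations coincide with the defining relations of $B(\Gamma)$. By Proposition~\ref{gruniform}, when $\Gamma$ is uniform with unique minimal vertex we have $gr A(\Gamma)\cong T(V_+)/\langle R\rangle$, where $R\subseteq V_+\otimes V_+$ is the span of the elements $v\otimes(u-w)$ for $v\in V_+$ and $u,w\in S(v)$. This is a quadratic presentation with generating space $W=V_+$, so we may form $R^\perp\subseteq W^*\otimes W^*$ and set $(gr A(\Gamma))^!=T(W^*)/\langle R^\perp\rangle$. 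The vertex set $V_+$ gives a basis of $W$; let $\{v^*: v\in V_+\}$ be the dual basis of $W^*$. I would then identify $T(W^*)$ with $T(V_+)$ via $v^*\mapsto v$ and show that under this identification $\langle R^\perp\rangle = R_B$, which yields the claimed isomorphism.

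The main computation is to describe $R^\perp$ explicitly. An element $\sum_{v,w}c_{vw}\, v^*\otimes w^*$ lies in $R^\perp$ iff it annihilates every generator $v\otimes(u-w)$ of $R$, i.e. iff $c_{vu}=c_{vw}$ for all $v\in V_+$ and all $u,w\in S(v)$. First I would fix the target vertex $v$ and ask which covectors $\sum_w c_{vw} v^*\otimes w^*$ (for fixed first index $v$) pair to zero with all of $v\otimes(\text{span of differences of elements of } S(v))$; since $\Gamma$ is uniform the relation $\sim_v$ has a single class on $S(v)$, so the span of $\{u-w: u,w\in S(v)\}$ inside the span of $S(v)$ is exactly the hyperplane of vectors whose coefficients sum to zero. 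Hence for fixed $v$ the orthogonal complement, inside $v^*\otimes\mathrm{span}(S(v))^*$, is one-dimensional, spanned by $v^*\otimes(\sum_{w\in S(v)}w^*)$ — this is the relation $v\sum_{v\gtrdot w}w$. For indices $w\notin S(v)$ there is no constraint coming from $v$, so every $v^*\otimes w^*$ with $w\notin S(v)$ lies in $R^\perp$ — these are the relations $vw$ for $v\ngtrdot w$. Assembling over all $v\in V_+$, I get that $R^\perp$ is spanned precisely by $\{v^*\otimes w^* : v\ngtrdot w\}\cup\{v^*\otimes\sum_{v\gtrdot w}w^* : v\in V_+\}$, which under the identification $v^*\mapsto v$ is exactly the generating set of $R_B$. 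Therefore $\langle R^\perp\rangle=R_B$ and $(gr A(\Gamma))^!=T(V_+)/R_B=B(\Gamma)$.

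There is one point of care: the definition of quadratic requires $W$ finite-dimensional, so I should note explicitly that we are working under the standing hypothesis that $V$ (hence $V_+$) is finite, and also recall (from Shelton's result, cited before Proposition~\ref{gruniform}) that $gr A(\Gamma)$ is genuinely quadratic when $\Gamma$ is uniform, so the notation $(gr A(\Gamma))^!$ is legitimate. I would also remark that the unique-minimal-vertex hypothesis is what makes the presentation of Proposition~\ref{gruniform} available (it is needed for the $T(V_+)$-presentation of $A(\Gamma)$), so it is not an extra assumption beyond what Proposition~\ref{gruniform} already uses.

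The step I expect to be the only real content is the dimension count for fixed $v$: showing that the annihilator of $\{v\otimes(u-w): u,w\in S(v)\}$ within $v^*\otimes\mathrm{span}(S(v))^*$ is exactly the line through $v^*\otimes\sum_{w\in S(v)}w^*$. This rests entirely on uniformity guaranteeing that $S(v)$ is a single $\sim_v$-class, which forces $\mathrm{span}\{u-w: u,w\in S(v)\}$ to be the full "sum-zero" hyperplane in $\mathrm{span}(S(v))$ rather than a proper subspace; the transitive-closure structure of $\sim_v$ is what lets one write any difference $u-w$ as a telescoping sum of differences of $\sim_v$-adjacent vertices, each of which is a difference of two elements of $S(v)$. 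Everything else is bookkeeping in the tensor algebra.
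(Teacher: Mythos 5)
Your proposal is correct and follows the same route as the paper: start from the quadratic presentation of $gr\,A(\Gamma)$ in Proposition~\ref{gruniform}, compute the orthogonal complement of the relation space, and identify the result with the generators of $R_B$ under $v^*\mapsto v$. The paper simply cites Duffy's thesis for the explicit computation of $R^\perp$ that you carry out by hand, so your write-up is if anything more self-contained. One correction to your final paragraph: once Proposition~\ref{gruniform} is in hand, the fact that $\mathrm{span}\{u-w : u,w\in S(v)\}$ is the full sum-zero hyperplane of $\mathrm{span}(S(v))$ is automatic for \emph{any} finite set $S(v)$ (fix $w_0\in S(v)$ and note that $\{u-w_0\}$ already spans it); it does not rest on uniformity or on the relation $\sim_v$, which concerns common lower covers one level further down and plays no role here. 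Uniformity enters only in establishing the quadratic presentation of $gr\,A(\Gamma)$ itself, which you correctly take as your starting point.
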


\begin{proof} By Proposition~\ref{gruniform}, whenever $\Gamma$ is uniform, $gr R_V$ is generated by 
\[\{v(w-u):v\in V_{\geq2}, u,w\in S(v)\},\]
$(gr R_V)^\perp$ is generated by the collection of $x^*y^*\in(V_+)^*\otimes(V_+)^*$ such that 
\[\langle x^*,v\rangle\langle y^*,w-u\rangle=0\]
for all $v\in V_{\geq2}$ and $u,w\in S(v)$.  In \cite{ColleensThesis}, Duffy shows that this is the ideal generated by
\[\{v^*w^*: v\ngtrdot w\}\cup\left\{v^*\sum_{v\gtrdot w}w^*\right\}.\]
A simple change of variables shows that this is isomorphic to the algebra $B(\Gamma)$.
\end{proof}

Just as $A(\Gamma)$ has a grading given by polynomial degree and a filtration given by the level of the vertices, $B(\Gamma)$ has a double grading given by 
\[B(\G)_{m,n}=\text{span}\left\{v_1\ldots v_m : \sum_{i=1}^m |v_i|=n\right\}.\]
Thus we can define an invariant $\sim_B$ which is analogous to the invariant $\sim_A$ as follows:

\begin{defn}
We say $\Gamma\sim_B\Gamma^\prime$ if and only if there exists an isomorphism $\phi:B(\Gamma)\rightarrow B(\Gamma^\prime)$ which preserves the double grading.
\end{defn}

We know from \cite{Shelton} that $\Gamma$ is a uniform layered graph if and only if $gr A(\Gamma)$ is quadratic.  In this case, $B(\Gamma)$ is isomorphic to its quadratic dual.  All of these transformations preserve both the degree grading and the vertex filtration, and so we have the following:

\begin{prop}
If $gr A(\Gamma)$ is quadratic, then \[(\Gamma\sim_A\Gamma^\prime) \Rightarrow (\Gamma\sim_B\Gamma^\prime).\]
\end{prop}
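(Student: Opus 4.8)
The plan is to unpack the hypothesis and then chase an $\sim_A$-isomorphism through each of the constructions connecting $A(\Gamma)$ to $B(\Gamma)$, checking at every stage that the relevant gradings and filtrations are respected. First I would observe that by Shelton's theorem (cited in the excerpt), $gr A(\Gamma)$ quadratic forces $\Gamma$ to be a uniform layered graph; since $\sim_A$-equivalent graphs have the same layer sizes (by the Proposition on $|V_i| = |V_i'|$) and more to the point an $\sim_A$-isomorphism $\phi: A(\Gamma) \to A(\Gamma')$ preserves the degree grading and vertex filtration, $gr A(\Gamma') $ is also quadratic, hence $\Gamma'$ is uniform as well. This means Proposition~\ref{bgamma} applies to both graphs, giving $B(\Gamma) \cong (gr A(\Gamma))^!$ and $B(\Gamma') \cong (gr A(\Gamma'))^!$.

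Next I would assemble the chain of induced maps. Given a degree-grading- and vertex-filtration-preserving isomorphism $\phi: A(\Gamma) \to A(\Gamma')$, it induces an isomorphism on the associated graded algebras $gr\,\phi: gr A(\Gamma) \to gr A(\Gamma')$; because $\phi$ preserves the degree grading and takes the vertex filtration of $A(\Gamma)$ to that of $A(\Gamma')$, the induced map $gr\,\phi$ is a graded isomorphism for the double grading (degree in one index, total vertex-level in the other). Then I would invoke functoriality of the quadratic dual: a graded isomorphism of quadratic algebras $gr A(\Gamma) \to gr A(\Gamma')$ dualizes to a graded isomorphism $(gr A(\Gamma'))^! \to (gr A(\Gamma))^!$ of the duals, again respecting the double grading (with the degree index unchanged and the vertex-level index carried along, since the dual is formed degree-wise on the degree-one part $V_+$ and its relations live in $V_+ \otimes V_+$). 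Composing with the isomorphisms of Proposition~\ref{bgamma} yields a double-grading-preserving isomorphism $B(\Gamma) \to B(\Gamma')$, which is exactly $\Gamma \sim_B \Gamma'$.

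The main obstacle I anticipate is bookkeeping the \emph{two} gradings correctly through the quadratic-dual step. The quadratic dual $T(W)/R \mapsto T(W^*)/R^\perp$ is contravariant, and one must be careful that the vertex-level grading on $W = V_+$ (where $v \in V_i$ has level $i$) transfers sensibly to $W^* = V_+^*$, and that the change of variables invoked in the proof of Proposition~\ref{bgamma} identifying $(gr A(\Gamma))^!$ with $B(\Gamma)$ is itself double-grading-preserving. Concretely, I would want to note that an isomorphism $gr A(\Gamma) \to gr A(\Gamma')$ preserving the double grading restricts to a linear isomorphism $V_+ \to V_+'$ sending level-$i$ vertices to combinations of level-$i$ vertices, so its transpose does the same on duals, and the induced map on the quotients by $R_V^\perp$ respects the double grading defined by $B(\G)_{m,n}$. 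Everything else is routine functoriality, so once the grading compatibility of the dual is nailed down the proof is essentially a diagram chase; the write-up should therefore be short, emphasizing only that each arrow in the chain
\[
A(\Gamma) \leadsto gr A(\Gamma) \leadsto (gr A(\Gamma))^! \leadsto B(\Gamma)
\]
preserves the appropriate structure.
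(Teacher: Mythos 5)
Your proposal is correct and follows essentially the same route as the paper, which simply invokes Shelton's characterization of uniformity, the identification $B(\Gamma)\cong (gr\,A(\Gamma))^!$ from Proposition~\ref{bgamma}, and the observation that each transformation in the chain $A(\Gamma)\leadsto gr\,A(\Gamma)\leadsto (gr\,A(\Gamma))^!\cong B(\Gamma)$ preserves the degree grading and vertex filtration. Your write-up is in fact more careful than the paper's (which states this in two sentences without detail), particularly in flagging the grading bookkeeping through the contravariant quadratic-dual step.
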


Thus $\sim_B$ gives us a coarser partition of the uniform layered graphs.  In particular, if the $\sim_B$ equivalence class of a uniform layered graph consists of a single isomorphism class, then so does the $\sim_A$ equivalence class.  If the $\sim_B$ equivalence class is small and easy to describe, we know that the $\sim_A$ equivalence class is contained in this small, easily-describable set of layered graphs. In sections \ref{chap:IsosOfB}-\ref{chap:UniquenessResults}, we will take advantage of this to show that several important classes of layered graphs are uniquely identifed by their algebra $A(\Gamma)$ via that $\sim_A$ relation.

\section{Isomorphisms of $B(\Gamma)$}
\label{chap:IsosOfB}
\subsection{Subspaces of $B(\G)$}

Here we will explore the structure of $B(\Gamma)$ considered with the double grading given by
\[B(\G)_{m,n}=\text{span}\left\{v_1\ldots v_m : \sum_{i=1}^m |v_i|=n\right\}.\]

\begin{defn}
For notational convenience, we define $B_n=B(\G)_{1,n}=\left\{\sum_{v\in V_n}\alpha_vv\right\}$, the linear span of the vertices in $V_n$.
\end{defn}

\begin{defn}
Given an element $a=\sum_{v\in V_n}\alpha_vv\in B_n,$  let $A_a~=~\{~v~\in~ V_n~:~\alpha_v~\neq ~0\}.$  Then we define
\begin{itemize}
\item[(i)]$S(a)=S(A_a)$
\item[(ii)]$\sim_a=\sim_{A_a}$
\item[(iii)]$k_a=k_{A_a}$
\item[(iv)]$k_a^a=k_{A_a}^{A_a}$
\item[(v)]$\mathscr{C}_a=\mathscr{C}_{A_a}$
\end{itemize}
\end{defn}

\subsection{The Subspaces $\kappa_a$}

\begin{defn}
  For each element $a\in B_n$, we define a map 
\[L_a: B_{n-1}\rightarrow B(\G)\]
such that $L_a(b)=ab$.  We define $\kappa_a$ be the kernel of the map $L_a$.
\end{defn}

If we consider the example from Figure \ref{singlevertexpic}, we see that $\kappa_v$ is generated by $\{a,e,b+c+d\}.$ In general, if we consider the generating relations for $R_B$, it is clear that for any $v\in V_n$, we have
\[\kappa_v=\text{span}\left(\{w:w\not\hspace{-4pt}\lessdot v\}\cup\left\{\sum_{w\lessdot v}w\right\}\right)\]
It follows that if $|S(v)|>1$, then for any vertex $w$, we have $w\in S(v)$ if and only if $w\notin \kappa_v$.  We can also obtain the following results about the structure of $\kappa_a$:
\begin{lemma}
\label{IntersectingKappas}
For any nonzero $a=\sum_{v\in V_n}\alpha_vv$ in $B_n$,
\[\kappa_a=\bigcap_{\alpha_v\neq0}\kappa_v\]
\end{lemma}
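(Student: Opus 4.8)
The statement asserts that for a nonzero $a = \sum_{v \in V_n} \alpha_v v \in B_n$, the space $\kappa_a$ of elements $b \in B_{n-1}$ with $ab = 0$ in $B(\Gamma)$ equals $\bigcap_{\alpha_v \neq 0} \kappa_v$. The plan is to prove the two inclusions separately, using the explicit description of $R_B$ together with the fact that $B(\Gamma)$ carries the double grading, so that the degree-$(2,\,\cdot\,)$ component $B(\Gamma)_{2,n + (n-1)}$ has as a basis (the images of) those products $vw$ with $v \in V_n$, $w \in V_{n-1}$, $v \gtrdot w$, modulo the single family of relations $v \sum_{v \gtrdot w} w$ for each $v \in V_n$.

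For the inclusion $\bigcap_{\alpha_v \neq 0} \kappa_v \subseteq \kappa_a$: if $b \in \kappa_v$ for every $v$ with $\alpha_v \neq 0$, then $vb = 0$ in $B(\Gamma)$ for each such $v$, and since $ab = \sum_{\alpha_v \neq 0} \alpha_v (vb)$, we get $ab = 0$, i.e. $b \in \kappa_a$. This direction is immediate and requires no structural input.

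The substantive direction is $\kappa_a \subseteq \bigcap_{\alpha_v \neq 0} \kappa_v$, equivalently: if $ab = 0$ then $vb = 0$ for each $v$ in the support $A_a$. Write $b = \sum_{w \in V_{n-1}} \beta_w w$. The product $ab = \sum_{v,w} \alpha_v \beta_w\, vw$ lives in $B(\Gamma)_{2, 2n-1}$, and in $T(V_+)$ before quotienting it is a combination of the monomials $vw$ with $v \in V_n$, $w \in V_{n-1}$. Now observe that the generators of $R_B$ interact with such monomials in a very restricted way: the monomials $vw$ with $v \ngtrdot w$ are killed outright, and the only relations among the surviving monomials $vw$ (with $v \gtrdot w$) are, for each fixed $v$, the single relation $\sum_{v \gtrdot w} \beta$-free element $v\big(\sum_{v \gtrdot w} w\big)$ — crucially, each such relation involves only a \emph{single} $v \in V_n$ on the left. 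Hence the degree-$(2,2n-1)$ part of $R_B$ decomposes as a direct sum over $v \in V_n$ of the corresponding "local" relation spaces, and consequently $ab = 0$ in $B(\Gamma)$ forces, for \emph{each} $v \in V_n$ separately, the vanishing of the $v$-component, which is exactly $\alpha_v \cdot (vb) = 0$ in $B(\Gamma)$. For $v$ with $\alpha_v \neq 0$ this gives $vb = 0$, i.e. $b \in \kappa_v$.

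The main obstacle, and the step to write carefully, is justifying that the relations respect the partition of monomials $vw$ by their left factor $v$ — that is, that there is a well-defined "$v$-component" projection $\pi_v \colon B(\Gamma)_{2,2n-1} \to \mathrm{span}\{vw : v \gtrdot w\}/\langle v\sum_{v\gtrdot w} w\rangle$, and that $\pi_v$ applied to $ab = 0$ yields $\alpha_v(vb) = 0$. The cleanest way is to note that $T(V_+)_{[2]}$ restricted to left factor in $V_n$ and right factor in $V_{n-1}$ splits as $\bigoplus_{v \in V_n}\big(\{v\} \otimes V_{n-1}\big)$, that the relevant generators of $R_B$ each lie in a single summand, so $R_B \cap B(\Gamma)_{2,2n-1}$-preimage splits compatibly, and then the projection descends to the quotient. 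Once this is in place the argument is a one-line bookkeeping step. Everything here uses only the defining relations of $B(\Gamma)$ and the double grading, both recalled above.
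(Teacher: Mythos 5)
Your proposal is correct and takes essentially the same approach as the paper: the key point in both is that every defining relation of $B(\Gamma)$ has a single, well-defined vertex as its left factor, so products decompose by that left factor and $ab=0$ forces each $\alpha_v\, vb$ to vanish separately. The paper simply invokes the global vector-space decomposition $B(\Gamma)=\bigoplus_{v\in V_+} vB(\Gamma)$ where you verify the corresponding splitting in the relevant bidegree; your version is a more explicit justification of the same fact.
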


\begin{proof}
Clearly, if $b\in\bigcap_{\alpha_v\neq 0}\kappa_v$, then $b\in\kappa_a$, which implies that $\bigcap_{\alpha_v\neq 0}\kappa_v\subseteq\kappa_a$.

To obtain $\kappa_a\subseteq \bigcap_{\alpha_v\neq 0}\kappa_v$, we note that $B(\Gamma)$ can be considered as a direct sum of vector spaces
\[B(\Gamma)=\bigoplus_{v\in V_+}v B(\Gamma).\]
If $b\in \kappa_a$, then
\[\sum_{v\in V_n}\alpha_vvb=0,\]
and so we must have $vb=0$ for every $v$ such that $\alpha_v\neq 0$.  Thus we must have $b\in \bigcap_{\alpha_v\neq 0}\kappa_v$, and hence $\kappa_a\subseteq\bigcap_{\alpha_v\neq 0}\kappa_v$.
\end{proof}

\begin{lemma}
\label{StructureOfKappa}
For any $a=\sum\alpha_vv\in B_n$,
\[\kappa_a=\text{span}\left\{\sum_{w\in C}w : C\in\mathscr{C}_a\right\}\]
\end{lemma}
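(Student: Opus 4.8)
The plan is to prove the two inclusions separately, using Lemma~\ref{IntersectingKappas} to reduce the containment $\supseteq$ to a statement about the single-vertex kernels $\kappa_v$ and a direct computation for $\subseteq$.

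First I would establish the easy inclusion. Given a class $C\in\mathscr{C}_a$, I claim $\sum_{w\in C}w\in\kappa_a$. By Lemma~\ref{IntersectingKappas} it suffices to show $\sum_{w\in C}w\in\kappa_v$ for every $v$ with $\alpha_v\neq0$. Fix such a $v$. Since $C$ is an equivalence class of $V_{n-1}$ under $\sim_a=\sim_{A_a}$ and $v\in A_a$, the set $S(v)$ is contained in a single class, namely either $C$ or one of the other classes. If $S(v)\cap C=\emptyset$, then every $w\in C$ satisfies $w\ngtrdot v$, so $\sum_{w\in C}w$ is a sum of generators of $\kappa_v$ and lies in $\kappa_v$. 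If $S(v)\cap C\neq\emptyset$, then by the definition of $\sim_{A_a}$ (transitive closure of the relation $R_{A_a}$, which forces all of $S(v)$ into one class) we have $S(v)\subseteq C$; then $\sum_{w\in C}w = \sum_{w\in S(v)}w + \sum_{w\in C\setminus S(v)}w$, where the first summand is the relation $\sum_{w\lessdot v}w\in\kappa_v$ and the second is a sum of $w$'s with $w\ngtrdot v$, hence in $\kappa_v$. Either way $\sum_{w\in C}w\in\kappa_v$, giving $\mathrm{span}\{\sum_{w\in C}w:C\in\mathscr{C}_a\}\subseteq\kappa_a$.

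For the reverse inclusion, I would take $b=\sum_{w\in V_{n-1}}\beta_w w\in\kappa_a$ and show $b$ is a linear combination of the class-sums. By Lemma~\ref{IntersectingKappas}, $b\in\kappa_v$ for every $v$ with $\alpha_v\neq0$; using the explicit description $\kappa_v=\mathrm{span}(\{w:w\ngtrdot v\}\cup\{\sum_{w\lessdot v}w\})$, membership in $\kappa_v$ forces the coefficients of $b$ to be constant on $S(v)$ (and $0$ outside $S(A_a)$ — wait, not quite: on $V_{n-1}\setminus S(A_a)$ there is no constraint, but each such vertex is its own singleton class, so that is fine). Precisely: for each $v\in A_a$, writing $b$ in terms of the basis $\{w:w\ngtrdot v\}\cup\{\sum_{w\lessdot v}w\}\cup(\text{basis of a complement})$ and noting the only way for $b$ to have no component outside $\kappa_v$ is $\beta_w=\beta_{w'}$ whenever $w,w'\in S(v)$. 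Running this over all $v\in A_a$ and taking the transitive closure, the function $w\mapsto\beta_w$ is constant on each $\sim_a$-equivalence class $C\in\mathscr{C}_a$; call the common value $\beta_C$. Then $b=\sum_{C\in\mathscr{C}_a}\beta_C\sum_{w\in C}w$, which exhibits $b$ in the claimed span.

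The main obstacle is bookkeeping the single-vertex computation cleanly: I need to be careful that ``$b\in\kappa_v$ implies the coefficients of $b$ are constant on $S(v)$'' is argued correctly when $|S(v)|=1$ (then the condition is vacuous, consistent with $S(v)$ being a singleton class) versus $|S(v)|>1$ (where the relation $\sum_{w\lessdot v}w$ genuinely identifies coefficients). A convenient way to handle this uniformly is to note, as remarked before Lemma~\ref{IntersectingKappas}, that $\kappa_v$ has the spanning set $\{w:w\ngtrdot v\}\cup\{\sum_{w\lessdot v}w\}$, and to observe this spanning set is in fact a basis of $\kappa_v$ (the vectors $w$ with $w\ngtrdot v$ are linearly independent among themselves and from $\sum_{w\lessdot v}w$ as long as $S(v)\neq\emptyset$, which holds for $v\in V_+$); then decomposing $B_{n-1}$ as $\kappa_v\oplus(\text{span of all but one }w\in S(v))$ makes ``$b\in\kappa_v\iff\beta$ constant on $S(v)$'' immediate. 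Everything else is routine, and the transitive-closure step is exactly the definition of $\mathscr{C}_a$.
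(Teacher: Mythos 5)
Your proposal is correct and takes essentially the same route as the paper: both arguments reduce to the single-vertex kernels via Lemma~\ref{IntersectingKappas}, identify $\kappa_v$ with the set of elements of $B_{n-1}$ whose coefficients are constant on $S(v)$, and then pass to the $\sim_a$-classes by transitive closure. The paper merely packages the two inclusions you prove separately as one chain of set equalities (and note the minor notational slip: the generators of $\kappa_v$ are the $w$ with $w\nlessdot v$, not $w\ngtrdot v$).
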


\begin{proof}
In light of Lemma~\ref{IntersectingKappas}, this statement reduces to showing that for any subset $A\subseteq V_n$,
\[\bigcap_{v\in A}\kappa_v=\text{span}\left\{\sum_{w\in C}w : C\in\mathscr{C}_A\right\}\]
We have
\begin{eqnarray*}
\bigcap_{v\in A}\kappa_v&=&\bigcap_{v\in A}\left(\text{span}\left(\{w:w\not\hspace{-4pt}\lessdot v\}\cup\left\{\sum_{w\lessdot v}w\right\}\right)\right)\\
&=&\bigcap_{v\in A}\left\{\left(\sum_{w\in V_{n-1}}\beta_ww \right): \beta_w=\beta_{w^\prime}\text{ if } \{w,w^\prime\}\subseteq S(v)\right\}\\
&=&\left\{\left(\sum_{w\in V_{n-1}}\beta_ww \right):\beta_w=\beta_{w^\prime}\text {if }\{w,w^\prime\}\subseteq S(v)\text{ for some }v\in A\right\}\\
&=&\left\{\left(\sum_{w\in V_{n-1}}\beta_ww \right):\beta_w=\beta_{w^\prime}\text{ if }\{w,w^\prime\}\subseteq C\in \mathscr{C}_A\right\}\\
&=&\text{span}\left\{\sum_{w\in C}w : C\in\mathscr{C}_A\right\}
\end{eqnarray*}
\end{proof}

Often it will be useful to be able to refer to $\kappa_A$ for a subset $A\subseteq V_n$:
\begin{defn}
\[\kappa_A=\bigcap_{v\in A}\kappa_v.\]
\end{defn}

Notice that $\kappa_A=\kappa_a$ for $a=\left(\sum_{v\in A}v\right)\in B_n$.  

From Lemma~\ref{StructureOfKappa}, it is easy to see that the following corollaries hold:

\begin{cor}
\label{SizeOfS}
For any $a\in B_n$, $\dim(\kappa_a)=k_a$, and for any $A\subseteq V_n,$ $\dim(\kappa_A)=k_A$.
\end{cor}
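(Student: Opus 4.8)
The plan is to read this straight off Lemma~\ref{StructureOfKappa}. That lemma gives an explicit spanning set for $\kappa_a$, namely $\left\{\sum_{w\in C}w : C\in\mathscr{C}_a\right\}$, so the entire content of the corollary is that this spanning set is actually a basis, i.e.\ that $\dim(\kappa_a)$ equals the number $k_a=|\mathscr{C}_a|$ of blocks.

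First I would invoke Lemma~\ref{StructureOfKappa} to write $\kappa_a=\text{span}\left\{\sum_{w\in C}w : C\in\mathscr{C}_a\right\}$. Then I would observe that the vectors $e_C:=\sum_{w\in C}w$, indexed by the equivalence classes $C\in\mathscr{C}_a$, are linearly independent: since $\mathscr{C}_a$ is by definition a partition of $V_{n-1}$, distinct classes $C\neq C'$ are disjoint, so the monomials (vertices) appearing in $e_C$ and $e_{C'}$ are disjoint subsets of the basis $V_{n-1}$ of $B_{n-1}$. A linear relation $\sum_{C}\gamma_C e_C=0$ therefore forces each $\gamma_C=0$ by comparing coefficients of any single vertex $w\in C$. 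Hence $\{e_C : C\in\mathscr{C}_a\}$ is a basis for $\kappa_a$, and $\dim(\kappa_a)=|\mathscr{C}_a|=k_a$ by the definition of $k_a$. The statement for a subset $A\subseteq V_n$ follows identically, either by repeating the argument with $\mathscr{C}_A$ in place of $\mathscr{C}_a$, or simply by the remark preceding the corollary that $\kappa_A=\kappa_a$ for $a=\sum_{v\in A}v$ together with the already-established first claim; in that case $k_a=k_A$ by definition.

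There is no real obstacle here — the only thing to be careful about is that $\mathscr{C}_a$ is genuinely a set partition (which is immediate since $\sim_a$ is an equivalence relation on $V_{n-1}$), so that the support sets of the $e_C$ are pairwise disjoint and the independence argument is valid. I would keep the proof to two or three sentences, citing Lemma~\ref{StructureOfKappa} and the disjointness of equivalence classes.
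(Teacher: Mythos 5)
Your argument is correct and matches the paper's (unstated) reasoning: the corollary is presented there as an immediate consequence of Lemma~\ref{StructureOfKappa}, and the content you supply --- that the spanning vectors $\sum_{w\in C}w$ for $C\in\mathscr{C}_a$ have pairwise disjoint supports in the basis $V_{n-1}$ and are therefore linearly independent, so $\dim(\kappa_a)=|\mathscr{C}_a|=k_a$ --- is exactly the routine verification the paper leaves to the reader. Your handling of the $A\subseteq V_n$ case via $\kappa_A=\kappa_a$ for $a=\sum_{v\in A}v$ is likewise the intended reduction.
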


\begin{cor}
\label{OutDegree}
For any $a\in B_n$, $|S(a)|=|V_{n-1}|-k_a+k_a^a$, and $|S(A)|=|V_{n-1}|-k_A+k_A^A$
\end{cor}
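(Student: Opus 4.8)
The plan is to reduce both assertions to the definition of $k_A^A$ together with Lemma~\ref{StructureOfKappa}; no genuinely new argument is needed, only bookkeeping. I would first treat the statement for a subset $A\subseteq V_n$. Recall that $k_A^A$ was defined by $k_A^A=k_A-|V_{n-1}|+|S(A)|$, which rearranges immediately to $|S(A)|=|V_{n-1}|-k_A+k_A^A$. (Equivalently, this is part (iii) of the Proposition in Section~2.1 combined with part (i), which identifies $|V_{n-1}|$ with $k_{\emptyset_n}$.) To make the role of Lemma~\ref{StructureOfKappa} explicit, I would observe that $\mathscr{C}_a$ partitions $V_{n-1}$ into $k_a$ blocks, and that $S(a)$ is a union of such blocks: indeed $\sim_a$ is generated by a relation $R_{A_a}$ all of whose related pairs lie in $S(a)$, so every $\sim_a$-class meeting $S(a)$ is contained in $S(a)$. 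Hence exactly $k_a^a$ of the $k_a$ blocks lie inside $S(a)$, while the remaining $k_a-k_a^a$ blocks are singletons inside $V_{n-1}\setminus S(a)$, giving $|V_{n-1}\setminus S(a)|=k_a-k_a^a$ and therefore $|S(a)|=|V_{n-1}|-k_a+k_a^a$; by Corollary~\ref{SizeOfS} this count equals $|V_{n-1}|-\dim(\kappa_a)+k_a^a$, which is the conceptual form of the result.

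For an arbitrary $a=\sum_{v\in V_n}\alpha_v v\in B_n$, I would then simply invoke the definitions $S(a)=S(A_a)$, $k_a=k_{A_a}$, and $k_a^a=k_{A_a}^{A_a}$, and apply the subset case with $A=A_a$ to obtain $|S(a)|=|V_{n-1}|-k_a+k_a^a$.

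There is essentially no obstacle here: the corollary is a direct repackaging of the definition of $k_A^A$ and of Lemma~\ref{StructureOfKappa}. The only point requiring a sentence of justification is that $S(a)$ is saturated under $\sim_a$, so that ``number of $\sim_a$-classes of $V_{n-1}$ that meet $S(a)$'' coincides with ``number of $\sim_a$-classes of $S(a)$,'' which is $k_a^a$ by the remark immediately following the first Definition in Section~2.1; everything else is arithmetic.
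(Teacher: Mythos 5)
Your proposal is correct and matches the paper's (implicit) argument: the paper offers no proof beyond noting that the corollary is immediate from Lemma~\ref{StructureOfKappa} and the definitions, and your rearrangement of $k_A^A=k_A-|V_{n-1}|+|S(A)|$, together with the reduction of the case $a\in B_n$ to $A=A_a$, is exactly that argument. Your extra sentence justifying that $S(a)$ is saturated under $\sim_a$ (so that $k_a^a$ really counts the classes inside $S(a)$) is a correct filling-in of the remark the paper makes after its first Definition.
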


In general, we cannot recover information about $k_a^a$ from $B(\G)$, but in the special case where $v$ is a vertex in $V_n$, we have $k_v^v=1$, and so $|S(v)|=|V_{n-1}|-k_v+1.$

We make the following observations about $\kappa_a$:

\begin{prop}
\label{Kappa}
Let $a=\sum\alpha_vv\in B_n$, and let $w$ be a vertex in $V_n$ such that $\alpha_w\neq 0$.  Then the following statements hold:
\begin{itemize}
\item[(i)]$\kappa_a\subseteq\kappa_w$
\item[(ii)]$k_a\leq k_w$
\item[(iii)]$S(w)\subseteq S(a)$
\end{itemize}
We have equality in $i)$ if and only if we have equality in $ii)$.  If $|S(w)|>1$, then we have equality in $iii)$ if and only if we have equality in $i)$ and $ii)$.  In the case where $|S(w)|=1$, then equality in $iii)$ implies equality in $i)$ and $ii)$, but the other direction of implication does not hold.
\end{prop}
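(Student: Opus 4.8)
The plan is to dispatch the three inclusions first, since each follows immediately from results already in hand, and then obtain the equality statements from a dimension count comparing $\kappa_a$ and $\kappa_w$.

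For (i): by Lemma~\ref{IntersectingKappas} we have $\kappa_a=\bigcap_{\alpha_v\neq 0}\kappa_v$, and since $\alpha_w\neq 0$ the vertex $w$ is one of the indices in this intersection, so $\kappa_a\subseteq\kappa_w$. For (ii): by Corollary~\ref{SizeOfS}, $k_a=\dim\kappa_a$ and $k_w=\dim\kappa_w$, so $\kappa_a\subseteq\kappa_w$ gives $k_a\leq k_w$; moreover, for nested finite-dimensional subspaces of $B_{n-1}$, equality of dimensions is equivalent to equality of the subspaces, which is exactly the assertion that equality holds in (i) iff it holds in (ii). For (iii): $S(a)=S(A_a)=\bigcup_{v\in A_a}S(v)$ and $w\in A_a$, so $S(w)\subseteq S(a)$.

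Next I would show that equality in (iii) forces equality in (i) and (ii). Suppose $S(a)=S(w)$. For each $v$ with $\alpha_v\neq 0$ we then have $S(v)\subseteq S(a)=S(w)$. By Lemma~\ref{StructureOfKappa} applied to the singleton $\{v\}$, whose only nontrivial $\sim$-class is $S(v)$, the space $\kappa_v$ consists exactly of the $\sum\beta_x x\in B_{n-1}$ whose coefficients are constant on $S(v)$; similarly $\kappa_w$ consists of those constant on $S(w)$. Since $S(v)\subseteq S(w)$, coefficients constant on $S(w)$ are a fortiori constant on $S(v)$, so $\kappa_w\subseteq\kappa_v$; intersecting over $v\in A_a$ and invoking Lemma~\ref{IntersectingKappas} gives $\kappa_w\subseteq\kappa_a$, hence $\kappa_a=\kappa_w$ by (i). This argument never uses the size of $S(w)$, so it also supplies the one implication claimed in the $|S(w)|=1$ case; for the failure of the reverse implication there, I would give a small explicit $\Gamma$ containing a vertex $v_0$ with $|S(v_0)|=1$ whose neighbor lies outside $S(w)$, noting that then $\kappa_{v_0}=B_{n-1}$, so including $v_0$ in $a$ does not shrink $\kappa_a$ below $\kappa_w$ even though it enlarges $S(a)$.

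The substantive direction is the remaining one: assuming $|S(w)|>1$, show that equality in (i) and (ii) forces $S(a)=S(w)$. Here I would count $\sim_a$-classes of $V_{n-1}$. From the definition of $k_a^a$ we get $k_a=k_a^a+(|V_{n-1}|-|S(a)|)$ and, using $k_w^w=1$, $k_w=1+(|V_{n-1}|-|S(w)|)$; the hypothesis $k_a=k_w$ thus reads $|S(a)|-|S(w)|=k_a^a-1$. On the other hand, since $w\in A_a$, all of $S(w)$ lies in one $\sim_a$-class $C$, so $|S(a)|\geq |C|+(k_a^a-1)\geq |S(w)|+(k_a^a-1)$, with equality only if $C=S(w)$ and every other $\sim_a$-class contained in $S(a)$ is a singleton; comparing with the displayed identity forces exactly this. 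The main obstacle is the final step: ruling out a stray singleton class $\{u\}\subseteq S(a)\setminus S(w)$. Any such $u$ lies in $S(v_0)$ for some $v_0\in A_a$, which forces $|S(v_0)|=1$, and one then needs some structural input to exclude this configuration (for instance that every vertex occurring in $a$ has out-degree exceeding one, or a connectivity/uniformity property of $\Gamma$ operative in this part of the paper). I expect this to be the delicate point, since $|S(w)|>1$ by itself appears not to preclude such a $v_0$, and the argument must either draw on an additional standing hypothesis or be strengthened accordingly.
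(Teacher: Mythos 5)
Your dispatch of (i)--(iii), the equivalence of equality in (i) and (ii) by the dimension count $k_a=\dim\kappa_a$, and the implication from equality in (iii) to equality in (i) via $S(v)\subseteq S(w)\Rightarrow\kappa_w\subseteq\kappa_v$ are all correct; the paper declares all of this ``obvious from Lemma~\ref{IntersectingKappas} and Lemma~\ref{StructureOfKappa}'' and your write-up supplies exactly the missing details. The one direction you could not close --- that $\kappa_a=\kappa_w$ forces $S(a)=S(w)$ when $|S(w)|>1$ --- is the only direction the paper argues explicitly, and its entire argument is the assertion that $u\in S(a)$ if and only if $u\notin\kappa_a$. That assertion fails in precisely the configuration you isolated: $u\notin\kappa_a$ holds if and only if the $\sim_a$-class of $u$ is not a singleton, so a vertex $v_0\in A_a$ with $S(v_0)=\{u\}$ and $u\notin S(w)$ produces $u\in S(a)\cap\kappa_a$. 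Your instinct that $|S(w)|>1$ does not preclude this is right, and your counting argument correctly reduces the question to excluding exactly these stray singleton classes.

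Concretely, take $V_n=\{w,v_0\}$, $V_{n-1}=\{x,y,z\}$, with $S(w)=\{x,y\}$ and $S(v_0)=\{z\}$, and set $a=w+v_0$. Then $\kappa_{v_0}=B_{n-1}$, so by Lemma~\ref{IntersectingKappas} we get $\kappa_a=\kappa_w=\mathrm{span}\{z,\,x+y\}$ and $k_a=k_w=2$, yet $S(a)=\{x,y,z\}\supsetneq S(w)$. So the ``only if'' half of the $|S(w)|>1$ clause is false as stated; it becomes true under an additional hypothesis such as $|S(v)|>1$ for every $v\in V_n$ (which holds in the lattices treated in Section~\ref{chap:UniquenessResults} above the atom level, where the proposition is actually used in this direction). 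In short: your proof is sound everywhere it is complete, and the gap you flagged is a genuine defect of the statement and of the paper's own proof, not of your approach.
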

\begin{proof}
All  this is obvious from Lemma~\ref{IntersectingKappas} and Lemma~\ref{StructureOfKappa}, except that $\kappa_a=\kappa_w$ implies $S(a)=S(w)$ in the case where $|S(w)|>1$.  Suppose $\kappa_a=\kappa_w$.  Notice that for $u\in V_{n-1}$ we have $u\in S(a)$ if and only if $u\notin\kappa_a$, and $u\in S(w)$ if and only if $u\notin \kappa_w$.  Thus $u\in S(a)$ if and only if $u\in S(w)$, so $S(a)=S(w)$.
\end{proof}

\subsection{Isomorphisms from $B(\G)$ to $B(\G^\prime)$}

Let $\G=(V,E)$, and $\G^\prime(W,F)$ be uniform layered graphs.  Here we consider $B(\Gamma)=T(V_+)/R_B$ and $B(\G^\prime)=T(W_+)/R_B^\prime$.  A natural question to ask is which doubly graded algebra isomorphisms between  $T(V_+)$ and $T(W_+)$ induce isomorphisms between the doubly graded algebras $B(\Gamma)$ and $B(\Gamma^\prime)$.  The answer turns out to be fairly simple:

\begin{theorem}
\label{IsoConditions}
Let $\Gamma=(V,E)$ and $\Gamma^\prime=(W,F)$ be uniform layered graphs with algebras $B(\Gamma)=T(V_+)/R_B$ and $B(\G^\prime)=T(W_+)/R_B^\prime$ resepectively, and let
\[\phi:T(V_+)\rightarrow T(W_+)\]
be an isomorphism of doubly graded algebras.  Then $\phi$ induces a doubly graded algebra isomorphism from $B(\Gamma)$ to $B(\Gamma^\prime)$ if and only if $\kappa_{\phi(v)}=\phi(\kappa_v)$ for all $v\in V$.
\end{theorem}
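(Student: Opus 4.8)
The plan is to prove both directions by tracking how $\phi$ interacts with the generating relations of $R_B$ and $R_B'$, using the filtration-free, purely linear description of $\kappa_v$ supplied by Lemma~\ref{IntersectingKappas} and Lemma~\ref{StructureOfKappa}. The key structural fact I would exploit is the direct-sum decomposition $B(\Gamma) = \bigoplus_{v\in V_+} v\,B(\Gamma)$ (used in Lemma~\ref{IntersectingKappas}): an element $b\in B_{n-1}$ lies in $\kappa_v$ precisely when the $v$-component of $ab$ vanishes for $a$ supported at $v$. Since $\phi$ is doubly graded, it restricts to linear isomorphisms $\phi: B(\Gamma)_{1,n}\to B(\Gamma')_{1,n}$ for each $n$; in particular $\phi$ carries $B_n = \mathrm{span}(V_n)$ isomorphically onto $B_n' = \mathrm{span}(W_n)$, and these restrictions determine $\phi$ on all of $T(V_+)$.

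For the ``if'' direction, I would first observe that $R_B$ is generated in degree $(2,\ast)$, and by Lemma~\ref{IntersectingKappas} (together with the decomposition above) the degree-$(2,n)$ part of $R_B$ is exactly $\sum_{v\in V_+} v\,\kappa_v$, i.e. $R_B\cap(B_{\ast}\otimes B_{\ast}) = \bigoplus_n \big(\sum_{v\in V_{n-|v|\text{ varying}}} v\,\kappa_v\big)$ — more precisely, $R_B$ is generated by $\{v\,k : v\in V_+,\ k\in\kappa_v\}$. Given the hypothesis $\kappa_{\phi(v)} = \phi(\kappa_v)$ for all $v$, I need to show $\phi$ maps this generating set into $R_B'$; the obstacle here is that $\phi(v)$ need not be a single vertex of $W$, so $\phi(v\,k) = \phi(v)\phi(k)$ must be rewritten as a combination of generators $w\,\kappa_w$. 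I would handle this by writing $\phi(v) = \sum_w \beta_w w$ and noting $\phi(k)\in\phi(\kappa_v) = \kappa_{\phi(v)} = \bigcap_{\beta_w\neq 0}\kappa_w$ by Lemma~\ref{IntersectingKappas}; hence $\phi(k)\in\kappa_w$ for every $w$ with $\beta_w\neq 0$, so $\phi(v)\phi(k) = \sum_w \beta_w\, w\,\phi(k)$ is a sum of elements of $R_B'$. Thus $\phi(R_B)\subseteq R_B'$, and by symmetry (applying the same argument to $\phi^{-1}$, whose hypothesis $\kappa_{\phi^{-1}(w)} = \phi^{-1}(\kappa_w)$ follows by applying $\phi^{-1}$ to $\kappa_{\phi(v)} = \phi(\kappa_v)$) we get $\phi^{-1}(R_B')\subseteq R_B$, so $\phi(R_B) = R_B'$ and $\phi$ descends to a doubly graded isomorphism $B(\Gamma)\to B(\Gamma')$.

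For the ``only if'' direction, suppose $\bar\phi: B(\Gamma)\to B(\Gamma')$ is the induced doubly graded isomorphism, so $\phi(R_B) = R_B'$. Fix $v\in V_n$ and $b\in\kappa_v$, i.e. $vb = 0$ in $B(\Gamma)$; then $\bar\phi(v)\bar\phi(b) = \bar\phi(vb) = 0$ in $B(\Gamma')$, so $\phi(b)\in\kappa_{\phi(v)}$, giving $\phi(\kappa_v)\subseteq\kappa_{\phi(v)}$. Applying the same reasoning to $\phi^{-1}$ gives $\phi^{-1}(\kappa_{\phi(v)})\subseteq\kappa_{\phi^{-1}(\phi(v))} = \kappa_v$, hence $\kappa_{\phi(v)}\subseteq\phi(\kappa_v)$, and the two inclusions yield equality. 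I would also remark that it suffices to check the condition on vertices $v\in V$ (rather than all $a\in B_n$), since Lemma~\ref{IntersectingKappas} then propagates it to arbitrary $a$. The main obstacle I anticipate is the bookkeeping in the ``if'' direction: making precise that $R_B$ equals the ideal generated by $\{v\,k : v\in V_+,\ k\in\kappa_v\}$ (as opposed to merely the two families in the definition), which is exactly the content of Lemma~\ref{IntersectingKappas} and Lemma~\ref{StructureOfKappa}, and then verifying that $\phi$ of such a generator lands in $R_B'$ when $\phi(v)$ spreads across several vertices — but the decomposition trick above dissolves this cleanly.
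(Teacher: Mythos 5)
Your argument is in substance the same as the paper's: both directions hinge on rewriting the defining relations of $R_B$ in terms of the subspaces $\kappa_v$, using Lemma~\ref{IntersectingKappas} to handle the case where $\phi(v)=\sum_w\beta_w w$ spreads over several vertices, and running the same argument for $\phi^{-1}$ to get the reverse inclusion. Your ``only if'' direction is a direct two-inclusion argument where the paper argues by contradiction from an element of the symmetric difference of $\kappa_{\phi(v)}$ and $\phi(\kappa_v)$; both are fine, and yours is if anything cleaner. The one genuine inaccuracy is your claim that $R_B$ is generated by $\{vk : v\in V_+,\ k\in\kappa_v\}$. Since $\kappa_v\subseteq B_{|v|-1}$ by definition, this set omits the defining generators $vw$ with $|w|\neq|v|-1$ (for such $w$ one automatically has $v\ngtrdot w$), and those elements do not lie in the ideal generated by the $vk$'s, since the bidegree-$(2,\ast)$ part of that ideal sits inside $\bigoplus_n B_n\otimes B_{n-1}$. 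So as written your ``if'' direction only shows $\phi(\langle vk\rangle)\subseteq R_B'$, not $\phi(R_B)\subseteq R_B'$. The repair is immediate and is exactly the extra clause in the paper's proof: because $\phi$ is doubly graded, $\phi(vw)=\phi(v)\phi(w)$ with $\phi(v)\in B'_{1,|v|}$ and $\phi(w)\in B'_{1,|w|}$, and every product of level-$n$ by level-$m$ elements with $m\neq n-1$ already lies in $R_B'$. With that clause added, your proof is correct.
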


For this result, we will need the following lemma:

\begin{lemma}
\label{IsoConditionsLemma}
 Let $\Gamma=(V,E)$ and $\G^\prime(W,F)$ be  uniform layered graphs, and let
\[\phi:T(V_+)\rightarrow T(W_+)\]
be an isomorphism of doubly graded algebras.  Then the following are equivalent:
\begin{itemize}
\item[(i)] For any $a\in B_n$, $\phi(\kappa_a)=\kappa_{\phi(a)}$.

\item[(ii)] For any $v\in V_n$, $\phi(\kappa_v)=\kappa_{\phi(v)}$.\end{itemize}
\end{lemma}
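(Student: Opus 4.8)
The plan is to prove the two directions separately, with the main work in the implication (ii) $\Rightarrow$ (i). The reverse implication (i) $\Rightarrow$ (ii) is essentially immediate: any single vertex $v \in V_n$ is in particular an element of $B_n$, so specializing (i) to $a = v$ gives $\phi(\kappa_v) = \kappa_{\phi(v)}$. I would state this in one sentence and move on.

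For (ii) $\Rightarrow$ (i), the idea is to reduce an arbitrary $a = \sum_{v} \alpha_v v \in B_n$ to the vertices in its support, using Lemma~\ref{IntersectingKappas}, which tells us $\kappa_a = \bigcap_{\alpha_v \neq 0} \kappa_v$. Applying $\phi$ (a bijection) commutes with intersections, so $\phi(\kappa_a) = \bigcap_{\alpha_v \neq 0} \phi(\kappa_v)$, and by hypothesis (ii) this equals $\bigcap_{\alpha_v \neq 0} \kappa_{\phi(v)}$. Now I need to identify this last intersection with $\kappa_{\phi(a)}$. Here is the subtle point: $\phi(a) = \sum_v \alpha_v \phi(v)$, and since $\phi$ is a doubly graded isomorphism it maps $B_n = B(\Gamma)_{1,n}$ onto $B(\Gamma')_{1,n}$; but $\phi(v)$ need not be a single vertex of $W_n$ — it is some linear combination. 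So I cannot directly invoke Lemma~\ref{IntersectingKappas} in the form stated for $\Gamma'$. Instead I would apply Lemma~\ref{IntersectingKappas} to $\phi(a) \in B(\Gamma')_{1,n}$ directly: writing $\phi(a) = \sum_{w \in W_n} \beta_w w$, it gives $\kappa_{\phi(a)} = \bigcap_{\beta_w \neq 0} \kappa_w$. So the real content is to show $\bigcap_{\alpha_v \neq 0} \kappa_{\phi(v)} = \bigcap_{\beta_w \neq 0} \kappa_w$, i.e. that the kernel of left-multiplication by $\phi(a)$ on $B(\Gamma')_{n-1}$ equals the intersection of the $\kappa_{\phi(v)}$. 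But this follows from the same direct-sum decomposition argument used in the proof of Lemma~\ref{IntersectingKappas}: $B(\Gamma') = \bigoplus_{w \in W_+} w\,B(\Gamma')$, so $\phi(a) b = 0$ forces $\phi(v) b = 0$ for each $v$ in the support of $a$, because $\phi(v) b \in \sum_{w} w\,B(\Gamma')$ decomposes along the direct sum and the contributions from distinct $\phi(v)$'s — wait, they can overlap. Let me reorganize: the cleaner route is to observe $\phi(\kappa_a) = \kappa_{\phi(a)}$ is exactly the statement that $\phi$ restricted to $B_{n-1}$ carries $\ker(L_a)$ onto $\ker(L_{\phi(a)})$, and since $\phi$ is an algebra isomorphism, $\phi(ab) = \phi(a)\phi(b)$, so $L_a(b) = 0 \iff L_{\phi(a)}(\phi(b)) = 0$; combined with $\phi$ being a bijection on $B_{n-1}$, this gives $\phi(\kappa_a) = \kappa_{\phi(a)}$ \emph{automatically}, with no hypothesis needed.

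That last observation actually makes me reconsider the logical structure. The equality $\phi(\kappa_a) = \kappa_{\phi(a)}$ holds for \emph{any} doubly graded algebra isomorphism $\phi: B(\Gamma) \to B(\Gamma')$, purely formally. But in Lemma~\ref{IsoConditionsLemma}, $\phi$ is an isomorphism of the \emph{free} algebras $T(V_+) \to T(W_+)$, not of $B(\Gamma) \to B(\Gamma')$ — it need not descend to the quotients at all. So the formal argument does not apply, and the grading-preservation on the free algebras only tells us $\phi(B_n) = B(\Gamma')_{1,n}$ and $\phi$ is multiplicative on $T(V_+)$, but $\phi(ab)$ computed in $T(V_+)$ maps to $\phi(a)\phi(b)$ in $T(W_+)$, and $\kappa_a$ is the kernel of left-multiplication \emph{in $B(\Gamma)$}, which involves the relations $R_B$. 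So I would write $\kappa_a = \{b \in B_n : ab \in R_B\}$ (identifying $B_n$ and $B(\Gamma)_{1,n}$ via the obvious section, since the relations are in degree $\geq 2$). Then $\phi(\kappa_a) = \kappa_{\phi(a)}$ would require $\phi$ to carry $R_B$-membership to $R_B'$-membership in the relevant graded piece — which is precisely where hypothesis (ii), translated through Lemma~\ref{IntersectingKappas}, does the work.

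\textbf{Revised plan.} First, recast $\kappa_a$ intrinsically: for $a \in B_n \cong B(\Gamma)_{1,n}$, $\kappa_a = \{b \in B(\Gamma)_{1,n-1} : ab = 0 \text{ in } B(\Gamma)\}$, and note via Lemma~\ref{IntersectingKappas} and Lemma~\ref{StructureOfKappa} that everything about $\kappa_a$ is determined by the subspaces $\kappa_v$, $v \in V_n$, together with the vector space $B_n$ and its distinguished basis $V_n$. Second, prove (ii) $\Rightarrow$ (i): given $a = \sum_{\alpha_v \neq 0} \alpha_v v$, use $\kappa_a = \bigcap_{\alpha_v \neq 0}\kappa_v$ (Lemma~\ref{IntersectingKappas}), apply the bijection $\phi$ to get $\phi(\kappa_a) = \bigcap_{\alpha_v \neq 0}\phi(\kappa_v) = \bigcap_{\alpha_v\neq 0}\kappa_{\phi(v)}$ by (ii); then it remains to show this equals $\kappa_{\phi(a)}$. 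Write $\phi(a) = \sum_{w \in W_n}\beta_w w$; since $\phi$ is a graded isomorphism of free algebras and preserves the natural bases up to the relations, the support of $\phi(a)$ and the supports of the $\phi(v)$ are controlled by the fact that $\{\phi(v) : v \in V_n\}$ is a basis of $W_n$, and $\bigcap_{\alpha_v \neq 0}\kappa_{\phi(v)}$ is, by the direct-sum argument of Lemma~\ref{IntersectingKappas} applied in $B(\Gamma')$, exactly $\kappa_{\phi(a)}$. \textbf{Main obstacle.} The genuine difficulty is bridging the gap between $\phi$ being an isomorphism of the \emph{free} algebras (which a priori knows nothing about the ideals $R_B$, $R_B'$) and the conclusion about the ideal-defined subspaces $\kappa$. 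Hypothesis (ii) is precisely the bridge, and the crux is verifying that matching all the single-vertex kernels $\kappa_v \mapsto \kappa_{\phi(v)}$ forces the matching of \emph{all} kernels $\kappa_a$; Lemma~\ref{IntersectingKappas} reduces this to the interaction of intersections with the direct-sum decomposition $B(\Gamma') = \bigoplus_{w} w\,B(\Gamma')$, which I expect is the step requiring the most care to state without circularity.
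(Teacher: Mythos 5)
Your final ``revised plan'' is essentially the paper's proof: (i)$\Rightarrow$(ii) by specializing $a$ to a vertex, and (ii)$\Rightarrow$(i) by writing $\kappa_a=\bigcap_{\alpha_v\neq 0}\kappa_v$ via Lemma~\ref{IntersectingKappas}, pushing the intersection through the injective map $\phi$, and then expanding each $\phi(v)=\sum_w\beta_{v,w}w$ in the vertex basis of $W_n$ so that Lemma~\ref{IntersectingKappas}, applied in $B(\Gamma^\prime)$, identifies $\bigcap_{\alpha_v\neq 0}\kappa_{\phi(v)}$ with $\kappa_{\phi(a)}$. The step you flag as the delicate one is exactly the step the paper carries out with the double-indexed coefficients $\beta_{v,w}$, so the proposal is correct and follows the same route.
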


\begin{proof}
Clearly, we have (i) $\Rightarrow$ (ii).  To show that (ii) $\Rightarrow$ (i), let
\[a=\sum \alpha_v v.\]
We know from Lemma~\ref{IntersectingKappas} that
\[\kappa_a=\bigcap_{\alpha_v\neq 0}\kappa_v.\]
Since $\phi$ is injective, this implies that
\[\phi(\kappa_a)=\bigcap_{\alpha_v\neq 0}\phi(\kappa_v)=\bigcap_{\alpha_v\neq 0}\kappa_{\phi(v)}\]
Let $\phi(v)=\sum \beta_{v,w}w$.  Then
\[\kappa_{\phi(v)}=\bigcap_{\beta_{v,w}\neq 0}\kappa_w,\]
and
\[\phi(a)=\sum \left(\alpha_v\left(\sum \beta_{v,w}w\right)\right)=\sum \alpha_v\beta_{v,w}w,\]
so we have
\[\phi(\kappa_a)=\bigcap_{\substack{\alpha_v\neq 0\\ \beta_{v,w}\neq 0}}
\kappa_w=\kappa_{\phi(a)}\]
\end{proof}

\begin{proof}[Proof of Theorem~\ref{IsoConditions}]
For the purposes of this proof, we will let $B_n=B(\Gamma)_{1,n},$ and $B_n^\prime=B(\Gamma^\prime)_{1,n}$. We wish to show that $\phi(R_B)=R_B^\prime$ if and only if $\kappa_{\phi(v)}=\phi(\kappa_v)$ for all $v\in V_n$.  We have
\[R_B=\langle ab : a\in B_n, b\in \kappa_a\text{ or }b\notin B_{n-1}\rangle\]
and so
\[\phi(R_B)=\langle\phi(ab) : a\in B_n, b\in \kappa_a\text{ or }b\notin B_{n-1}\rangle.\]
We also have
\[R_B^\prime=\langle ab : a\in B^\prime_n, b\in \kappa_a\text{ or }b\notin B^\prime_{n-1}\rangle.\]

Suppose that $\kappa_{\phi(v)}=\phi(\kappa_v)$ for all $v\in V_+$.  Then by Lemma~\ref{IsoConditionsLemma}, we have $\kappa_{\phi(a)}=\phi(\kappa_a)$ for any $a\in B_n$.  This means that for any $a$, we have $\kappa_a=\phi\inv(\kappa_{\phi(a)})$, and thus $\kappa_{\phi\inv(a)}=\phi\inv(\kappa_a)$.

Let $ab$ be one of the generators of $R_B$.  Then $a\in V_n$ and either $b\in \kappa_a$ or $b\notin B_{n-1}$. If $b\in \kappa_a$, we have
\[\phi(ab)=\phi(a)\phi(b)\]
We know $\phi(b)\in\phi(\kappa_a)=\kappa_{\phi(a)}$, so $\phi(ab)\in R_B^\prime$.  Otherwise, $b\notin B_{n-1}$, and so we have $\phi(a)\in B^\prime_n$ and $\phi(b)\notin B^\prime_{n-1}$, so $\phi(a)\phi(b)=\phi(ab)$ is in $R_B^\prime$. It follows that $\phi(R_B)\subseteq R_B^\prime$.

$R_B^\prime$ is generated by elements of the form $ab$, for $a\in W_n$ and $b\in \kappa_a$ or $b\notin B^\prime_{n-1}$.  If $b\in \kappa_a$,  we have
\[ab=\phi(\phi\inv(a)\phi\inv(b))\]
We know that $\phi\inv(a)\in B_n$, and $\phi\inv(b)\in\phi\inv(\kappa_a)=\kappa_{\phi\inv(a)}.$  Thus $\phi\inv(a)\phi\inv(b)\in R_B$, and so $ab\in \phi(R_B)$.  If $b\notin B_{n-1}^\prime$, then we have $\phi\inv(a)\in B_n$ and $\phi\inv(b)\notin B_{n-1}$, and so $\phi\inv(ab)\in R_B$.  It follows that $ab\in \phi(R_B)$, and so $R_B^\prime\subseteq \phi(R_B)$.  This means that $R_B^\prime=\phi(R_B)$.

Conversely, suppose $\kappa_{\phi(v)}\neq\phi(\kappa_v)$ for some $v\in V_n$.  Then there must be some $a$ such that
\[a\in\left(\kappa_{\phi(v)}\setminus\phi(\kappa_v)\right)\cup\left(\phi(\kappa_v)\setminus\kappa_{\phi(v)}\right)\]

If $a\in (\kappa_{\phi(v)}\setminus\phi(\kappa_v))$, then we have $\phi\inv(a)\notin \kappa_v$, so $v\phi\inv(a)\notin R_B$.  However, we also have $a\in\kappa_{\phi(v)}$, so $\phi(v\phi\inv(a))=\phi(v)a\in R_B^\prime.$  Since $\phi$ is a bijection, this means $R_B^\prime\neq \phi(R_B)$.

If $a\in (\phi(\kappa_v)\setminus\kappa_{\phi(v)})$, then $v\phi\inv(a)\in R_B,$ but $\phi(v\phi\inv(a))=\phi(v)a\notin R_B$.  Since $\phi$ is a bijection, $R_B^\prime\neq \phi(R_B)$.

\end{proof}

\section{Upper Vertex-Like Bases}
\label{chap:UpperVLikeB}

We are interested in studying the equivalence classes of layered graphs under the relations $\sim_A$ and $\sim_B$.  As previously discussed, for uniform layered graphs $\Gamma$ and $\Gamma^\prime$, we have $\Gamma\sim_A\Gamma^\prime \Rightarrow \Gamma\sim_B\Gamma^\prime$, so for now we will focus on the relation $\sim_B$.  Suppose that for a particular graph $\Gamma$, we are given the doubly-graded algebra $B(\Gamma)$, but no additional information about the graph.  What information about $\Gamma$ can we recover?

If we could somehow identify the vertices in $B(\Gamma)$, we could recover quite a bit of information.  In particular, we would know $S(v)$ for every vertex $v$ of degree greater than 1.  Unfortunately, it is not always possible to recover the vertices from the algebra.  Consider the following graph $\Gamma$:

\begin{center}
\includegraphics[scale=.15]{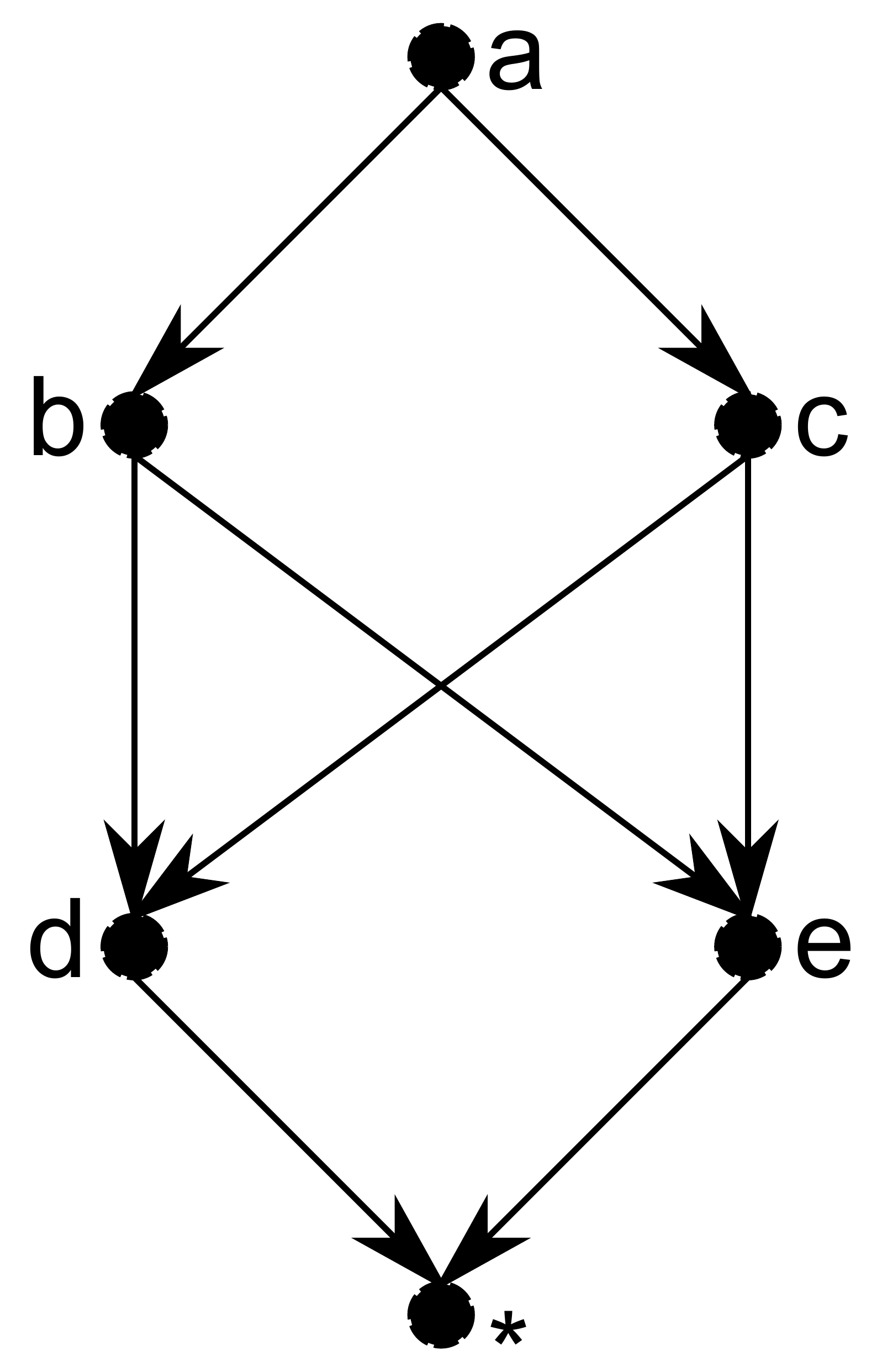}
\end{center}

Using Proposition~\ref{IsoConditions}, one can show that the algebra map $\phi$ given by
\[\phi(v)=\left\{
\begin{array}{ll}
b+c&\text{if }v=b\\
v&\text{ if }v\in V_+\setminus\{b\}
\end{array}
\right.\]
is an automorphism of $B(\Gamma)$ which does not fix the vertices.  Thus we cannot, in general, identify which elements of $B(\Gamma)$ are vertices.  However, it is possible to find a collection of algebra elements that in some sense ``act like'' vertices.  In this section, we will give a construction for these elements.

\subsection{Upper Vertex-Like Bases}

We will continue to use the notation $B_n$ for $B(\Gamma)_{1,n}$, the linear span of $V_n$ in $B(\Gamma)$.

\begin{defn}
If $\Gamma=(\bigcup_{i=0}^\infty V_i,\bigcup_{i=1}^\infty E_i)$ is a layered graph, then the \textbf{restriction of }$\Gamma$\textbf{ to }$n$ is
\[\Gamma|_n=\left(\bigcup_{i=0}^n V_i,\bigcup_{i=1}^n E_i\right)\]
\end{defn}

Notice that $\Gamma|_n$ is uniform whenever $\Gamma$ is uniform, and that in this case $B(\Gamma|_n)$ is the subalgebra of $B(\Gamma)$ generated by $\bigcup_{i=1}^nV_i$.

\begin{defn}
A basis $L$ for $B_n$ is called an \textbf{upper vertex-like} basis if there exists a doubly-graded algebra isomorphism
\[\phi: B(\Gamma|_n)\rightarrow B(\Gamma|_n)\]
such that $\phi$ fixes $\bigcup_{i=1}^{n-1}V_i$, and $\phi(V_n)=L$.
\end{defn}

In other words, an upper vertex-like basis is a collection of algebra elements that are indistinguishable from the vertices inside the subalgebra $B(\Gamma|_n)$.

\begin{prop}
\label{UpperVertexLikeBijection}
A basis $L$ for $B_n$ is upper vertex-like if and only if there exists a bijection $\phi:V_n\rightarrow L$ such that $\kappa_v=\kappa_{\phi(v)}$ for all $v\in V_n$.
\end{prop}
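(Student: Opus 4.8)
The plan is to connect the two characterizations through Theorem~\ref{IsoConditions}, together with the elementary observation that a doubly-graded algebra automorphism automatically conjugates the spaces $\kappa_a$. First I would record a bookkeeping fact: the canonical map $T(V_+)\to B(\Gamma)$ is injective in each bidegree $(1,k)$, since every generator of $R_B$ has first grading at least $2$; hence for $1\le k\le n$ I may identify $B_k=B(\Gamma)_{1,k}=B(\Gamma|_n)_{1,k}$ with $\mathrm{span}(V_k)\subseteq T((V|_n)_+)$, so that any basis $L$ of $B_n$ lifts canonically to a set of homogeneous elements of $T((V|_n)_+)$ of bidegree $(1,n)$. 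I would also keep in mind that $\kappa_a\subseteq B_{n-1}$ for $a\in B_n$, and that $B_0=0$ (there are no generators at level $0$).

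For the forward direction, I would take the witnessing isomorphism $\phi\colon B(\Gamma|_n)\to B(\Gamma|_n)$ that fixes $\bigcup_{i=1}^{n-1}V_i$ pointwise with $\phi(V_n)=L$, and show that $\phi|_{V_n}\colon V_n\to L$ is the desired bijection. For $v\in V_n$ and $b\in B_{n-1}$ one has $vb=0\iff\phi(v)\phi(b)=0$, and $\phi$, being doubly graded, carries $B_{n-1}$ onto $B_{n-1}$; hence $\phi(\kappa_v)=\kappa_{\phi(v)}$ directly, with no appeal to Theorem~\ref{IsoConditions}. Since $\kappa_v\subseteq\mathrm{span}(V_{n-1})$ and $\phi$ fixes $V_{n-1}$ pointwise, $\phi(\kappa_v)=\kappa_v$, so $\kappa_{\phi(v)}=\kappa_v$ for every $v\in V_n$ (the case $n=1$ being vacuous).

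For the converse, given a bijection $\phi\colon V_n\to L$ with $\kappa_v=\kappa_{\phi(v)}$ for all $v\in V_n$, I would define an algebra endomorphism $\widetilde\phi$ of $T((V|_n)_+)$ by $\widetilde\phi(v)=v$ for $v\in\bigcup_{i=1}^{n-1}V_i$ and $\widetilde\phi(v)=\phi(v)\in\mathrm{span}(V_n)$ for $v\in V_n$. Because $V_n$ and $L$ are both bases of $B_n$, $\widetilde\phi$ is invertible on the generating space $\bigoplus_{i=1}^n\mathrm{span}(V_i)$ and hence an automorphism of $T((V|_n)_+)$, and it is plainly doubly graded. I would then verify the hypothesis $\kappa_{\widetilde\phi(v)}=\widetilde\phi(\kappa_v)$ of Theorem~\ref{IsoConditions} for $\Gamma=\Gamma'=\Gamma|_n$ (uniform, since $\Gamma$ is) and all $v\in(V|_n)_+$: if $|v|<n$ then $\widetilde\phi$ fixes $v$ and fixes $\mathrm{span}(V_{|v|-1})\supseteq\kappa_v$, so both sides equal $\kappa_v$; if $|v|=n$ then $\kappa_{\widetilde\phi(v)}=\kappa_{\phi(v)}=\kappa_v$ by hypothesis while $\widetilde\phi(\kappa_v)=\kappa_v$ since $\kappa_v\subseteq\mathrm{span}(V_{n-1})$ is fixed. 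Theorem~\ref{IsoConditions} then gives an induced doubly-graded automorphism $\overline\phi$ of $B(\Gamma|_n)$ fixing $\bigcup_{i=1}^{n-1}V_i$ with $\overline\phi(V_n)=L$, which is exactly what is needed for $L$ to be upper vertex-like.

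No step here is deep: Theorem~\ref{IsoConditions} supplies the real content in the converse, and the forward direction is nearly automatic. The one place to be careful is the identification of $B_n$ with a subspace of $T((V|_n)_+)$, which is what lets the lift $\widetilde\phi$ be defined and be genuinely doubly graded, and then the routine check that $\kappa_v$ always lies inside a subspace on which $\widetilde\phi$ acts as the identity.
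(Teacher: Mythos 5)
Your proposal is correct and follows essentially the same route as the paper: lift the bijection to a doubly graded automorphism of the tensor algebra fixing the lower levels, observe that $\kappa_v\subseteq B_{n-1}$ is fixed pointwise, and invoke Theorem~\ref{IsoConditions} to descend to $B(\Gamma|_n)$; the forward direction likewise matches. Your direct verification of $\phi(\kappa_v)=\kappa_{\phi(v)}$ in the forward direction (rather than citing Theorem~\ref{IsoConditions}, whose hypotheses concern a tensor-algebra map) is a small tidiness improvement over the paper's wording, but not a different argument.
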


\begin{proof}
This follows from Theorem~\ref{IsoConditions}.  Define
\[\phi^\prime: T\left(\bigcup_{i=1}^nV_i\right)\rightarrow T\left(\bigcup_{i=1}^nV_i\right)\]
to be the graded algebra isomorphism given by
\[\phi^\prime(v)=\left\{
\begin{array}{ll}
v&\text{if } v\in \bigcup_{i=1}^{n-1}V_i\\
\phi(v)&\text{if } v\in V_n
\end{array}
\right.\]
Notice that $\phi^\prime$ fixes all of $B(\Gamma|_{n-1})$.  Since  $\kappa_v\subseteq B_{n-1}$, we have $\phi(\kappa_v)=\kappa_v$.  Thus $\kappa_v=\kappa_{\phi(v)}$ is exactly the condition we need for $\phi^\prime$ to induce an isomorphism on $B(\Gamma|_n)$.

Conversely, if $L$ is upper vertex-like, then there exists $\phi:B(\Gamma|_n)\rightarrow B(\G|_n)$ which fixes $\bigcup_{i=1}^{n-1}V_i$.  The restriction of $\phi$ to $V_n$ is a bijection from $V_n$ to $L$.  Theorem~\ref{IsoConditions} tells us that $\phi(\kappa_v)=\kappa_{\phi(v)}$.  Since $\phi$ fixes $\kappa_v$, we have our result.
\end{proof}

One advantage to an upper vertex-like basis for $B_n$ is that it allows us acces to the $\kappa_v$ subspaces for $v\in V_n$.  For convenience, here we will use the notation $\kappa(a)$ rather than $\kappa_a$.

\begin{prop}
Let $\G=\left(\bigcup_{i=0}^\infty V_i,E\right)$ be a layered graph, and let $L$ be an upper vertex-like basis for $B_n$.  Then there exists a bijection
\[\psi:V_n\rightarrow L\]
such that for any $A\subseteq V_n$, we have
\[\kappa\left(\sum_{v\in A}v\right)=\kappa\left(\sum_{v\in A}\psi(v)\right)\]
\end{prop}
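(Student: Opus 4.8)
The plan is to take the bijection $\psi = \phi|_{V_n}$ supplied by Proposition~\ref{UpperVertexLikeBijection}, which already satisfies $\kappa_v = \kappa_{\psi(v)}$ for every single vertex $v \in V_n$, and upgrade this from individual vertices to arbitrary sums $\sum_{v \in A} v$ with $A \subseteq V_n$. The key observation is that $\psi$ is the restriction to $V_n$ of a doubly-graded algebra automorphism $\phi$ of $B(\Gamma|_n)$ that fixes $\bigcup_{i=1}^{n-1} V_i$ pointwise, and in particular $\phi$ is linear on $B_n$, so $\phi\!\left(\sum_{v \in A} v\right) = \sum_{v \in A} \psi(v)$. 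Thus the statement I must prove is precisely $\kappa\!\left(\sum_{v\in A} v\right) = \kappa\!\left(\phi\!\left(\sum_{v\in A} v\right)\right)$ for all $A \subseteq V_n$, i.e.\ that $\phi$ behaves correctly on $\kappa_a$ for $a = \sum_{v \in A} v$.

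First I would invoke Lemma~\ref{IsoConditionsLemma}: since $\phi$ is a doubly-graded algebra isomorphism satisfying condition (ii) of that lemma (namely $\phi(\kappa_v) = \kappa_{\phi(v)}$ for all $v \in V_n$, which is exactly what Proposition~\ref{UpperVertexLikeBijection} gives us, using $\kappa_v = \kappa_{\psi(v)} = \kappa_{\phi(v)}$ since $\phi$ fixes $B_{n-1}$), it also satisfies condition (i): $\phi(\kappa_a) = \kappa_{\phi(a)}$ for every $a \in B_n$. Apply this to $a = \sum_{v\in A} v$, giving $\phi\!\left(\kappa\!\left(\sum_{v\in A}v\right)\right) = \kappa\!\left(\sum_{v\in A}\psi(v)\right)$. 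The final step is to observe that $\kappa_a \subseteq B_{n-1}$ by construction (it is the kernel of $L_a : B_{n-1} \to B(\Gamma)$), and $\phi$ fixes $B_{n-1}$ pointwise since it fixes $\bigcup_{i=1}^{n-1}V_i$ and $B_{n-1}$ is the span of $V_{n-1}$. Hence $\phi\!\left(\kappa\!\left(\sum_{v\in A}v\right)\right) = \kappa\!\left(\sum_{v\in A}v\right)$, and combining the two equalities yields the claim.

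I do not anticipate a genuine obstacle here; the proposition is essentially a bookkeeping corollary of Proposition~\ref{UpperVertexLikeBijection} together with Lemma~\ref{IsoConditionsLemma}. The one point requiring minor care is making sure the hypotheses of Lemma~\ref{IsoConditionsLemma} are literally met — in particular that "$\phi(\kappa_v) = \kappa_{\phi(v)}$" holds rather than merely "$\kappa_v = \kappa_{\psi(v)}$" — but this is immediate because $\phi$ restricted to $B_{n-1}$ is the identity, so $\phi(\kappa_v) = \kappa_v = \kappa_{\psi(v)} = \kappa_{\phi(v)}$. A secondary cosmetic point is confirming that $\psi = \phi|_{V_n}$ is indeed a bijection onto $L$, which is part of the definition of an upper vertex-like basis. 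Everything else is a direct chain of equalities, so the writeup will be short.
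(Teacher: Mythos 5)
Your proposal is correct and follows essentially the same route as the paper: take $\psi=\phi|_{V_n}$ for the automorphism $\phi$ of $B(\Gamma|_n)$ witnessing that $L$ is upper vertex-like, upgrade the vertex condition to $\phi(\kappa_a)=\kappa_{\phi(a)}$ for all $a\in B_n$ via Lemma~\ref{IsoConditionsLemma}, and then use that $\phi$ fixes $B_{n-1}$ pointwise so that $\phi(\kappa_a)=\kappa_a$. The paper leaves the appeal to Lemma~\ref{IsoConditionsLemma} implicit, so if anything your writeup is slightly more careful on that point.
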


\begin{proof}
Since $L$ is upper vertex-like, there exists an isomorphism
\[\phi:B(\G|_n)\rightarrow B(\G|_n)\]
which fixes $V_i$ for $1\leq i\leq n-1$, and takes $V_n$ to $L$.   Since $\phi(\kappa(a))=\kappa(\phi(a))$ for any $a\in B_n$, we have
\[\phi\left(\kappa\left(\sum_{v\in A}v\right)\right)=\kappa\left(\phi\left(\sum_{v\in A}v\right)\right)=\kappa\left(\sum_{v\in A}\phi(v)\right)=\kappa\left(\sum_{v\in A}\psi(v)\right),\]
and since $\phi$ fixes all elements of $B(\Gamma_{n-1})$, we have
\[\phi\left(\kappa\left(\sum_{v\in A}v\right)\right)=\kappa\left(\sum_{v\in A}v\right)\]
Thus we may take $\psi=\phi|_{V_n}$, and the result follows.
\end{proof}

This allows us to obtain information about the structure of the graph $\Gamma$ from the algebra $B(\Gamma)$.

\begin{example}
Consider the out-degree sequence of the graph $\Gamma$.  If $L=\{b_1,\ldots,b_{\dim(B_n)}\}$ is an upper vertex-like basis for $B_n$ and $\dim(\kappa_{b_i})=d$, then there exists $v\in V_n$ with $\dim(\kappa_v)=d$.  We have
\[|S(v)|=V_i-d+1,\]
and so $\Gamma$ must have a vertex in $V_n$ with out-degree $\dim(B_n)-d+1$.  In fact, since we can calculate $\dim(\kappa_{b_i})$ for all $1\leq i\leq \dim(B_n)$, these upper vertex-like bases allow us to calculate the degree sequence of the entire graph.  More precisely, this argument gives us
\begin{prop}
\label{OutDegree}
Let $\Gamma$ be a directed graph, and let $L$ be an upper vertex-like basis for $B_n\subseteq B(\Gamma)$.  Then multiset $\{|S(v)| : v\in V_n\}$ is equal to the multiset
\[\{\dim(B_n)-\dim(\kappa_b)+1 : b\in L\},\]
and thus can be calculated from $B(\Gamma)$.
\end{prop}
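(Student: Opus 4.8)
The plan is to combine the characterization of upper vertex-like bases from Proposition~\ref{UpperVertexLikeBijection} with the dimension count for the subspaces $\kappa_a$ from Corollary~\ref{SizeOfS} and the out-degree formula from Corollary~\ref{OutDegree} (the one preceding the current statement, giving $|S(v)| = |V_{n-1}| - k_v + k_v^v$). Since $L$ is upper vertex-like, Proposition~\ref{UpperVertexLikeBijection} supplies a bijection $\psi: V_n \to L$ with $\kappa_v = \kappa_{\psi(v)}$ for every $v \in V_n$; in particular $\dim(\kappa_v) = \dim(\kappa_{\psi(v)})$, so the multisets $\{\dim(\kappa_v) : v \in V_n\}$ and $\{\dim(\kappa_b) : b \in L\}$ are equal.

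Next I would translate a single dimension into an out-degree. By Corollary~\ref{SizeOfS}, $\dim(\kappa_v) = k_v$. For a genuine vertex $v \in V_n$ we always have $k_v^v = 1$ (as noted just after Corollary~\ref{OutDegree} in the excerpt: $S(v)$ is a single $\sim_v$-equivalence class), so the general formula $|S(v)| = |V_{n-1}| - k_v + k_v^v$ specializes to $|S(v)| = |V_{n-1}| - k_v + 1 = |V_{n-1}| - \dim(\kappa_v) + 1$. Now observe that $|V_{n-1}| = \dim(B(\Gamma)_{1,n-1})$, but the statement writes $\dim(B_n)$; I would note that in the statement's ambient notation $B_n$ denotes $B(\Gamma)_{1,n}$, whose dimension is $|V_n|$, whereas $\kappa_b$ lives in $B_{n-1}$ and the relevant count is $|V_{n-1}|$ --- so I would make sure to use $\dim(B(\Gamma)_{1,n-1}) = |V_{n-1}|$ here and phrase the final formula accordingly (writing $|V_{n-1}|$, or $\dim(B_{n-1})$, rather than $\dim(B_n)$, flagging the apparent index slip in the statement as an abuse of notation to be read as "the dimension of the layer one below").

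Assembling: applying $|S(v)| = |V_{n-1}| - \dim(\kappa_v) + 1$ to each $v \in V_n$ and using the bijection $\psi$ to rewrite $\dim(\kappa_v) = \dim(\kappa_{\psi(v)})$, we get that the multiset $\{|S(v)| : v \in V_n\}$ equals $\{|V_{n-1}| - \dim(\kappa_b) + 1 : b \in L\}$. Finally, everything on the right-hand side is computable from the doubly-graded algebra $B(\Gamma)$ alone: $L$ is given as a subset of $B(\Gamma)_{1,n}$, $|V_{n-1}| = \dim B(\Gamma)_{1,n-1}$, and each $\kappa_b = \ker(L_b : B(\Gamma)_{1,n-1} \to B(\Gamma)_{2,2n-1})$ is the kernel of a linear map between finite-dimensional graded pieces of $B(\Gamma)$, hence its dimension is determined by $B(\Gamma)$.

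I do not expect a serious obstacle here; the proposition is essentially a bookkeeping corollary of results already established. The one point requiring care is the index discrepancy between $\dim(B_n)$ as written in the statement and the $|V_{n-1}|$ that actually appears in the out-degree formula --- I would resolve this by fixing the convention explicitly at the start of the proof and proceeding from $|S(v)| = \dim(B_{n-1}) - \dim(\kappa_v) + 1$. A secondary point to state cleanly is that $\dim(\kappa_b)$ genuinely is an invariant of the algebra $B(\Gamma)$ and not of the chosen presentation, which follows since $\kappa_b$ is intrinsically the kernel of left-multiplication by $b$ restricted to the graded piece $B(\Gamma)_{1,n-1}$.
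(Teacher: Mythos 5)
Your proof is correct and follows essentially the same route as the paper's own argument (given in the example immediately preceding the proposition): the bijection from Proposition~\ref{UpperVertexLikeBijection} matches up the $\kappa$-subspaces of $L$ with those of $V_n$, and the formula $|S(v)|=|V_{n-1}|-\dim(\kappa_v)+1$ (from Corollary~\ref{SizeOfS} together with $k_v^v=1$) converts dimensions into out-degrees. You are also right to flag the index slip: the $\dim(B_n)$ in the statement should be $\dim(B_{n-1})=|V_{n-1}|$, a typo that the paper's own example text carries as well.
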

\end{example}

\begin{example}
We can also draw some inferences about the size of our $S(v)$ sets.  Suppose $\Gamma(V,E)$ is a layered graph, and $v,w\in V_n$ with $|S(v)\cap S(w)|\geq 1$.  Recall that
\[|S(A)|=V_{n-1}-k_A+k_A^A.\]
This gives us
\[|S(\{v,w\})|=V_{n-1}-k_{\{v,w\}}+k_{\{v,w\}}^{\{v,w\}},\]
Since $S(v)\cap S(w)\neq \emptyset$, we have $k_{\{v,w\}}^{\{v,w\}}=1$.  This in combination with the fact that
\[|S(v)|=V_{n-1}-k_v+1\]
and
\[|S(w)|=V_{n-1}-k_w+1\]
gives us
\[|S(v)\cap S(w)|=V_n+k_{v,w}-k_v-k_w+1.\]
If we have $S(v)\cap S(w)=\emptyset$, then $k_{\{v,w\}}^{\{v,w\}}=2,$ and so
\[|S(v)\cap S(w)|=1\]
This allows us to conclude the following:
\begin{prop}
\label{IntersectionSize}
Let $\Gamma$ be a directed graph, and let $L$ be an upper vertex-like basis for $B_n\subseteq B(\Gamma)$.  If there exist $b_1$ and $b_2$ in $B_n$ satisfying
\[V_{n-1}+k_{\{b_1,b_2\}}-k_{b_1}-k_{b_2}+1=m\geq 2,\]
then there exist $v,w\in V_n$ with $|S(v)\cap S(w)|=m$.
\end{prop}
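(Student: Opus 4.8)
The plan is to transport the computation carried out in the Example immediately preceding the statement along the bijection supplied by the upper vertex-like basis $L$, and then to use the hypothesis $m \geq 2$ to force the relevant intersection of $S$-sets to be nonempty.

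First I would apply Proposition~\ref{UpperVertexLikeBijection} to obtain a bijection $\psi : V_n \to L$ with $\kappa_v = \kappa_{\psi(v)}$ for every $v \in V_n$. Given the two elements $b_1, b_2 \in L$ appearing in the hypothesis, set $v = \psi^{-1}(b_1)$ and $w = \psi^{-1}(b_2)$; one may assume $b_1 \neq b_2$, since if $b_1 = b_2$ the argument below degenerates harmlessly with $v = w$ and yields $m = |S(v)|$. Then $\kappa_v = \kappa_{b_1}$ and $\kappa_w = \kappa_{b_2}$, so $\kappa_v \cap \kappa_w = \kappa_{b_1} \cap \kappa_{b_2}$; combining this with Corollary~\ref{SizeOfS} and Lemma~\ref{IntersectingKappas} gives $k_v = k_{b_1}$, $k_w = k_{b_2}$, and $k_{\{v,w\}} = k_{\{b_1,b_2\}}$ (reading $k_{\{b_1,b_2\}}$ as $\dim(\kappa_{b_1}\cap\kappa_{b_2})$). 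Thus the hypothesis says exactly that
\[|V_{n-1}| - k_v - k_w + k_{\{v,w\}} + 1 = m \geq 2.\]

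Next I would rerun, for the genuine vertices $v$ and $w$, the inclusion--exclusion computation from the Example. Since $v$ and $w$ are vertices, $k_v^v = k_w^w = 1$, so the identity $|S(A)| = |V_{n-1}| - k_A + k_A^A$ gives $|S(v)| = |V_{n-1}| - k_v + 1$, $|S(w)| = |V_{n-1}| - k_w + 1$, and $|S(v) \cup S(w)| = |S(\{v,w\})| = |V_{n-1}| - k_{\{v,w\}} + k_{\{v,w\}}^{\{v,w\}}$. The number $k_{\{v,w\}}^{\{v,w\}}$ of $\sim_{\{v,w\}}$-classes of $S(v) \cup S(w)$ equals $1$ if $S(v) \cap S(w) \neq \emptyset$ and equals $2$ otherwise, because the relation whose transitive closure defines $\sim_{\{v,w\}}$ already links all of $S(v)$ into a single class and all of $S(w)$ into a single class, and these two classes coalesce exactly when they share a vertex. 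Substituting into $|S(v) \cap S(w)| = |S(v)| + |S(w)| - |S(v) \cup S(w)|$ yields
\[|S(v) \cap S(w)| = |V_{n-1}| - k_v - k_w + k_{\{v,w\}} + 2 - k_{\{v,w\}}^{\{v,w\}}.\]
If $S(v) \cap S(w) = \emptyset$, the left-hand side is $0$ and $k_{\{v,w\}}^{\{v,w\}} = 2$, which forces $|V_{n-1}| - k_v - k_w + k_{\{v,w\}} + 1 = 1$, contradicting $m \geq 2$. Hence $S(v) \cap S(w) \neq \emptyset$, so $k_{\{v,w\}}^{\{v,w\}} = 1$ and $|S(v) \cap S(w)| = |V_{n-1}| - k_v - k_w + k_{\{v,w\}} + 1 = m$, as asserted.

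The only genuinely delicate point is the treatment of $k_{\{v,w\}}^{\{v,w\}}$: the numerical expression $|V_{n-1}| + k_{\{v,w\}} - k_v - k_w + 1$ collapses to $1$ precisely in the degenerate case $S(v) \cap S(w) = \emptyset$, so it is the hypothesis $m \geq 2$ --- and nothing else --- that excludes that case and lets one read off the value of $|S(v) \cap S(w)|$. Everything else is a direct application of Proposition~\ref{UpperVertexLikeBijection}, Corollary~\ref{SizeOfS}, and the out-degree identity $|S(A)| = |V_{n-1}| - k_A + k_A^A$, together with the inclusion--exclusion already spelled out in the Example; no new machinery is required.
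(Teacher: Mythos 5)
Your proof is correct and follows essentially the same route as the paper, whose ``proof'' is the inclusion--exclusion computation in the Example immediately preceding the statement; you simply make explicit the transport step via Proposition~\ref{UpperVertexLikeBijection} (including the correct reading of $b_1,b_2$ as elements of $L$ and of $k_{\{b_1,b_2\}}$ as $\dim(\kappa_{b_1}\cap\kappa_{b_2})$) and the case analysis on $k_{\{v,w\}}^{\{v,w\}}$ that the paper leaves implicit. No gaps.
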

\end{example}

\subsection{Constructing an Upper Vertex-Like Basis}
Our goal in this section is to prove the following theorem, which shows that we can construct an upper vertex-like basis for $B_n$ using only the information given to us by the doubly-graded algebra $B(\Gamma)$.  Our main result is the following theorem:

\begin{theorem}
\label{UpperVertexLikeL}
Let $L=\left\{b_1,b_2,\ldots,b_{|V_n|}\right\}$ be a basis for $B_n$, such that for any $i$ and for any $a\in\left(V_n\setminus\text{span}\{b_1,\ldots,b_{i-1}\}\right)$, we have $k_{b_i}\geq k_a$.  Then $L$ is an upper vertex-like basis.
\end{theorem}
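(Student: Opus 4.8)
The plan is to use the characterization in Proposition~\ref{UpperVertexLikeBijection}: it suffices to build a bijection $\psi:V_n\to L$ with $\kappa_{\psi(v)}=\kappa_v$ for every $v\in V_n$. The basis $L$ comes with an ordering $b_1,\dots,b_{|V_n|}$ chosen greedily so that each $b_i$ maximizes $k_{b_i}$ (equivalently $\dim\kappa_{b_i}$, by Corollary~\ref{SizeOfS}) among all vertices in $V_n$ not already in $\mathrm{span}\{b_1,\dots,b_{i-1}\}$. The first step is to record the basic monotonicity facts I will lean on: by Proposition~\ref{Kappa}, if $a=\sum\alpha_v v\in B_n$ and $\alpha_w\neq0$ then $\kappa_a\subseteq\kappa_w$ and $k_a\le k_w$, with equality in one iff equality in the other; and by Lemma~\ref{IntersectingKappas}, $\kappa_a=\bigcap_{\alpha_v\neq0}\kappa_v$. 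In particular, for any $b\in B_n$ there is always some vertex $w$ in the support of $b$ with $k_w\ge k_b$, so the maximum defining $b_i$ is genuinely attained at a vertex, and $k_{b_i}=\max\{k_v : v\in V_n,\ v\notin\mathrm{span}\{b_1,\dots,b_{i-1}\}\}$.

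Next I would set up the induction. Process $i=1,2,\dots,|V_n|$ in order, maintaining the invariant that $\{b_1,\dots,b_{i-1}\}$ and some distinguished set $\{v_1,\dots,v_{i-1}\}\subseteq V_n$ satisfy $\mathrm{span}\{b_1,\dots,b_{i-1}\}=\mathrm{span}\{v_1,\dots,v_{i-1}\}$ and $\kappa_{b_j}=\kappa_{v_j}$ for $j<i$. At stage $i$: pick any vertex $v_i\in V_n$ attaining the maximum $k_{v_i}=\max\{k_v:v\notin\mathrm{span}\{b_1,\dots,b_{i-1}\}\}$; this equals $k_{b_i}$ by the previous paragraph. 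Write $b_i=\sum_{v}\alpha_v v$. I claim $\kappa_{b_i}=\kappa_{v_i}$. Every $v$ in the support of $b_i$ has $\kappa_{b_i}\subseteq\kappa_v$ and $k_v\ge k_{b_i}=k_{v_i}$; if any such $v$ lies outside $\mathrm{span}\{b_1,\dots,b_{i-1}\}$, then by maximality $k_v=k_{b_i}$, forcing $\kappa_{b_i}=\kappa_v$ via the equality clause of Proposition~\ref{Kappa}. If instead every $v$ in the support lies in $\mathrm{span}\{b_1,\dots,b_{i-1}\}$, then $b_i\in\mathrm{span}\{b_1,\dots,b_{i-1}\}$, contradicting that $L$ is a basis. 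So there is a support vertex $v$ with $\kappa_{b_i}=\kappa_v$; and $v\notin\mathrm{span}\{b_1,\dots,b_{i-1}\}$, so $v$ is a legitimate choice and I rename it $v_i$, giving $\kappa_{b_i}=\kappa_{v_i}$. Finally $b_i\notin\mathrm{span}\{v_1,\dots,v_{i-1}\}=\mathrm{span}\{b_1,\dots,b_{i-1}\}$ extends the spanning invariant. After all $|V_n|$ stages, $v_1,\dots,v_{|V_n|}$ are distinct (linearly independent), hence all of $V_n$, and $\psi(v_i)=b_i$ is the desired bijection.

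The step I expect to be the main obstacle is the dichotomy argument inside the induction — specifically, confirming that at least one vertex $v$ in the support of $b_i$ \emph{simultaneously} satisfies $\kappa_{b_i}=\kappa_v$ and $v\notin\mathrm{span}\{b_1,\dots,b_{i-1}\}$, so that the newly matched vertex is available to be chosen. The "equality in (i) iff equality in (ii)" clause of Proposition~\ref{Kappa} combined with the maximality of $k_{b_i}$ handles the out-of-span support vertices uniformly, and a basis vector cannot be supported entirely on previously chosen coordinates; I would make sure to state that clean contradiction explicitly. One subtlety to check carefully is that the greedy maximum at stage $i$ in the \emph{definition} of $L$ (maximum over $a\in V_n\setminus\mathrm{span}\{b_1,\dots,b_{i-1}\}$) matches the maximum I use in the \emph{proof}; since $k_{b_i}\ge k_a$ for all such $a$ by hypothesis and $k_{b_i}$ is itself attained at a vertex outside the span, the two maxima coincide, and the argument goes through.
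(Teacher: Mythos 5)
Your argument correctly establishes one half of what is needed: for each $b_i$, some vertex $v$ in its support lies outside $\mathrm{span}\{b_1,\dots,b_{i-1}\}$ (else $b_i$ would lie in that span, contradicting linear independence), and for such a $v$ the two inequalities $k_v\geq k_{b_i}$ (Proposition~\ref{Kappa}) and $k_{b_i}\geq k_v$ (the hypothesis on $L$) force $\kappa_{b_i}=\kappa_v$. This is essentially the paper's intermediate proposition that every $\kappa_b$, $b\in L$, is realized by a vertex. The gap is in the bijectivity. Your ``spanning invariant'' $\mathrm{span}\{b_1,\dots,b_i\}=\mathrm{span}\{v_1,\dots,v_i\}$ is asserted but never established, and it is in fact false for this construction: choosing $v_i$ to be one support vertex of $b_i$ does not put $b_i$ into $\mathrm{span}\{v_1,\dots,v_i\}$, nor $v_i$ into $\mathrm{span}\{b_1,\dots,b_i\}$. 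Concretely, take $V_n=\{u,v\}$ with $\kappa_u=\kappa_v$ and $L=\{b_1,b_2\}=\{u+v,\,u\}$, which satisfies the hypothesis of the theorem. At stage $1$ your procedure may pick $v_1=u$ (both support vertices are legitimate choices), and at stage $2$ the only support vertex of $b_2$ is $u$, which does lie outside $\mathrm{span}\{u+v\}$, so $v_2=u=v_1$ and $\psi$ is not a bijection. Already $\mathrm{span}\{v_1\}=\mathrm{span}\{u\}\neq\mathrm{span}\{u+v\}$ shows the invariant fails at the first step.

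What is missing is precisely a multiplicity count: for each $w\in V_n$ one must show
\[
|\{b\in L : \kappa_b=\kappa_w\}| = |\{v\in V_n : \kappa_v=\kappa_w\}|,
\]
after which the bijection can be defined subspace-by-subspace. This is the content of the paper's Proposition~\ref{SameSizeYay}, proved by comparing the images of $\{b\in L: k_b=k_w\}$ and $\{v\in V_n: k_v=k_w\}$ in the quotient of consecutive terms of the filtration $F_i=\mathrm{span}\{v\in V_n: k_v\geq i\}$, using that $L$ is compatible with this filtration. Your greedy matching cannot substitute for this count unless you constrain the choice of $v_i$ to avoid previously chosen vertices, and then you would still owe an argument (equivalent to the count above) that such a choice always remains available.
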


The arguments involved in the proof of this theorem rely heavily on the chain of subspaces
\[F_{|V_{n-1}|}\subseteq F_{|V_{n-1}|-1}\subseteq\ldots F_2\subseteq F_1=B_n\]
given by
\[F_i=span\{v\in V_n : k_v\geq i\}.\]
Notice that $F_{|V_{n-1}|}$ is the subspace of $B_n$ generated by the vertices that annihilate all of $B_{n-1}$ by left multiplication.  This is exactly the collection of vertices with out-degree one.  Considering the presentation of $\kappa_v$ given in Lemma~\ref{StructureOfKappa}, we can see that $F_{{|V_{n-1}|-1}}$ is the subspace spanned by the vertices with out-degree one or two, and that in general, $F_{|V_{n-1}|-k}$ is the subspace spanned by the vertices with out-degree less than or equal to $k+1$. We will use this notation for the duration of this section.  

\begin{prop}
\label{Fi}
For $i=1,\ldots,|V_{n-1}|,$ let $F_i$ be defined as above.  Then we have
\[F_i=span\{a\in B_n : k_a\geq i\}\]
\end{prop}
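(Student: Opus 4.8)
The plan is to prove the two inclusions separately; the nontrivial one follows immediately from the monotonicity of $k$ already recorded in Proposition~\ref{Kappa}.

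The inclusion $F_i\subseteq span\{a\in B_n : k_a\geq i\}$ is immediate: every vertex $v\in V_n$ with $k_v\geq i$ is in particular an element $a\in B_n$ with $k_a\geq i$, and $F_i$ is by definition the span of such vertices.

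For the reverse inclusion, I would take an arbitrary $a=\sum_{v\in V_n}\alpha_v v\in B_n$ with $k_a\geq i$ and show $a\in F_i$. Let $A_a=\{v\in V_n : \alpha_v\neq 0\}$ be the support of $a$. By Proposition~\ref{Kappa}(ii), for every $w\in A_a$ we have $k_a\leq k_w$, hence $k_w\geq k_a\geq i$; so every vertex occurring in $a$ lies in $\{v\in V_n : k_v\geq i\}$. Therefore $a=\sum_{v\in A_a}\alpha_v v$ is a linear combination of vertices that span $F_i$, giving $a\in F_i$. Combining the two inclusions yields $F_i=span\{a\in B_n : k_a\geq i\}$.

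I do not expect a genuine obstacle here; the one point worth flagging is the direction of the inequality. Proposition~\ref{Kappa} (itself a consequence of Lemmas~\ref{IntersectingKappas} and \ref{StructureOfKappa}) tells us that passing from an element $a$ to any vertex in its support can only increase $k$, so the condition $k_a\geq i$ forces $a$ to be supported on the high-$k$ vertices — which is exactly what makes $\{a\in B_n : k_a\geq i\}$ collapse onto $F_i$.
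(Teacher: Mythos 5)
Your proof is correct and follows essentially the same route as the paper: the paper's own argument for the nontrivial inclusion is precisely the observation that Proposition~\ref{Kappa} places $a$ in $span\{v\in V_n : k_v\geq k_a\}\subseteq F_i$, which is what you spell out via part (ii) applied to each vertex in the support of $a$.
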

\begin{proof}
Clearly, $F_i\subseteq span\{a\in B_n : k_a\geq i\}$.  To obtain the opposite inclusion, suppose $a\in B_n$ satisfies $k_a\geq i$.  We know from Proposition~\ref{Kappa} that
\[a\in span\{v\in V_n : k_v\geq k_a\}\subseteq F_i,\]
and so $span\{a\in B_n: k_a\geq i\}\subseteq F_i$, and our proof is complete.
\end{proof}

\begin{defn} 
We say that a basis $\{x_1,\ldots,x_{\dim(V)}\}$ for a vector space $V$ is \textbf{compatible} with a chain of subspaces
\[V_0\subseteq V_1\subseteq V_2\subseteq\ldots\subseteq V\]
if for each $i$, $span\{x_1,\ldots,x_{\dim(V_i)}\}=V_i$.
\end{defn}

\begin{prop}
Let $L$ be defined as in Theorem~\ref{UpperVertexLikeL}.  Then $L$ is compatible with
\[F_{|V_{n-1}|}\subseteq F_{|V_{n-1}|-1}\subseteq\ldots F_2\subseteq F_1=B_n.\]
\end{prop}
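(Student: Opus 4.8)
The plan is to show that the basis $L$ from Theorem~\ref{UpperVertexLikeL} satisfies the compatibility condition with the chain $F_{|V_{n-1}|}\subseteq\ldots\subseteq F_1=B_n$ directly from the defining property of $L$ and Proposition~\ref{Fi}. Recall that $L=\{b_1,\ldots,b_{|V_n|}\}$ is chosen greedily so that each $b_i$ has $k_{b_i}$ maximal among all $a\in V_n$ not in $\mathrm{span}\{b_1,\ldots,b_{i-1}\}$; and Proposition~\ref{Fi} tells us $F_i=\mathrm{span}\{a\in B_n : k_a\geq i\}$. So I need: for each $j$, $\mathrm{span}\{b_1,\ldots,b_{\dim F_j}\}=F_j$.

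First I would observe that the sequence $k_{b_1},k_{b_2},\ldots,k_{b_{|V_n|}}$ is non-increasing. Indeed, when $b_{i+1}$ is selected, $b_i$ itself is a candidate vertex lying outside $\mathrm{span}\{b_1,\ldots,b_{i-1}\}$... wait — more carefully, $b_i\in V_n$ and $b_i\notin\mathrm{span}\{b_1,\ldots,b_{i-1}\}$ since $L$ is a basis, so $k_{b_i}$ was an achievable value at step $i$, hence $k_{b_i}\geq k_a$ was required for the choice at step $i$; and at step $i+1$ the pool of available vertices has only shrunk, so the maximum possible $k$-value cannot increase, giving $k_{b_{i+1}}\leq k_{b_i}$. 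Next I would fix a threshold value $m$ and let $d=\dim F_m$; I want $\mathrm{span}\{b_1,\ldots,b_d\}=F_m$. For the inclusion $\mathrm{span}\{b_1,\ldots,b_d\}\subseteq F_m$, I would argue that $k_{b_1},\ldots,k_{b_d}$ are all $\geq m$: if some $b_i$ with $i\leq d$ had $k_{b_i}<m$, then by the non-increasing property all of $b_i,\ldots,b_{|V_n|}$ have $k$-value $<m$, so $F_m\subseteq\mathrm{span}\{b_1,\ldots,b_{i-1}\}$ would have dimension $\leq i-1<d$, a contradiction. Hence each $b_i$ ($i\le d$) lies in $F_m$ by Proposition~\ref{Fi}. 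For the reverse inclusion, $\mathrm{span}\{b_1,\ldots,b_d\}$ is a $d$-dimensional subspace of the $d$-dimensional space $F_m$, so they coincide.

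The one point requiring a little care — and the step I expect to be the main obstacle — is verifying that $k_{b_i}\geq m$ forces $b_i\in F_m$, i.e. making sure Proposition~\ref{Fi} applies: $b_i$ is an element of $B_n$ with $k_{b_i}\geq m$, and Proposition~\ref{Fi} says precisely $F_m=\mathrm{span}\{a\in B_n : k_a\geq m\}$, so $b_i\in F_m$. There is also a mild bookkeeping issue that the thresholds $i$ appearing as subscripts of $F_i$ range over $1,\ldots,|V_{n-1}|$ while the relevant $k$-values of elements of $B_n$ also lie in this range (since $\dim\kappa_a=k_a\le|V_{n-1}|$ by Corollary~\ref{SizeOfS}), so every subspace in the chain is hit; I would note this briefly. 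Assembling these observations gives compatibility of $L$ with the full chain.
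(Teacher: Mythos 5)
Your overall strategy is the paper's: combine the greedy defining property of $L$ with Proposition~\ref{Fi} and a dimension count to get $\mathrm{span}\{b_1,\ldots,b_{\dim F_m}\}=F_m$. But the written argument has a genuine flaw at the point where you establish that $k_{b_1},k_{b_2},\ldots$ is non-increasing: you assert $b_i\in V_n$. That is false in general --- the $b_i$ are arbitrary elements of $B_n=\mathrm{span}(V_n)$, not vertices, and indeed the whole point of Theorem~\ref{UpperVertexLikeL} is that $L$ is built from the algebra without knowing which elements of $B_n$ are vertices. The hypothesis only requires $k_{b_i}\geq k_a$ for \emph{vertices} $a$ outside $\mathrm{span}\{b_1,\ldots,b_{i-1}\}$, so $b_i$ is not automatically a candidate at step $i+1$. (Monotonicity is in fact true, but one must route through Proposition~\ref{Kappa}: since $L$ is independent, some vertex $w$ in the support of $b_{i+1}$ lies outside $\mathrm{span}\{b_1,\ldots,b_{i-1}\}$, whence $k_{b_{i+1}}\leq k_w\leq k_{b_i}$.)

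The second, related soft spot is the inference ``all of $b_i,\ldots,b_{|V_n|}$ have $k$-value $<m$, so $F_m\subseteq\mathrm{span}\{b_1,\ldots,b_{i-1}\}$.'' Knowing the $k$-values of the later basis elements does not by itself control which subspace they span. The correct (and shorter) justification bypasses monotonicity entirely: $F_m$ is spanned by the \emph{vertices} $v$ with $k_v\geq m$, and if any such $v$ lay outside $\mathrm{span}\{b_1,\ldots,b_{i-1}\}$, the defining property of $b_i$ would force $k_{b_i}\geq k_v\geq m$, contradicting $k_{b_i}<m$; hence $F_m\subseteq\mathrm{span}\{b_1,\ldots,b_{i-1}\}$, whose dimension is less than $\dim F_m$. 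This is exactly the paper's argument, stated there in contrapositive form: for $j<\dim F_m$ there is a vertex $v\in F_m\setminus\mathrm{span}\{b_1,\ldots,b_j\}$ with $k_v\geq m$, so $k_{b_{j+1}}\geq m$ and $b_{j+1}\in F_m$ by Proposition~\ref{Fi}. With that substitution your proof closes up and coincides with the paper's; the monotonicity observation can simply be deleted.
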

\begin{proof}
Recall that $F_i$ is spanned by $\{v\in V_n: k_v\geq i\}$.  Thus if $j<\dim(F_i)$, then there exists $v\in F_i\setminus span\{b_1,\ldots,b_j\}$ satisfying $k_v\geq i$.  By the definition of $L$, we have $k_{b_{j+1}}\geq k_v\geq i$.  It follows that $b_{j+1}\in F_i$.  We conclude that
\[span\{b_1,\ldots,b_{\dim(F_i)}\}\subseteq span\{a\in B_n : k_a\geq i\}=F_i,\]
and so
\[span\{b_1,\ldots,b_{\dim(F_i)}\}=F_i\]
\end{proof}

This is sufficient to prove Proposition~\ref{OutDegree}, but it falls short of giving us a construction for an upper vertex-like basis. According to  Proposition~\ref{UpperVertexLikeBijection}, an upper vertex-like basis for $B_n$ consists of a linearly independent collection of algebra elements whose $\kappa$-subspaces ``match'' the $\kappa$-subspaces of the elements of $V_n$.  We still need to show that the $\kappa$-subspaces of $L$ appear as $\kappa$-subspaces associated to vertices.

\begin{prop}
Let $L$ be a basis for $B_n$ satisfying the hypothesis of Theorem ~\ref{UpperVertexLikeL}.  Then for each $b\in L$, there exists $w\in V_n$ such that $\kappa_b=\kappa_w$.
\end{prop}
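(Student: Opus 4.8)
The plan is to induct on $i$, where $b=b_i$, and show that the span of $b_1,\ldots,b_i$ is spanned by vertices, in a way that is "compatible" with both the chain $F_{|V_{n-1}|}\subseteq\cdots\subseteq F_1$ and with the $\kappa$-subspaces. Concretely, I would prove the stronger statement: for each $i$, there is a set of vertices $\{w_1,\ldots,w_i\}\subseteq V_n$ with $\operatorname{span}\{b_1,\ldots,b_i\}=\operatorname{span}\{w_1,\ldots,w_i\}$ and $\kappa_{b_j}=\kappa_{w_j}$ for all $j\le i$. The base case and the compatibility with the chain $\{F_i\}$ are already handled by the preceding proposition, so the real content is matching the $\kappa$-subspaces.

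The key step is the following. Fix $b=b_i$ and let $k_b=k$. We know from Proposition~\ref{Kappa} and Proposition~\ref{Fi} that $b\in F_k=\operatorname{span}\{v\in V_n:k_v\ge k\}$, and by the defining property of $L$ every vertex not already in $\operatorname{span}\{b_1,\ldots,b_{i-1}\}$ has $k_v\le k$. So $b=\sum\alpha_v v$ where each $v$ in the support has $k_v\ge k$; since also $\kappa_b=\bigcap_{\alpha_v\ne0}\kappa_v\subseteq\kappa_v$ forces $k_b\le k_v$, in fact every $v$ in the support has $k_v=k$ \emph{and} $\kappa_v\supseteq\kappa_b$ with equal dimension, hence $\kappa_v=\kappa_b$ for every $v$ in the support of $b$. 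That already produces a vertex $w=v$ with $\kappa_w=\kappa_b$, which is exactly what the proposition asks. (The stronger inductive statement then also needs that such a $w$ can be chosen outside $\operatorname{span}\{b_1,\ldots,b_{i-1}\}$, which follows because $b\notin\operatorname{span}\{b_1,\ldots,b_{i-1}\}$, so at least one vertex in its support is not in that span either.)

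I would organize the write-up as: first invoke Proposition~\ref{Fi} to locate $b$ inside $F_{k_b}$; then use the hypothesis on $L$ to argue every vertex in the support of $b$ has $k_v\ge k_b$; then use Proposition~\ref{Kappa}(i) and Corollary~\ref{SizeOfS} (dimension of $\kappa$ equals $k$) to upgrade "$\kappa_b\subseteq\kappa_v$ with $k_b\le k_v$" and "$k_v\le k_b$" into the equality $\kappa_v=\kappa_b$; finally pick any such $v$ as the desired $w$.

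The main obstacle is the direction of the two inequalities on $k_v$. The hypothesis of Theorem~\ref{UpperVertexLikeL} gives an upper bound $k_v\le k_{b_i}$ for vertices outside the span of the earlier $b_j$'s, while the containment $\kappa_b\subseteq\kappa_v$ gives the lower bound $k_b\le k_v$; the delicate point is making sure I am entitled to apply the upper-bound hypothesis to precisely the vertices in the support of $b$, i.e. that those vertices really do lie outside $\operatorname{span}\{b_1,\ldots,b_{i-1}\}$ (or that if one of them lies inside, it causes no harm because then we may instead extract the vertex-witness from the earlier inductive stage). Handling this bookkeeping cleanly — essentially showing the support of $b_i$ meets $V_n\setminus\operatorname{span}\{b_1,\ldots,b_{i-1}\}$ and that vertices from the support already in the span were handled before — is where the argument needs care, and it is where I would spend the most effort.
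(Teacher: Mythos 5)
Your argument is correct, and the core step is routed differently from the paper's. The paper argues by contradiction: by Proposition~\ref{Kappa}, $b$ lies in $\operatorname{span}\{v\in V_n:\kappa_v\supseteq\kappa_b\}$, so if no support vertex had $\kappa_v=\kappa_b$ then $b$ would lie in $\operatorname{span}\{v:\kappa_v\supsetneq\kappa_b\}\subseteq F_{k_b+1}$, and the compatibility of $L$ with the chain $F_{|V_{n-1}|}\subseteq\cdots\subseteq F_1$ (the preceding proposition) would place $b$ in $\operatorname{span}\{b'\in L:k_{b'}\geq k_b+1\}$, forcing $b\notin L$. You instead produce the witness directly: choose a support vertex $v_0$ of $b_i$ outside $\operatorname{span}\{b_1,\ldots,b_{i-1}\}$ (one exists, else $b_i$ would lie in that span), apply the defining greedy inequality of $L$ to get $k_{v_0}\leq k_{b_i}$, and combine with $\kappa_{b_i}\subseteq\kappa_{v_0}$ (hence $k_{b_i}\leq k_{v_0}$) and the dimension count $\dim\kappa_a=k_a$ to force $\kappa_{v_0}=\kappa_{b_i}$. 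This is a perfectly sound and arguably more elementary route, since it bypasses the compatibility proposition entirely; the paper's version has the mild advantage of reusing machinery already established. Two caveats on your write-up: the intermediate assertion that \emph{every} support vertex of $b$ satisfies $\kappa_v=\kappa_b$ is both unnecessary and false in general (a support vertex lying inside $\operatorname{span}\{b_1,\ldots,b_{i-1}\}$ may have $\kappa_v\supsetneq\kappa_b$), so you should state only the single-witness version you correctly isolate at the end; and the enclosing induction on $i$ with the ``stronger statement'' about equality of spans is superfluous for this proposition --- your key step already proves the claim for each $b_i$ individually with no inductive hypothesis.
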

\begin{proof}
By Proposition~\ref{Kappa}, any element $a\in B_n$ is in $span\{v\in V_n : \kappa_v\supseteq \kappa_a\}$.  Thus either there exists $w\in V_n$ with $\alpha_w\neq 0$ and $\kappa_a=\kappa_w$, or we have
\[a\in span\{v\in V_n : \kappa_v\supsetneq\kappa_a\}\subseteq span\{v\in V_n : k_v\geq k_a+1\}= F_{(k_a+1)}\]
By Proposition~\ref{Fi}, this gives us
\[a\in span\{b\in L : k_b\geq k_a+1\},\]
and so $a\notin L$.

It follows that for any $b=\sum_{v\in V_n}\beta_vv\in L$, there exists $w\in V_n$ such that $\beta_w\neq 0$ and $\kappa_b=\kappa_w$.
\end{proof}

All that remains is to show that the multiplicity with which each $\kappa$-subspace appears in $L$ matches the multiplicity with which it appears in $V_n$.

\begin{prop}
\label{SameSizeYay}
Let $L$ be a basis for $B_n$, satisfying the hypothesis of Theorem~\ref{UpperVertexLikeL}.  Then for each $w\in V_n$, we have
\[|\{b\in L : \kappa_b=\kappa_w\}|= |\{v\in V_n : \kappa_v=\kappa_w\}|\]
\end{prop}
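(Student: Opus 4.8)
The plan is to track, for each value of the subspace $\kappa_w$, how many elements of $L$ and how many vertices of $V_n$ carry that exact subspace, and to prove the two counts agree by first matching them ``in bulk'' at each dimension level $i=k_w$ and then refining the count to the level of individual subspaces. Throughout I will use freely that $V_n$ is a basis for $B_n$, so that any subspace spanned by a subset of $V_n$ has dimension equal to the number of vertices in that subset.

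First I would record the bulk statement. Since $F_i=\text{span}\{v\in V_n:k_v\geq i\}$ is spanned by a subset of the basis $V_n$, we get $\dim F_i=|\{v\in V_n:k_v\geq i\}|$, hence $|\{v\in V_n:k_v=i\}|=\dim F_i-\dim F_{i+1}$ (with the convention $F_{|V_{n-1}|+1}=0$). On the $L$ side I combine the hypothesis of Theorem~\ref{UpperVertexLikeL} with the already-established compatibility of $L$ with the chain $F_{|V_{n-1}|}\subseteq\cdots\subseteq F_1$: if $j\leq\dim F_i$ then $\text{span}\{b_1,\dots,b_{j-1}\}$ is a proper subspace of $F_i$, so some vertex with $k_v\geq i$ lies outside it and the hypothesis forces $k_{b_j}\geq i$; conversely $\{b\in L:k_b\geq i\}\subseteq\{a\in B_n:k_a\geq i\}\subseteq F_i=\text{span}\{b_1,\dots,b_{\dim F_i}\}$ by Proposition~\ref{Fi}, so no later $b_j$ has $k_{b_j}\geq i$. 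Thus $\{b\in L:k_b\geq i\}=\{b_1,\dots,b_{\dim F_i}\}$ and $|\{b\in L:k_b=i\}|=\dim F_i-\dim F_{i+1}$ as well.

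Next I fix $w\in V_n$, write $K=\kappa_w$ and $i=\dim K=k_w$, and set $H_K=\text{span}\{v\in V_n:k_v\geq i+1\text{ or }\kappa_v=K\}$. The crucial point is that every $b\in L$ with $\kappa_b=K$ lies in $H_K$: writing $b=\sum_v\beta_vv$, Proposition~\ref{Kappa} gives $K=\kappa_b\subseteq\kappa_v$ for every $v$ in the support of $b$, and since $\dim K=i$ this means each such $v$ has either $k_v\geq i+1$ or ($k_v=i$ and $\kappa_v=K$). Because $H_K$ is again spanned by a subset of the vertex basis and the sets $\{v:k_v\geq i+1\}$ and $\{v:\kappa_v=K\}$ are disjoint, $\dim H_K=\dim F_{i+1}+|\{v\in V_n:\kappa_v=K\}|$. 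Now $F_{i+1}=\text{span}\{b_1,\dots,b_{\dim F_{i+1}}\}\subseteq H_K$, and by the previous paragraph the elements $b\in L$ with $k_b=i$ are precisely $b_{\dim F_{i+1}+1},\dots,b_{\dim F_i}$; adjoining to $\{b_1,\dots,b_{\dim F_{i+1}}\}$ those among them with $\kappa_b=K$ yields a linearly independent subset of $H_K$, so $|\{b\in L:\kappa_b=K\}|\leq\dim H_K-\dim F_{i+1}=|\{v\in V_n:\kappa_v=K\}|$. Finally I sum this inequality over all distinct subspaces $K$ of the form $\kappa_w$ with $k_w=i$: by the earlier proposition guaranteeing a vertex with the same $\kappa$-subspace, every $b\in L$ with $k_b=i$ has $\kappa_b=\kappa_w$ for some $w$, so the left-hand sides add to $|\{b\in L:k_b=i\}|=\dim F_i-\dim F_{i+1}$, while the right-hand sides add to $|\{v\in V_n:k_v=i\}|=\dim F_i-\dim F_{i+1}$; since the totals coincide and each summand on the left is at most the matching summand on the right, equality holds term by term, which is the assertion.

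The main obstacle is the step isolating $H_K$ and showing $\kappa_b=K$ forces $b\in H_K$: the hypothesis on $L$ and the compatibility with the chain $\{F_i\}$ only see the numbers $k_v=\dim\kappa_v$, not the subspaces themselves, so to pass from ``the dimension level is right'' to ``the subspace is exactly $K$'' one has to re-enter the genuine subspace structure through Proposition~\ref{Kappa}. Everything else is bookkeeping with dimensions of spans of subsets of the vertex basis.
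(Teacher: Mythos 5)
Your proof is correct and follows essentially the same route as the paper: both rest on Proposition~\ref{Kappa} (an element with $\kappa_b=\kappa_w$ lies in the span of the vertices $v$ with $\kappa_v\supseteq\kappa_w$), on the compatibility of $L$ with the chain $F_{|V_{n-1}|}\subseteq\cdots\subseteq F_1$, and on a dimension count over the level $i=k_w$ that forces the termwise inequalities into equalities. The only difference is presentational — the paper phrases the count in the quotient $F_{k_w}/F_{k_w+1}$ and calls the conclusion ``a simple dimension argument,'' whereas you carry it out explicitly inside $B_n$ via the subspaces $H_K$; your version usefully spells out the summation over distinct $\kappa$-subspaces that the paper leaves implicit.
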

\begin{proof}
Notice that 
\[\left\{b+F_{(k_w+1)} : b\in L, k_b=k_w\right\}\]
and
\[\left\{v+V_{(k_w+1)} : v\in V_n, k_v=k_w\right\}\]
are both bases for the space $F_{(k_w+1)}/F_{k_w}$.  We know that for each $b\in L$ with $\kappa_b=\kappa_w$, we have $b\in span\{v\in V_n : \kappa_v\supseteq \kappa_w\}$, and so
\[span\left\{b+F_{k_w+1} : b\in L, \kappa_b=\kappa_w\right\}\subseteq span\left\{v + F_{k_w+1} : v\in V_n, \kappa_v=\kappa_w\right\}\]
A simple dimension argument allows us to conclude that these spans are equal, and thus
\[|\{b\in L : \kappa_b=\kappa_w\}|= |\{v\in V_n : \kappa_v=\kappa_w\}|\]
\end{proof}

Now we are ready to prove that $L$ is an upper vertex-like basis.

\begin{proof}[Proof of Theorem~\ref{UpperVertexLikeL}]
By Corollary~\ref{SameSizeYay}, we know that for every $w\in V_{n}$, we have
\[|\{b\in L : \kappa_b=\kappa_w\}| = |\{v\in V_n : \kappa_v = \kappa_w\}|\]
Thus there exists a bijective map $\phi: V_n\rightarrow L$ such that $\kappa_{\phi(v)}=\kappa_v$.
\end{proof}

\section{Uniqueness Results}
\label{chap:UniquenessResults}

Upper vertex-like bases allow us to find the collection of $\kappa$-subspaces associated to the vertices, and to count the multiplicity of each of these $\kappa$-subspaces.  However, there are cases in which we can do even better---identifying the linear span of a particular vertex in $B(\Gamma)$.  Recall that for any $a\in B_n$,
\[a\in span\{v\in V_n : \kappa_v\supseteq \kappa_a\}\]
Thus if there exists a vertex $w\in V_n$ such that
\[\{v\in V_n :\kappa_v\supseteq \kappa_w\}=\{w\},\]
then for any element $b$ of an upper vertex-like basis $L$ for $B_n$ satisfying $\kappa_b=\kappa_w$, we have $b\in span\{w\}$.  In cases where this condition is common, this allows us to completely determine the structure of $\Gamma$ from the algebra $B(\Gamma)$.  

We will use both poset and layered-graph notation in the discussion that follows.  We will identify the poset $P$ with the layered graph associated to its Hasse diagram.  For ease of notation, we will write $P_{\geq j}$ for $\bigcup_{i\geq j}P_i$.

\subsection{Non-Nesting Posets}
\label{sec:NonNesting}

\begin{defn}
Let $P$ be a ranked poset with a unique minimal vertex $*$.  We will say that $P$ has the \textbf{non-nesting property} if for any two distinct elements $p$ and $q$ with out-degree greater than 1, we have $S(p)\not\subseteq S(q)$.
\end{defn}

\begin{theorem}
\label{NonNesting}
Let $P$ be a finite poset with the non-nesting property, such that $|S(p)|>1$ whenever $|p|>1$.  If $Q$ is a poset satisfying $Q\sim_B P$, then $P_{\geq2}\cong Q_{\geq2}$
\end{theorem}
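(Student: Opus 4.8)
The plan is to work level-by-level, starting from the top level of $P$ and descending, using upper vertex-like bases to transport structural data across the isomorphism $B(P)\cong B(Q)$. Fix a doubly-graded isomorphism $\psi:B(P)\to B(Q)$ and write $n$ for the top level of $P$. By Proposition~\ref{OutDegree}, the multiset of out-degrees at each level is a $\sim_B$-invariant, so in particular $P$ and $Q$ have the same number of levels and $|P_i|=|Q_i|$ for all $i$; this also shows every vertex of $Q$ at level $>1$ has out-degree $>1$. The key point is that, because $P$ has the non-nesting property, for each $w\in P_n$ with $|S(w)|>1$ the set $\{v\in P_n:\kappa_v\supseteq\kappa_w\}$ equals $\{w\}$: if $\kappa_v\supseteq\kappa_w$ then by Proposition~\ref{Kappa} (the $|S(w)|>1$ case) we get $S(w)\subseteq S(v)$, and non-nesting forces $v=w$. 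Hence, as noted in the text preceding the theorem, for any upper vertex-like basis element $b$ of $B_n$ with $\kappa_b=\kappa_w$ we have $b\in\mathrm{span}\{w\}$, i.e. $b=\lambda w$ for some scalar $\lambda\neq0$.

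Next I would run the construction of Theorem~\ref{UpperVertexLikeL} \emph{inside $B(Q)$}: choose a basis $L$ for $B(Q)_{1,n}$ greedily by decreasing $k$-value, so $L$ is upper vertex-like. Pulling $L$ back through $\psi$ gives a basis $\psi^{-1}(L)$ of $B(P)_{1,n}$ whose elements have the same $\kappa$-dimensions (since $\psi$ is a graded isomorphism and carries $\kappa$-subspaces to $\kappa$-subspaces by the analogue of Theorem~\ref{IsoConditions}); by the greedy characterization, $\psi^{-1}(L)$ is itself an upper vertex-like basis for $B(P)_{1,n}$. By the previous paragraph each element of $\psi^{-1}(L)$ is a scalar multiple of a vertex of $P_n$, and since $\psi^{-1}(L)$ is a basis of size $|P_n|=|Q_n|$, after rescaling we get a bijection $\beta_n:P_n\to L$ with $\beta_n(w)=\lambda_w w$ mapped by $\psi$ into $L$; composing with $\psi$, we obtain that $\psi$ restricted to level $n$ sends each line $\mathbb{F}w$ ($w\in P_n$) to a line $\mathbb{F}w'$ for a uniquely determined $w'\in Q_n$, giving a bijection $P_n\to Q_n$. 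Crucially, $\psi(\kappa_w)=\kappa_{w'}$, and by Lemma~\ref{StructureOfKappa} the subspace $\kappa_w$ determines the partition $\mathscr{C}_w$ of $P_{n-1}$, hence (since $|S(w)|>1$) determines $S(w)$ as $\{u\in P_{n-1}:u\notin\kappa_w\}$ up to the identification of $P_{n-1}$ with $Q_{n-1}$ that we establish inductively at the next level down.

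The induction is the following: I would prove by downward induction on $j$ (from $n$ down to $2$) that $\psi$ induces a bijection $P_j\to Q_j$ under which covering relations from $P_j$ to $P_{j-1}$ match those of $Q$ — more precisely, that there is a graded algebra isomorphism $B(P|_j)\to B(Q|_j)$ carrying $V(P)_i$ bijectively to $V(Q)_i$ (up to scalars on each vertex) for $2\le i\le j$ and carrying $B(P)_{1,1}$ onto $B(Q)_{1,1}$ as a subspace. The base case $j=n$ is what I described above, except that at the bottom of the argument the level-$(n-1)$ vertices are not yet individually identified; this is handled because the restricted poset $P|_{n-1}$ again has the non-nesting property and satisfies $|S(p)|>1$ for $|p|>1$ — non-nesting is clearly inherited by restrictions — so the same argument applies to $P|_{n-1}$, and the identifications at successive levels are compatible because $\kappa_w\subseteq B_{n-1}$ is carried correctly once level $n-1$ is pinned down. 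Iterating down to level $2$ gives $P_{\ge2}\cong Q_{\ge2}$ as layered graphs (equivalently as posets via the Hasse-diagram correspondence), which is the claim; note we cannot hope to match $P_1$ and $P_0$, since vertices of out-degree $1$ have $\kappa_v=B_{n-1}$ and are indistinguishable, and the bottom two levels carry exactly the ambiguity illustrated by the automorphism examples earlier in the paper.

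The main obstacle I anticipate is the bookkeeping that makes the level-by-level identifications mutually compatible: after identifying $P_n$ with $Q_n$ via lines, the subspace $\kappa_w$ lives in $B_{n-1}$, and to read off $S(w)$ one needs the identification of $P_{n-1}$ with $Q_{n-1}$ to be the \emph{same} one produced by running the argument one level down — so the cleanest route is to phrase the inductive hypothesis as the existence of a single graded isomorphism of the truncated algebras $B(P|_j)\cong B(Q|_j)$ restricting to scalar-vertex bijections, rather than assembling the levels independently and reconciling them afterward. A secondary technical point is verifying that $\psi$ (and its truncations) genuinely carries $\kappa$-subspaces to $\kappa$-subspaces at each level; this is exactly the content of Theorem~\ref{IsoConditions} applied in both directions, so it is available, but one must check the hypothesis that the relevant graphs are uniform — which holds since restrictions of uniform graphs are uniform and posets with unique minimal vertex give uniform layered graphs in the cases of interest.
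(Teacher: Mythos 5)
Your overall strategy---build an upper vertex-like basis $L$ of $B_{1,i}$ greedily from the algebra alone, observe that it is simultaneously upper vertex-like for $P$ and for $Q$, and use non-nesting to pin each basis element to a single vertex---is exactly the paper's argument. But one step, as you justify it, does not go through: you conclude that $\psi$ carries the line $\mathbb{F}w$ ($w\in P_n$) to a line $\mathbb{F}w'$ with $w'\in Q_n$ by noting that $\psi(\lambda_w w)$ lies in $L$. That only tells you $\psi(\mathbb{F}w)=\mathbb{F}b$ for some $b\in L$ with $\kappa_b=\kappa_{w'}$; to promote this to $b\in\mathbb{F}w'$ you would need $\{r\in Q_n:\kappa_r\supseteq\kappa_{w'}\}=\{w'\}$, i.e.\ the non-nesting property for $Q$, which is not a hypothesis. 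The correct direction is the one the paper uses: expand the vertex $w'$ of $Q$ in the basis $P_n$ of $B_{1,n}$ (after identifying the two algebras via $\psi$); since $\kappa_{w'}=\kappa_w$, Proposition~\ref{Kappa} places $w'$ in $\mathrm{span}\{r\in P_n:\kappa_r\supseteq\kappa_w\}$, and non-nesting of $P$ together with the hypothesis $|S(r)|>1$ collapses that span to $\mathbb{F}w$. This same one-line argument is what you need at every level $2\le i\le n-1$, since the covering relations between levels $i$ and $i-1$ are read off from the equivalence $u'\in\kappa_{w'}\Leftrightarrow u\in\kappa_w$, which requires each $u'\in Q_{i-1}$ to be a nonzero scalar multiple of the corresponding $u\in P_{i-1}$. (A minor slip: $\kappa_v\supseteq\kappa_w$ yields $S(v)\subseteq S(w)$, not the reverse inclusion, but non-nesting rules out either one.)

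Once this is repaired, the downward induction and the ``compatibility'' bookkeeping you anticipate are unnecessary. After identifying $B(P)$ with $B(Q)$, the subspace $\kappa_w\subseteq B_{1,i-1}$ is defined intrinsically as the kernel of left multiplication, so it is literally the same subspace whether one regards it as attached to $w\in P_i$ or to the corresponding $w'=\alpha_w w\in Q_i$; and the scalar matching at level $i-1$ is produced independently at each level by the same argument, with nothing to reconcile afterward. The paper therefore runs the construction once per level and assembles $\xi:P_{\ge2}\to Q_{\ge2}$ in a single pass, the chain $r\in S(p)\Leftrightarrow r\notin\kappa_p\Leftrightarrow\alpha_r r\notin\kappa_{\xi(p)}\Leftrightarrow\xi(r)\in S(\xi(p))$ doing all the work; no inductive hypothesis about truncated algebras $B(P|_j)$ is required.
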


\begin{proof}
Suppose there is a doubly graded algebra isomorphism from $B(P)$ to $B(Q)$.  We will equate the vertices in $B(P)$ with their images in $B(Q)$, allowing us to work inside the algebra $B(Q)$.

For each level $i$, we can find an upper vertex-like basis $L_i$ for $B_i$.  The construction of $L_i$ depends on the algebra, not on the original poset, so $L_i$ is upper vertex-like for both $P$ and for $Q$.  That is to say, there exist two bijections
\[\phi_i:L_i\rightarrow P_i\hspace{12pt}\text{and}\hspace{12pt}\psi_i:L_i\rightarrow Q_i\]
such that $\kappa_{\phi_i(\mathcal{A})}=\kappa_\mathcal{A}$ and $\kappa_{\psi_i(\mathcal{A})}=\kappa_\mathcal{A}$ for every $\mathcal{A}\subseteq L_i$.

Define $\xi_i:P_i\rightarrow Q_i$ by $\xi_i=\psi_i\circ\phi_i\inv$.  This is a bijection, and for any $\mathcal{A}\subseteq P_i$, we have
\[\kappa_{\xi_i(\mathcal{A})}=\kappa_{\phi_i\inv(\psi_i(\mathcal{A}))} =\kappa_{\psi_i(\mathcal{A})}=\kappa_{\mathcal{A}}.\]

For any $q\in Q_i$, $a\in B_i$, we know from Theorem~\ref{Kappa} that $\kappa_a=\kappa_q$ only if 
\[a\in\text{span}\{r\in Q_i: \kappa_q\subseteq\kappa_r\}=\text{span}\{r\in Q_i: S(r)\subseteq S(q)\}\]

For $i>1$, the non-nesting property tells us that the rightmost set is equal to $span\{q\}$, and so $a$ is a scalar multiple of $q$.  In particular, for each $p\in P_i$, $\xi_i(p)$ is a scalar multiple of $p$.  We will write $\xi_i(p)=\alpha_pp$.  

Now define a bijection
\[\xi:P_{\geq 2}\rightarrow Q_{\geq 2}\]
such that $\xi(p)=\xi_i(p)$ for every $p\in P_i$.  We claim that this is an isomorphism of posets.  To prove this, we must show that for any $r\in P_{\geq 2},$ we have $r\in S(p)$ if and only if $\xi(r)\in S(\xi(p))$.

Let $p\in P_i$ for $i>2$.  We know that
\[\kappa_p=\text{span}\left(\left\{\sum_{r\in S(p)}r\right\}\cup\{r:r\notin S(p)\}\right),\]
and that $|S(p)|>1$.  It follows that for $r\in P_{i-1}$ we have $r\in S(p)$ if and only if $r\notin \kappa_p$.  A similar argument shows that $\xi(r)\in S(\xi(p))$ if and only if $\xi(r)\notin \kappa_{\xi(p)}$.  This gives us
\[\big(r\in S(p)\big) \Leftrightarrow \big(r\notin\kappa_p\big) \Leftrightarrow \big(\alpha_rr\notin\kappa_p\big) \Leftrightarrow \big(\xi(r)\notin\kappa_{\xi(p)}\big) \Leftrightarrow \big(\xi(r)\in S(\xi(p))\big)\]
\end{proof}

\begin{cor}
Given any finite atomic lattice $P$ whose Hasse diagram is a uniform layered graph, the poset $P_{\geq 2}$ is determined up to isomorphism by $B(P)$.
\end{cor}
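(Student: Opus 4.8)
The plan is to deduce this corollary directly from Theorem~\ref{NonNesting}: once $P$ is a finite atomic lattice whose Hasse diagram is a uniform layered graph, I only need to verify that $P$ meets the hypotheses of that theorem, after which the conclusion $P_{\ge2}\cong Q_{\ge2}$ for any $Q\sim_B P$ is immediate. The hypotheses to check are that $P$ is a finite poset with a unique minimal vertex (here $*=\hat{0}$, and finiteness and uniformity of the Hasse diagram are given), that $P$ has the non-nesting property, and that $|S(p)|>1$ whenever $|p|>1$. So the work reduces to establishing those last two facts about finite atomic lattices.

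For the non-nesting property I would argue as follows. Take distinct $p,q$ with $|S(p)|>1$ and $|S(q)|>1$, and suppose toward a contradiction that $S(p)\subseteq S(q)$. Pick two distinct elements $x,y\in S(p)$; these lie in $S(q)$ as well. Since $|x|=|y|=|p|-1$ and $x\neq y$, neither is below the other, so $x<x\vee y$; moreover $x\vee y\le p$ and $x\vee y\le q$. Because $x\lessdot p$ and $x\lessdot q$, there is nothing strictly between $x$ and $p$ or between $x$ and $q$, forcing $x\vee y=p$ and $x\vee y=q$, hence $p=q$, a contradiction. (Only $|S(p)|>1$ is used, so in fact every finite lattice is non-nesting; atomicity is not needed for this part.)

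For the out-degree condition, suppose $|p|>1$ but $|S(p)|=1$, say $S(p)=\{x\}$. Then any $z<p$ satisfies $z\le x$: a saturated chain from $z$ to $p$ has penultimate element covered by $p$, hence equal to $x$. In particular every atom below $p$ lies below $x$, so since $P$ is atomic, $p$ equals the join of the atoms below it, giving $p\le x<p$, a contradiction; thus $|S(p)|>1$ whenever $|p|>1$. Combining the two facts, $P$ satisfies all hypotheses of Theorem~\ref{NonNesting}, which yields $P_{\ge2}\cong Q_{\ge2}$ for every $Q$ with $Q\sim_B P$.

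There is essentially no serious obstacle here; the only point requiring a little care is the interpretation of ``atomic'' as ``every element is the join of the atoms below it'' (the atomistic condition), since that is exactly what makes the out-degree argument go through — a weaker reading would not suffice. I would state this convention explicitly at the start of the proof. Everything else is a short application of lattice-theoretic covering arguments together with the already-proved Theorem~\ref{NonNesting}.
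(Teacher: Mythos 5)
Your proposal is correct and follows the same route as the paper: reduce to Theorem~\ref{NonNesting} and verify that a finite atomic lattice with uniform Hasse diagram satisfies its hypotheses (unique minimal element, non-nesting, and $|S(p)|>1$ for $|p|>1$). Your verification of non-nesting via $x\vee y=p$ and $x\vee y=q$ for two common covered elements is a slightly more economical variant of the paper's argument (which uses $p=\bigvee S(p)$ and hence atomicity), and you supply the proof of the out-degree condition that the paper only asserts; both are fine.
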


\begin{proof}
Since $P$ is a finite lattice, it has a unique minimal element $\hat{0}$.  Since $P$ is atomic, any element $p$ of rank two or greater satisfies $|S(p)|>1$.  All that remains is to show that $P$ satisfies the non-nesting property.

Suppose $p$ and $q$ are elements of $P$ with $S(p)\subseteq S(q)$.  Then either $p$ and $q$ are both atoms and $S(p)=S(q)=\hat{0}$, or we have $\bigvee S(p)=p$ and $\bigvee S(q)=q$.  This means that
\[p\vee q=\left(\bigvee S(p)\right)\vee\left(\bigvee S(q)\right)=\bigvee S(q)=q,\]
and so we have $q\geq p$.  Since $S(p)=S(q)$, we know that $|p|=|q|$, and so it follows that $q=p$.
\end{proof}

\subsection{The Boolean Algebra}

The Boolean algebra satisfies the non-nesting property.  This allows us to show that it is uniquely identified by its algebra $B(\Gamma)$.

\begin{prop}
\label{BooleanAlgebra}
Let $2^{[n]}$ be the Boolean lattice, and let $\Gamma$ be a layered graph with $\Gamma\sim_B 2^{[n]}$.  Then $\Gamma$ and $2^{[n]}$ are isomorphic as layered graphs.
\end{prop}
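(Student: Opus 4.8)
The plan is to apply Theorem~\ref{NonNesting} to reduce the problem to recovering the bottom two levels. First I would check that $2^{[n]}$ satisfies the hypotheses of Theorem~\ref{NonNesting}: it is a finite atomic lattice, so by the corollary following that theorem (or directly) it has the non-nesting property, and every element of rank at least $2$ covers at least two elements, so $|S(p)| > 1$ whenever $|p| > 1$. I would also need to observe that $2^{[n]}$ is uniform, so that the machinery of $B(\Gamma)$ as a doubly-graded invariant applies and $\Gamma$ is forced to be uniform as well (since $\Gamma \sim_B 2^{[n]}$ and uniformity is detected by $gr A$; more simply, the argument only uses $B$, and Theorem~\ref{NonNesting} is stated for posets, so I would first note that $\Gamma$, having the same doubly-graded $B$ as a poset, must itself be the Hasse diagram of a ranked poset $Q$). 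Theorem~\ref{NonNesting} then gives $2^{[n]}_{\geq 2} \cong Q_{\geq 2}$.

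Next I would promote this isomorphism of the truncated poset to an isomorphism of the full layered graph. The truncation $2^{[n]}_{\geq 2}$ already contains $V_2, V_3, \ldots, V_n$, so in particular $|Q_i| = \binom{n}{i}$ for all $i \geq 2$, and the covering relations among these levels agree with those of $2^{[n]}$. It remains to recover $V_0$ and $V_1$ together with the edges into them. Since $\Gamma \sim_B 2^{[n]}$ preserves the double grading, we get $|Q_1| = \dim B(Q)_{1,1} = n$ and $|Q_0| = 1$ (the unique minimal vertex, which exists because $B(\Gamma)$ is built on $V_+$ and the construction presupposes a unique minimal vertex; alternatively $|Q_0| = k_{\emptyset_1}$ is recovered from $\dim \kappa$ of the level-$1$ vertices). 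So the vertex counts in every level match $2^{[n]}$.

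The remaining work is to pin down $S(q)$ for $q \in Q_2$ and for $q \in Q_1$. For $q \in Q_2$: we already know from the isomorphism $2^{[n]}_{\geq 2} \cong Q_{\geq 2}$ how rank-$2$ elements sit relative to rank-$3$ elements, but $S(q) \subseteq Q_1$ is \emph{not} part of the truncated poset, so I must extract it from $\kappa_q \subseteq B_1$. Here I would use Proposition~\ref{OutDegree} and the structure of $\kappa$: each rank-$2$ element of $2^{[n]}$ covers exactly two atoms, so each $q \in Q_2$ has $|S(q)| = 2$, and $\kappa_q$ is a hyperplane in $B_1$ determined by $S(q)$. The collection $\{S(q) : q \in Q_2\}$ must be exactly the collection of $2$-element subsets of $Q_1$ (there are $\binom{n}{2}$ of each, and the incidence with $Q_3$ forces a consistent labelling via the lattice structure), giving a bijection $Q_1 \leftrightarrow [n]$ under which $q \in Q_2$ corresponds to its pair. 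For $q \in Q_1$: every atom covers the unique minimum, so $S(q) = Q_0$ automatically. Finally I would check functoriality of these identifications against the already-established isomorphism on $Q_{\geq 2}$ — i.e.\ that an element $p \in 2^{[n]}$ of rank $\geq 2$ covers $q \in 2^{[n]}_1$ iff the matched element $\xi(p) \in Q$ covers the matched atom — which again follows from $\kappa_p$ being determined by $S(p)$ together with the rank-$2$ identification, since $S(p) \cap$ (rank $1$) is recovered as $\{$atoms below some rank-$2$ element below $p\}$, a purely order-theoretic condition now available in $Q_{\geq 2}$ plus the $Q_2 \leftrightarrow$ pairs data.

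The main obstacle I anticipate is the last step: bridging from the abstract truncated isomorphism to a coherent global labelling of the atoms. Theorem~\ref{NonNesting} only recovers $P_{\geq 2}$ as an abstract poset, so I have to argue that the way rank-$2$ vertices attach to atoms in $Q$ is forced — not just that each $q \in Q_2$ has a $2$-element $S(q)$, but that these $2$-sets are compatible with the rank-$3$ incidences in the unique way realized by $2^{[n]}$. This amounts to showing the "line graph"-type data $\{S(q) : q \in Q_2\}$ together with the lattice structure on $Q_{\geq 2}$ rigidly determines the bijection $Q_1 \cong [n]$; I would handle it by using that in $2^{[n]}$ an atom $\{i\}$ is recovered from $Q_{\geq 2}$ as the common "meet direction" of all rank-$2$ elements lying below a given rank-$3$ element but the argument needs the atom-level $\kappa$ data, so I would run it entirely inside $B(\Gamma)$ via $\kappa_q$ for $q \in Q_2$ and a dimension count on $B_1$.
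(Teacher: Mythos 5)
Your overall strategy matches the paper's: invoke Theorem~\ref{NonNesting} (via the atomic-lattice corollary) to get $2^{[n]}_{\geq 2}\cong \Gamma_{\geq 2}$, then extend the isomorphism down to the atoms using the $\kappa$-data in $B_1$. You also correctly establish the preliminary facts (each $v\in V_2$ has $|S(v)|=2$, distinct vertices of $V_2$ have distinct $S(v)$, so $\{S(v):v\in V_2\}$ is exactly the set of $2$-element subsets of $V_1$). But there is a genuine gap at the decisive step, and you have located it yourself without closing it: the parenthetical ``the incidence with $Q_3$ forces a consistent labelling via the lattice structure'' is precisely the claim that needs proof, and your proposed substitute (``run it entirely inside $B(\Gamma)$ via $\kappa_q$ for $q\in Q_2$ and a dimension count on $B_1$'') is a plan, not an argument. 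The paper closes this gap by an intrinsic algebraic characterization of the sets $\{v\in V_2 : v\ngtrdot w\}$ for $w\in V_1$: these are exactly the subsets $A\subseteq V_2$ with $|A|=\binom{n-1}{2}$ and $k_A>1$ (its Claim~2). Proving that characterization is the real work — one associates to $A$ the graph $G_A$ on $V_1$ with edge set $\{S(v):v\in A\}$ and uses the convexity inequality $\binom{i}{2}+\binom{j}{2}\leq\binom{i+j-1}{2}$ (equality iff $j=1$) to show $k_A\leq 2$, with $k_A=2$ forcing $G_A$ to be an isolated vertex plus a complete graph. Then the atom $w_i$ is defined as the isolated vertex of $G_{A_i}$ where $A_i=\xi\bigl(\binom{[n]\setminus\{i\}}{2}\bigr)$, using that $\xi$ preserves $\kappa$ of \emph{subsets} of $P_2$ (because each $\xi(p)$ is a scalar multiple of $p$), not merely of single vertices. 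Nothing equivalent to this appears in your sketch.

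The alternative route you gesture at — recovering atom $i$ as the common element of $S(\xi(\{i,j\}))\cap S(\xi(\{i,k\}))$, using that rank-$2$ elements with a common rank-$3$ cover must have intersecting $S$-sets (which does follow from Proposition~\ref{IntersectionSize} and $\kappa$-preservation) — could in principle be made to work, but it requires its own nontrivial argument that you omit: well-definedness of the ``common atom'' as $j,k$ vary, which is a Whitney-type line-graph rigidity statement. That statement has genuine small-$n$ subtleties (for instance, not every automorphism of $L(K_4)$ is induced by an automorphism of $K_4$, so adjacency data alone does not suffice at $n=4$), so you would need to bring in more than the pairwise-intersection pattern. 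As written, the proposal identifies the correct reduction and the correct obstacle but does not supply the combinatorial lemma that resolves it.
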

\begin{proof}
Let $2^{[n]}$ be the Boolean lattice, and let $\Gamma=(V_0\cup\ldots\cup V_n,E)$ be a uniform layered graph with unique minimal vertex such that there exists an isomorphism of doubly-graded algebras from $B\left(2^{[n]}\right)$ to $B(\Gamma)$.  As in the proof of Theorem~\ref{NonNesting}, we equate the vertices in $B\left(2^{[n]}\right)$ with their images in $B(\Gamma)$.  Again following the proof of Theorem~\ref{NonNesting}, we define $\xi_i$ for each $i>2$. Since $2^{[n]}$ is a finite atomic lattice, the map
\[\xi:\left(2^{[n]}\right)\\_{\geq2}\rightarrow\Gamma_{\geq2}\]
given by $\xi(p)=\xi_i(p)$ for $p\in \binom{[n]}{i}$ is an isomorphism of posets.  We would like to define an extension $\xi^\prime$ of $\xi$, such that $\xi^\prime$ is an isomorphism from $2^{[n]}$ to $\Gamma$.

We will use the following in our construction:

\textit{Claim 1:}  For every $w\in V_1$, we have $|\{v\in V_2 : v\ngtrdot w\}|=\binom{n-1}{2}$.

\textit{Claim 2:} For every $A\subseteq V_2$ with $|A|=\binom{n-1}{2}$, we have $\dim(\kappa_A)>1$ if and only if there exists $w\in V_1$ with $A=\{v\in V_2 : v\ngtrdot w\}$.

We will begin by proving Claim 1.  We know that $\xi$ satisfies $\kappa_{\xi(p)}=\kappa_p$ for any $p\in \binom{[n]}{2}$.  Since $|S(p)|=2$ for every $p\in\binom{[n]}{2}$, Proposition~\ref{OutDegree} gives us
\[\dim(\kappa_p)=n-1\]
for all $p\in\binom{[n]}{2}$, and thus
\[\dim(\kappa_v)=n-1\]
for all $v\in V_2$.  This tells us that $|S(v)|=2$ for all $v\in V_2$.

Furthermore, for any $p\neq q$ in $\binom{[n]}{2}$, we have $\kappa_p\neq\kappa_q$.  Since $\xi$ is a bijection, this tells us that $\kappa_v\neq \kappa_w$ for any $v\neq w$ in $V_2$.  Thus $S(v)\neq S(w)$ whenever $v\neq w$ in $V_2$.  We have
\[|V_1|=\dim(B_1)=n,\]
and
\[|V_2|=\dim(B_2)=\binom{n}{2},\]
so each possible pair of vertices in $V_1$ appears exactly once as $S(v)$ for some $v\in V_1$.  It follows that for any $w\in V_1$, $|\{v\in V_2 : v\ngtrdot w\}|$ is exactly the number of pairs of vertices in $V_1$ that exclude $w$.  There are exactly $\binom{n-1}{2}$ such pairs.  This proves Claim 1.

To prove Claim 2, we begin by associating to each subset $A\subseteq V_2$ a graph $G_A$ with vertex set $V_1$, and with edge set
\[\{S(v): v\in A\}.\]
$G_A$ is a graph with $|V_1|$ vertices, $|A|$ edges, and $k_A$ connected components.  If the connected components have sizes $i_1, i_2,\ldots, i_{k_A}$, then we have
\[|A|\leq \binom{i_1}{2}+\binom{i_2}{2}+\ldots+\binom{i_{k_A}}{2}\]
Assume that $|A|=\binom{n-1}{2}$. Then
\[\binom{n-1}{2}\leq \binom{i_1}{2}+\binom{i_2}{2}+\ldots+\binom{i_{k_A}}{2}.\]

Notice that for $i\geq j\geq 1$, we have
\begin{eqnarray*}
\binom{i}{2}+\binom{j}{2}&=&\frac{i(i-1)+j(j-1)}{2}\\
&\leq& \frac{i(i-1)+i(j-1)}{2}\\
&\leq& \frac{i(i-1)+(j-1)(j-2)+i(j-1)}{2}\\
&\leq& \binom{i}{2}+\binom{j-1}{2}+i(j-1)\\
&=&\binom{i+j-1}{2},
\end{eqnarray*}
with equality if and only if $j=1$.  By induction,
\[\binom{i_1}{2}+\binom{i_2}{2}+\ldots+\binom{i_{k_A}}{2}\leq\binom{n-k_A+1}{2},\]
with equality if and only if all but one of $i_1,\ldots,i_k$ is equal to 1.

 This gives us
\[|A|=\binom{n-1}{2}\leq\binom{n+1-k_A}{2},\]
which implies that $k_A\leq 2$.  Thus either $\dim(\kappa_A)=1$, or $\dim(\kappa_A)=2$, in which case $k_A=2$ and so we have
\begin{eqnarray*}
\binom{n-1}{2}&\leq&\binom{i}{2}+\binom{j}{2}\\
&\leq&\binom{i+j-1}{2}\\
&=&\binom{n-1}{2},
\end{eqnarray*}
where $i$ and $j$ are the sizes of the two connected components of $G_A$, with $i\geq j$.  The inequality in the second line is an equality if and only if $j=1.$  Thus when $|A|=\binom{n-1}{2}$ and $k_A>1$, $G_A$ consists of one isolated vertex $w$, together with the complete graph on $V_1\setminus\{w\}$.  So in this case, $A$ is exactly the collection of vertices $\{v\in V_2 : v\ngtrdot w\}$, which proves Claim 2.

Now we are prepared to prove the theorem.  For each $i\in[n]$, let $\mathscr{A}_i=\binom{[n]\setminus\{i\}}{2}$, and let $A_i=\xi(\mathscr{A}_i)$.  Since $\kappa_{A_i}=\kappa_{\mathscr{A}_i}$, we have $k_{A_i}=k_{\mathscr{A}_i}=2$.  Since $|A_i|=|\mathscr{A}_i|=\binom{n-1}{2}$, Claim 2 tells us that for each $i\in [n]$,  there exists a unique $w_i\in V_1$ such that $A_i~=~\{v\in~V_2~:~v\ngtrdot~w_i\}$.  We define our function $\xi^\prime:2^{[n]}\rightarrow\Gamma$ so that
\[\xi^\prime(p)=\left\{
\begin{array}{ll}
*&\text{if }p=\emptyset\\
w_i&\text{if }p=\{i\}\\
\xi(p)&\text{else}
\end{array}
\right.\]
We wish to show that this function is an isomorphism of posets. 

 First, notice that for $i\neq j$, we have $\kappa_{\mathscr{A}_i}\neq \kappa_{\mathscr{A}_j}$.  This implies that $A_i\neq A_j$, and so $w_i\neq w_j$.  Thus $\xi^\prime$ is a bijection.

To show that $\xi^{\prime}$ preserves order, we must show that for every $i\in [n]$ and $p\in\binom{[n]}{2}$, we have $w_i\lessdot\xi^\prime(p)$ if and only if $i\in p$.  If $i\in p$, then $\xi^\prime(p)\notin A_i$.  Since $A_i=\{v\in V_2 : v\ngtrdot w_i\}$, it follows that $w_i\lessdot\xi^\prime(p)$.  Conversely, if $i\notin p$, then $\xi^\prime(p)\in A_i$, and so $w_i\nlessdot \xi^\prime(p)$.  It follows that $\xi^\prime$ is an isomorphism of partially ordered sets, and so $\Gamma\cong 2^{[n]}$.
\end{proof}

\begin{cor}
Let $2^{[n]}$ be the Boolean lattice, and let $\Gamma$ be a layered graph with $\Gamma\sim_A 2^{[n]}$.  Then $\Gamma$ and $2^{[n]}$ are isomorphic as layered graphs.
\end{cor}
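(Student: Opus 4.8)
The plan is to reduce this corollary to Proposition~\ref{BooleanAlgebra} together with the general implication that $\sim_A$ refines $\sim_B$ once the associated graded algebra is quadratic. The whole argument should be a short chain of results already established; essentially the only thing to verify is that the quadraticity hypothesis is available, which amounts to checking that $gr A(2^{[n]})$ is quadratic and that a graph $\Gamma$ with $\Gamma\sim_A 2^{[n]}$ inherits enough structure (uniformity, unique minimal vertex) for the $B(\Gamma)$ machinery to apply.

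First I would record that the Boolean lattice $2^{[n]}$, regarded as a layered graph through its Hasse diagram, is uniform: given $p$ with $|p|\geq 2$ and two subsets $x,x^\prime\lessdot p$, one passes from $x$ to $x^\prime$ by a down-up sequence running through the $(|p|-1)$-element subsets of $p$, so every element of $S(p)$ lies in a single $\sim_p$-class. (This is the same observation used implicitly for atomic lattices after Theorem~\ref{NonNesting}.) By Shelton's criterion \cite{Shelton}, $gr A(2^{[n]})$ is therefore quadratic.

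Next, assume $\Gamma\sim_A 2^{[n]}$, witnessed by an isomorphism $A(\Gamma)\to A(2^{[n]})$ preserving the degree grading and the vertex filtration. A filtration-preserving algebra isomorphism descends to an isomorphism of associated graded algebras $gr A(\Gamma)\cong gr A(2^{[n]})$, so $gr A(\Gamma)$ is quadratic as well, and hence $\Gamma$ is uniform by Shelton's criterion. The proposition asserting that $gr A(\Gamma)$ quadratic forces $(\Gamma\sim_A\Gamma^\prime)\Rightarrow(\Gamma\sim_B\Gamma^\prime)$ then applies with $\Gamma^\prime=2^{[n]}$, giving $\Gamma\sim_B 2^{[n]}$. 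Finally, Proposition~\ref{BooleanAlgebra} yields that $\Gamma$ and $2^{[n]}$ are isomorphic as layered graphs, which is exactly the desired conclusion.

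The only real obstacle here is the bookkeeping around hypotheses: confirming that the Boolean lattice itself meets the uniformity condition needed to invoke the quadratic-dual description of $B$, and that $\Gamma\sim_A 2^{[n]}$ genuinely forces $\Gamma$ to be uniform (so that the passage $\sim_A\Rightarrow\sim_B$ is legitimate). Once those checks are in place, the corollary follows by concatenating Shelton's criterion, the $\sim_A$-to-$\sim_B$ refinement, and Proposition~\ref{BooleanAlgebra}, with no further computation required.
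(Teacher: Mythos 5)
Your proposal is correct and follows essentially the same route as the paper: deduce quadraticity of the associated graded algebras from $\Gamma\sim_A 2^{[n]}$, invoke the implication $(\Gamma\sim_A\Gamma^\prime)\Rightarrow(\Gamma\sim_B\Gamma^\prime)$ for quadratic $gr A(\Gamma)$, and conclude via Proposition~\ref{BooleanAlgebra}. The extra care you take in verifying uniformity of $2^{[n]}$ and of $\Gamma$ (via Shelton's criterion and the descent of a filtration-preserving isomorphism to associated graded algebras) is a welcome filling-in of details the paper leaves implicit, but it is not a different argument.
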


\begin{proof}
Since $\Gamma\sim_A2^{[n]}$, we know that both $gr A(\Gamma)$ and $gr A(2^{[n]})$ are quadratic, and that their quadratic duals $B(\Gamma)$ and $B(2^{[n]})$ are equal.  This gives us $\Gamma\sim_B 2^{[n]},$ and the result follows.
\end{proof}

\subsection{Subspaces of a Finite-Dimensional Vector Space over $\mathbb{F}_q$}
\begin{prop}
Let $X$ be an $n$-dimensional vector space over $\mathbb{F}_q$, the finite field of order $q$, and let $P_X$ be the set of subspaces of $X$, partially ordered by inclusion.  Let $\Gamma$ be a layered graph with $\Gamma\sim_B P_X$.  Then $\Gamma$ and $P_X$ are isomorphic as layered graphs.
\end{prop}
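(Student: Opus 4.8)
The plan is to follow the template of the proof of Proposition~\ref{BooleanAlgebra} for the Boolean lattice, but to replace the ``complete graph'' counting used there by an argument reconstructing the line--plane incidence of $\mathbb{F}_q^n$ from the $\kappa$-data that $B(\Gamma)$ supplies on small sets of planes. We may take $\Gamma$ to be a uniform layered graph with a unique minimal vertex $*$ together with an isomorphism of doubly-graded algebras $B(P_X)\rightarrow B(\Gamma)$, which we use to identify these algebras; the cases $n\leq 2$ being routine, assume $n\geq 3$. The poset $P_X$ is finite, has $|S(p)|>1$ whenever $|p|>1$, and satisfies the non-nesting property: if $p\neq q$ with $S(p)\subseteq S(q)$ and $|p|,|q|\geq 2$, then $p=\bigvee S(p)\leq\bigvee S(q)=q$, and the ranks agree, so $p=q$. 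Hence Theorem~\ref{NonNesting} applies, and, following the construction in its proof, we obtain a poset isomorphism $\xi:(P_X)_{\geq 2}\rightarrow\Gamma_{\geq 2}$ together with, for each $i\geq 2$, a bijection $\xi|_{V_i}$ satisfying $\kappa_{\xi(\mathcal A)}=\kappa_{\mathcal A}$ for every $\mathcal A\subseteq V_i(P_X)$. In particular $\Gamma$ is known above level~$1$; since $\dim\kappa_a=k_a$ (Corollary~\ref{SizeOfS}) we get $|V_1(\Gamma)|=\dim B_1=[n]_q$, and the single-vertex identity $|S(v)|=\dim B_1-\dim\kappa_v+1$ gives $|S(v)|=q+1$ for all $v\in V_2(\Gamma)$. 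As $*$ is the only vertex of $V_0(\Gamma)$ and every vertex of $V_1(\Gamma)$ covers it, the problem reduces to recovering the covering relation between $V_1(\Gamma)$ and $V_2(\Gamma)$.

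Two facts about $\Gamma$ are forced by the data. First, distinct planes of $\Gamma$ share at most one vertex of $V_1(\Gamma)$: for a pair $v\neq v'$ one has $k_{\{v,v'\}}=[n]_q-2q$ (the common value over all pairs in $P_X$, transported by $\xi$), and counting the $\sim_{\{v,v'\}}$-classes of $V_1(\Gamma)$ forces $|S(v)\cap S(v')|\leq 1$. Second, each vertex-pair of $V_1(\Gamma)$ thus lies in at most one plane, while $\sum_{v\in V_2(\Gamma)}\binom{|S(v)|}{2}=\binom{q+1}{2}\binom{n}{2}_q=\binom{[n]_q}{2}$, so every pair of vertices of $V_1(\Gamma)$ lies in exactly one plane. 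Say distinct planes $v,v'$ \emph{meet in a point} if $|S(v)\cap S(v')|=1$. A direct computation in $\mathbb{F}_q^n$, transported through $\xi$, shows that a triple of distinct planes has $k_{\{v,v',v''\}}=[n]_q-3q+1$ exactly when it is a ``triangle'' --- each pair meets in a point, the three points distinct --- and $k_{\{v,v',v''\}}=[n]_q-3q$ otherwise. Using the linear-space property one checks that $v,v'$ meet in a point iff some $v''$ completes a triangle, and that a triple of pairwise-meeting planes shares a common point iff its $k$ equals $[n]_q-3q$ (``concurrent''). Hence from $B(\Gamma)$ we recover, on $V_2(\Gamma)$, both the meet-in-a-point relation and the concurrency relation, and $\xi$ matches these with the corresponding relations in $P_X$.

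Call $\mathcal P\subseteq V_2(\Gamma)$ a \emph{pencil} if it is maximal among subsets whose members pairwise meet in points and all of whose triples are concurrent. Arguing as in $\mathbb{F}_q^n$: any three members share a common point $w\in V_1(\Gamma)$; concurrency forces every further member to contain $w$; and $\{v\in V_2(\Gamma):v\gtrdot w\}$ is again such a clique, so $\mathcal P=\{v\in V_2(\Gamma):v\gtrdot w\}$ for a unique $w\in V_1(\Gamma)$. Since $\xi$ preserves the relations defining pencils, it carries the pencils of $P_X$ --- the sets $\mathcal P_\ell$ of planes through a line $\ell$, one for each of the $[n]_q$ lines of $X$ --- bijectively onto the pencils of $\Gamma$; distinct pencils yield distinct vertices $w$, and there are $[n]_q=|V_1(\Gamma)|$ of them, so every $w\in V_1(\Gamma)$ is the vertex of a unique pencil, which is $\{v:v\gtrdot w\}$. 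Define $\xi':P_X\rightarrow\Gamma$ by $\xi'(\hat 0)=*$, $\xi'|_{V_i}=\xi$ for $i\geq 2$, and $\xi'(\ell)$ equal to the vertex of the pencil $\xi(\mathcal P_\ell)$ for each line $\ell$. This is a bijection, and covering relations are verified level by level: above level~$2$ because $\xi$ is a poset isomorphism; $\xi'(\ell)\gtrdot *$ always; and for a line $\ell$ and plane $\pi$, $\ell\leq\pi\iff\pi\in\mathcal P_\ell\iff\xi(\pi)\in\xi(\mathcal P_\ell)=\{v:v\gtrdot\xi'(\ell)\}\iff\xi'(\pi)\gtrdot\xi'(\ell)$. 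Thus $\xi'$ is an isomorphism of layered graphs, so $\Gamma\cong P_X$.

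The main obstacle is the classification underlying the second paragraph: computing, in $\mathbb{F}_q^n$, the component counts $k$ for all configurations of two and three planes and checking that these integers isolate ``triangle'' (and, among pairwise-meeting triples, ``concurrent''). This is a finite but delicate case analysis, and one must attend to low-dimensional degeneracies --- for instance every two distinct planes of $\mathbb{F}_q^3$ already meet in a point --- as well as confirm that no other maximal pairwise-meeting clique, in particular the family of all planes inside a fixed $3$-dimensional subspace (which contains triangles), is a pencil. Unlike the Boolean case, the crude convexity bound is unavailable, since the bipartite incidence between $V_1(\Gamma)$ and $V_2(\Gamma)$ is a priori far from complete; the reconstruction relies entirely on $\xi$ transporting the $\kappa$-data on triples of planes.
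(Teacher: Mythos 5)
Your proposal reaches the correct conclusion and begins exactly as the paper does---reduce to levels $1$ and $2$ via Theorem~\ref{NonNesting}, then use the transported $\kappa$-data to show every $v\in V_2(\Gamma)$ satisfies $|S(v)|=q+1$ and distinct planes of $\Gamma$ share at most one point---but your mechanism for recovering the level-$1$/level-$2$ incidence is genuinely different. The paper identifies each line $l$ through the \emph{complementary} set of planes not containing $l$: it computes the exact cardinality $N=\frac{(q^n-q^2)(q^{n-1}-1)}{(q-1)(q^2-1)}$ of such a set, observes that among subsets $A\subseteq V_2$ with $|A|=N$ the condition $k_A>1$ singles out exactly the sets $\{v\in V_2: v\ngtrdot w\}$, and finishes with a global count (there are exactly $|V_1|$ qualifying subsets on each side). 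You instead reconstruct the incidence positively from $k$-values of pairs and triples: inclusion-exclusion does give $k=|V_1|-3q+1$ for triangles and $|V_1|-3q$ in every other configuration of three distinct planes, so triangles, hence the meeting relation, hence concurrency, are all detectable, and each $w\in V_1(\Gamma)$ is recovered as the common point of a maximal concurrent clique. Two caveats: the triple classification must be carried out \emph{inside} $\Gamma$ using the already-established facts ($|S(v)|=q+1$, pairwise intersections of size at most one, every pair of points on exactly one plane), not merely ``transported'' from $P_X$---your appeal to the linear-space property shows you intend this; and the pencil argument tacitly assumes every maximal clique has at least three members, which follows because each point of $V_1(\Gamma)$ lies on $\frac{q^{n-1}-1}{q-1}\geq 3$ planes once the linear-space property is in hand. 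The worry you raise about the planes inside a fixed $3$-dimensional subspace evaporates since that family contains triangles and so fails your all-triples-concurrent condition. On balance, your route buys a structural reconstruction of the whole projective incidence geometry that does not hinge on the exact cardinality $N$, at the cost of a longer case analysis; the paper's route buys brevity by replacing the triple analysis with a single cardinality-plus-$k_A>1$ characterization.
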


\begin{proof}
The case where $n\leq 2$ is trivial, so we will assume that $n\geq 3$.  We will use $P_i$ to denote the collection of $i$-dimensional subspaces of $X$.  Let $\Gamma=(V_0\cup V_1\cup\ldots\cup V_n,E)$ be a uniform layered graph with unique minimal vertex such that there exists a doubly graded algebra isomorphism from $B(P_X)$ to $B(\Gamma)$.  Once again we will equate the vertices in $B(P_X)$ with their images in $B(\Gamma)$, and we will define $\xi_i$ as in the proof of Theorem~\ref{NonNesting}.  Again our poset $P_X$ is a finite atomic lattice, so the map
\[\xi:(P_X)_{\geq2}\rightarrow\Gamma_{\geq2}\]
is an isomorphism of posets.  We wish to define an extension $\xi^\prime$ of $\xi$ that is an isomorphism from our whole poset $P_X$ to $\Gamma$.  Our proof will mirror that of Proposition~\ref{BooleanAlgebra}, and we will make use of the following two facts:

\textit{Claim 1:} For every $w\in V_1$, we have \[|\{v\in V_2: v\ngtrdot w\}|=\frac{(q^n-q^2)(q^{n-1}-1)}{(q-1)(q^2-1)}.\]

\textit{Claim 2:} For every $A\in V_2$ with \[|A|=\frac{(q^n-q^2)(q^{n-1}-1)}{(q-1)(q^2-1)},\] we have $k_A>1$ if and only if there exists $w\in V_1$ with $A=\{v\in V_2: v\ngtrdot w\}$.

We begin by proving Claim 1.  We know that $P_2$ consists of $\frac{(q^n-1)(q^{n-1}-1)}{(q-1)(q^2-1)}$ planes.  It follows that $V_2$ consists of $\frac{(q^n-1)(q^{n-1}-1)}{(q-1)(q^2-1)}$ vertices.  If we wish to show that
\[|\{v\in V_2: v\ngtrdot w\}|=\frac{(q^n-q^2)(q^{n-1}-1)}{(q-1)(q^2-1)},\]
it will suffice to show that
\[|\{v\in V_2 : v\gtrdot w\}| = \frac{(q^n-1)(q^{n-1}-1)}{(q-1)(q^2-1)}-\frac{(q^n-q^2)(q^{n-1}-1)}{(q-1)(q^2-1)}=\frac{(q^{n-1}-1)}{(q-1)}\]

Let $v,v^\prime\in V_2$ with $v\neq v^\prime$.  Since $\xi$ is a bijection, we have $\xi\inv(v)\neq\xi\inv(v^\prime)$.  Since two planes intersect in a unique line, we have
\[|S(\xi\inv(v))\cap S(\xi\inv(v^\prime))|=1.\]
By the argument given in the proof of Proposition~\ref{IntersectionSize}, this means that
\[\dim(B_n)-k_{\{\xi\inv(v)\}}-k_{\{\xi\inv(v^\prime)\}}+k_{\{\xi\inv(v),\xi\inv(v^\prime)\}}+1=1\]
Since $k_A=k_{\xi(A)}$ for any $A\subseteq P_2$, this gives us
\[\dim(B_n)-k_{\{v\}}-k_{\{v^\prime\}}+k_{\{v,v^\prime\}}+1=1,\]
and another application of Proposition~\ref{IntersectionSize} gives us
\[|S(v)\cap S(v^\prime)|\leq 1\]
for any $v\neq v^\prime$.  Each plane in $P_2$ contains $q+1$ lines.  It follows that each vertex $v\in V_2$ satisfies $|S(v)|=q+1$.

  Suppose we have $w\in V_1$ such that
\[|\{v\in V_2 : v\gtrdot w\}|>\frac{(q^{n-1}-1)}{(q-1)}.\]
Then there must exist at least
\[q\left(\frac{q^{n-1}-1}{q-1}+1\right)+1=\frac{q^n+q^2-q-1}{q-1}\]
distinct vertices in $V_1$.  However, we know that
\[|V_1|=\frac{q^n-1}{q-1}<\frac{q^n+q^2-q-1}{q-1}\]
for $n>2$, so our initial assumption that there exists $w$ satisfying
\[|\{v\in V_2 : v\gtrdot w\}|>\frac{(q^{n-1}-1)}{(q-1)}\]
must be false.  It follows that
\[|\{v\in V_2 : v\gtrdot w\}|\leq\frac{(q^{n-1}-1)}{(q-1)}\]
for all $w\in V_1$.  Since each plane contains $q+1$ lines, we have
\begin{eqnarray*}
\sum_{w\in V_1}|\{v\in V_2 : v\gtrdot w\}|&=&(q+1)|V_2|\\
&=&(q+1)\left(\frac{(q^n-1)(q^{n-1}-1)}{(q-1)(q^2-1)}\right)\\
&=&\left(\frac{(q^n-1)}{(q-1)}\right)\left(\frac{(q^{n-1}-1)}{(q-1)}\right)\\
&=&|V_1|\left(\frac{(q^{n-1}-1)}{(q-1)}\right)
\end{eqnarray*}
It follows that
\[|\{v\in V_2 : v\gtrdot w\}|=\frac{(q^{n-1}-1)}{(q-1)}\]
for each $w\in V_1$, proving Claim 1.

We know that any two planes in $P_2$ intersect in exactly one line.  Thus for any $\mathscr{A}\subseteq P_2$, we have
\[\kappa_\mathscr{A}=\text{span}\left(\left\{\sum_{l\in S(\mathscr{A})}l\right\}\cup \{w\in V_1 : w\notin S(\mathscr{A})\}\right),\]
and thus $\dim(\kappa_\mathscr{A})=|P_1|-|S(\mathscr{A})|+1$ for all $\mathscr{A}\subseteq P_2$.  In particular, when $|\mathscr{A}|=\frac{(q^n-q^2)(q^{n-1}-1)}{(q-1)(q^2-1)}$, we have $k_\mathscr{A}>1$ if and only if $\mathscr{A}$ is the collection of planes which do not contain a particular line $l$.  There are $\frac{(q^n-1)}{(q-1)}$ such subsets, one for each line.  This means that there are exactly $\frac{(q^n-1)}{(q-1)}$ subsets $A\subseteq V_2$ with $|A|=\frac{(q^n-q^2)(q^{n-1}-1)}{(q-1)(q^2-1)}$ and $k_A>1$.  Since there are $|V_1|=\frac{(q^n-1)}{(q-1)}$ subsets of the form $\{v\in V_1 : v\ngtrdot w\}$ for some $w\in V_1$, this must be the complete list of subsets $A\subseteq V_2$ with $|A|=\frac{(q^n-q^2)(q^{n-1}-1)}{(q-1)(q^2-1)}$ and $k_A>1$.  This proves Claim 2.

Now we can construct our extension $\xi^\prime.$  For each point $p\in P_1,$ define $\mathscr{A}_p$ to be the collection of lines in $P_2$ which do not contain $p$, and let $A_p=\xi(\mathscr{A}_p)$.  Since $\kappa_{A_p}=\kappa_{\mathscr{A}_p}$, we have $k_{A_p}=2$, and thus there exists a unique $w_p\in V_1$ such that $A_p=\{v\in V_2 : v\ngtrdot w_p\}$.  We define our function $\xi^\prime: P_X\rightarrow \Gamma$ as follows:
\[\xi^\prime(x)=\left\{
\begin{array}{ll}
*&\text{if }x=\emptyset\\
w_x&\text{if }x\in P_1\\
\xi(x)&\text{else}
\end{array}
\right.\]

Since $\kappa_{\mathscr{A}_p}\neq\kappa_{\mathscr{A}_q}$ whenever $p\neq q$ in $P_1$, we know that $\xi^\prime(p)\neq\xi^\prime(q)$, and thus $\xi^\prime$ is a bijection.  

All that remains is to show that for $p\in P_1$ and $l\in P_2$, we have $w_p\lessdot \xi^\prime(l)$ if and only if $p\in l$.  If $p\in l$, then $\xi^\prime(l)\notin A_p,$ and since $A_p=\{v\in V_2 : v\ngtrdot w_p\}$, it follows that $w_p\lessdot\xi^\prime(l)$.  Conversely, if $p\notin l$, then $\xi^\prime(l)\in A_p$, and so $w_p\nlessdot \xi^\prime(p)$.  It follows that $\xi^\prime$ is an isomorphism of partially ordered sets, and that $\Gamma\cong P_X$.

\begin{cor}
Let $X$ be an $n$-dimensional vector space over $\mathbb{F}_q$, the finite field of order $q$, and let $P_X$ be the set of subspaces of $X$, partially ordered by inclusion.  Let $\Gamma$ be a layered graph with $\Gamma\sim_A P_X$.  Then $\Gamma$ and $P_X$ are isomorphic as layered graphs.
\end{cor}

\end{proof}


\bibliographystyle{plain}
\bibliography{Invariants10}

\end{document}